\newtheorem{theorem}{Theorem}[section]
\newtheorem{definition}[theorem]{Definition}
\newtheorem{prop}[theorem]{Proposition}
\newtheorem{coroll}[theorem]{Corollary}
\newtheorem{lemma}[theorem]{Lemma}
\newtheorem*{claim}{Claim}
\numberwithin{equation}{section}
\newtheorem*{thm:countsubgroup}{Theorem \ref{thm:countingsubgrps}}
\newtheorem*{thm:limitmeasure}{Theorem \ref{thm:limitmeasure}}
\newtheorem*{thm:pattsulconverge}{Theorem \ref{thm:pattsulconverge}}
\newtheorem*{coroll:systole}{Corollary \ref{coroll:systole}}
\newtheorem*{coroll:toptype}{Corollary \ref{coroll:toptype}}
\newtheorem*{coroll:orthgeo}{Corollary \ref{coroll:orthgeo}}
\newcommand{\N}{\mathbb{N}}
\newcommand{\R}{\mathbb{R}}
\newcommand{\Z}{\mathbb{Z}}
\newcommand{\hyp}{\mathbb{H}^2}
\newcommand{\psl}{\operatorname{PSL}_2(\mathbb{R})}
\newcommand{\reps}{\mathcal{X}_{k}}
\newcommand{\critreps}{\mathcal{X}_{\Sigma,k}}
\newcommand{\hreps}{\hat{\mathcal{X}}_{k}}
\newcommand{\cv}{CV_k}
\newcommand{\out}{\operatorname{Out}(\free)}
\newcommand{\aut}{\operatorname{Aut}(\free)}
\newcommand{\gmoduli}{\mathcal{M}_k}
\newcommand{\free}{F_k}
\newcommand{\fund}{\pi_1(\Sigma)}
\newcommand{\gfund}{\pi_1(X)}
\newcommand{\gaut}{\operatorname{Aut}(X)}
\newcommand{\gtriv}{\operatorname{Triv}(X)}
\newcommand{\typ}{\operatorname{Typ_k}}
\newcommand{\critgraph}{\mathbf{Crit}_{\Sigma,k}}
\newcommand{\critx}{\mathbf{Crit}_{X,\Sigma}}
\newcommand{\critAl}{\mathbf{Crit}^{\ell_0}_{A}}
\newcommand{\surfsubgrp}{\mathbf{G}_{\Sigma,k}}
\newcommand{\surfsubgrpx}{\mathbf{G}_{X,\Sigma}}
\newcommand{\subgrps}{\mathcal{G}_{k}}
\newcommand{\hsubgrps}{\hat{\mathcal{G}}_{k}}
\newcommand{\measl}{\mathbf{m}_{\Sigma,k}^L}
\newcommand{\meas}{\mathbf{m}_{k}}
\newcommand{\gmeasl}{\mu_{\Sigma,k}^L}
\newcommand{\gmeaslx}{\mu_{X,\Sigma}^L}
\newcommand{\gmeasx}{\mu_{X}}
\newcommand{\psmeas}{\overline{\mathbf{m}}_{\Sigma,k}^s}
\newcommand{\projc}{\mathbb{RP}^\mathcal{C}}
\newcommand{\sys}{\operatorname{sys}(\Delta)}
\newcommand{\cover}{\Sigma_\Delta}
\newcommand{\cov}{\mathbf{Cov}_{\Sigma,k}}
\title{Random covers of surfaces}
\author{Sophie Wright}
\address{School of Mathematics, University of Bristol, Bristol BS8 1UG, UK}
\email{sophie.wright@bristol.ac.uk}
\begin{document}

\begin{abstract}
    We study random covers of a closed hyperbolic surface $\Sigma$, subject to the condition that, for $k\geq 2$, the fundamental group is isomorphic to the free group $F_k$.
    We show that asymptotically they distribute according to a specific probability measure on the moduli space of metric graphs. 
    As we will demonstrate with explicit calculations for $k=2$, this allows us to determine asymptotic values for the expectation of the systole and other geometric invariants of the covers.
\end{abstract}
\maketitle

\section{Introduction}\label{sec:intro}
Fix a closed, connected, orientable hyperbolic surface $\Sigma$ of genus $g\geq 2$. We are interested in the geometric properties of random covers of the surface, that is in random conjugacy classes of subgroups of $\fund$. Any infinite index subgroup of a surface group is free. Fixing $k\geq 2$ we will focus on random subgroups isomorphic to the free group $\free$ of rank $k$. As we are not giving the surface $\Sigma$ a basepoint we will consider these subgroups up to conjugacy in $\fund$. We denote by
\begin{equation*}
    \surfsubgrp:=\left\{\Delta\subset\fund \mid \Delta\cong\free\right\}/\sim
\end{equation*}
the set of conjugacy classes of rank $k$ free subgroups of $\fund$.
Abusing notation slightly, we will let $\Delta$ denote both a subgroup and its respective conjugacy class interchangeably.
The above set is infinite, thus a starting point of this paper is to define what it means to pick a subgroup at random. 

For $k=1$, the set of conjugacy classes of cyclic subgroups of $\fund$ is in a 1-to-1 correspondence with the set of closed unoriented geodesics on the surface. To pick a random geodesic we can, for $L>0$, pick from the finite set of all closed geodesics of length at most $L$, letting then $L\to\infty$. A classical result of Huber \cite{Hub61} gives us the asymptotic growth of the number of geodesics as $L\to\infty$: 
\begin{equation}\label{eq:Huber}
    \left|\left\{
    \begin{array}{cc} \gamma\text{ closed unoriented geodesic}  \\
        \text{with length }\ell_{\Sigma}(\gamma)\leq L
    \end{array}\right\}\right| \sim \frac{e^L}{2L}
\end{equation}
where here, and for the rest of the paper, $\sim$ indicates that the ratio between the two sides tends to 1 as $L\to\infty$.

Our first result is an analogue of Huber's theorem for subgroups of rank $k$ where $k\geq2$. We must first define the length of an element of $\surfsubgrp$. 
We say a graph $X$ with a map $\phi:X\rightarrow\Sigma$ carries $\Delta \in\surfsubgrp$ when the induced homomorphism $\phi_*$ is injective and $\phi_*(\gfund)=\Delta$. This is an adaption of the notion of a carrier graph first introduced for hyperbolic 3-manifolds by White \cite{White02}. The length $\ell_\Sigma(X,\phi)$ of a carrier graph is the sum of the lengths of the images of edges and we define the length of $\Delta\in\surfsubgrp$ to be the minimum of the lengths of all graphs which carry it:
\begin{equation*}
    \ell(\Delta)=\min\left\{\ell_\Sigma(X,\phi)\mid (X,\phi) \text{ carries } \Delta\right\} \, .
\end{equation*}
A result essentially due to White \cite{White02} shows that such a minimal length carrier graph exists and is always trivalent, (see Lemmas \ref{lem:mingraphexists} and \ref{lem:minarecrit}).

We denote by $\surfsubgrp(L)$ the subset of $\surfsubgrp$ containing conjugacy classes of groups with length at most $L$. This set is finite for every $L$ and our first result gives the asymptotic growth of its cardinality:
\begin{theorem}\label{thm:countingsubgrps}
    For a closed, orientable hyperbolic surface $\Sigma$ of genus $g$ and for any $k\geq 2$, the cardinality of the set $\surfsubgrp(L)$ has asymptotic growth:
    \begin{equation*}
        \left|\surfsubgrp(L)\right|\sim \sum_{X\in\typ}\frac{1}{\left|\gaut\right|} \cdot\left(\frac{4}{3}\right)^{3-3k}\cdot\frac{(\pi^2(g-1))^{1-k}}{(3k-4)!}\cdot L^{3k-4}\cdot e^L
    \end{equation*}
    as $L\to\infty$, where $\typ$ is the set of homeomorphism classes of connected trivalent graphs of rank $k$ and $\gaut$ is the set of automorphisms of the topological graph $X$.
\end{theorem}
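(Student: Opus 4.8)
The plan is to reduce the count to a count of \emph{critical} carrier graphs, organised by topological type, and to evaluate that count by developing into $\hyp$ and invoking mixing of the geodesic flow on the unit tangent bundle $T^1\Sigma \cong \fund\backslash\psl$. By Lemmas~\ref{lem:mingraphexists} and~\ref{lem:minarecrit} every $\Delta\in\surfsubgrp$ has a length-minimising carrier graph that is trivalent of rank $k$ and critical: each edge maps to a geodesic arc, and at every vertex the three outgoing unit tangent vectors sum to $0$. I would first promote this to a near-bijection between $\surfsubgrp(L)$ and the set of critical immersions $(X,\phi)$ with $X$ trivalent of rank $k$ and carrying a labelling of its $6(k-1)$ half-edges; such a labelling splits each unlabelled immersion into $|\gaut|$ labelled ones, which is the source of the factor $1/|\gaut|$. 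Two discrepancies must then be shown to be $o(L^{3k-4}e^L)$: subgroups carried by more than one minimal graph (which forces an equality of lengths of distinct critical carriers, a positive-codimension coincidence), and critical immersions that are not minimal for the subgroup they carry. For the latter I would use that a convex-cocompact infinite-index free subgroup $\Delta\le\fund$ has critical exponent $\delta(\Delta)<1$, so a fixed such $\Delta$ has only $O(e^{\delta(\Delta)L})$ carriers of length $\le L$; organising the sum over subgroups by the value of $\ell(\Delta)$ then keeps the non-minimal critical immersions strictly below the leading order.

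Fix now a trivalent $X$ of rank $k$, with $2k-2$ vertices and $3k-3$ edges. The space of critical immersions $(X,\phi)$ is $0$-dimensional: parametrising $\phi$ by the $2(2k-2)$ coordinates of the vertex images (inside a fixed choice of homotopy classes of edges), the vertex balance conditions impose $2(2k-2)$ equations, which one expects to be transverse for generic combinatorial data. To enumerate the solutions I would develop into $\hyp$. Choosing a spanning tree $T\subset X$ and a base vertex, a critical immersion is specified by a frame $\bar r\in T^1\Sigma$ at the base vertex together with the lengths $s_1,\dots,s_{2k-3}$ of the edges of $T$ --- which, via the $120^\circ$ rule, rigidly develop the lifted tree and attach a distinguished ``stub'' frame to each of the $2k$ free half-edges --- subject to the requirement that for each of the $k$ non-tree edges $\varepsilon_j$ the geodesic flow $\Phi^t$ carry the stub frame $\psi_j(\bar r,s_\bullet)$ at one end of $\varepsilon_j$ to the stub frame $\chi_j(\bar r,s_\bullet)$ at the other in some time $t_j>0$:
\[
  \chi_j(\bar r,s_\bullet)\;=\;\Phi^{t_j}\!\bigl(\psi_j(\bar r,s_\bullet)\bigr),\qquad j=1,\dots,k,
\]
where $\psi_j,\chi_j$ are explicit maps built from right translations in $\psl$ and the tree data. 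Up to the errors above and the outer sum $\sum_X 1/|\gaut|$, the cardinality $|\surfsubgrp(L)|$ then equals the number of tuples $(\bar r,s_\bullet)$ solving these $k$ coupled flow-return equations and satisfying $\sum_i s_i+\sum_j t_j\le L$.

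The core of the argument is to show that, in the $3k-3$ edge-length coordinates (the $s_i$ together with the $t_j$), the counting measure of this solution set has density asymptotic to $C_X\,e^{\ell_1+\dots+\ell_{3k-3}}$ as all $\ell_i\to\infty$, for an explicit constant $C_X>0$. The exponential rate $e^{\sum\ell_i}$ is supplied by mixing of the geodesic flow on $T^1\Sigma$: the flow-return conditions are met by an exponentially growing family of frames $\bar r$, the $3k-3$ length contributions combining additively, and there is no polynomial correction in any single $\ell_i$ because a critical graph with trivalent vertices --- unlike a closed geodesic --- carries no continuous self-symmetry. The constant $C_X$ assembles from $\operatorname{vol}(T^1\Sigma)=8\pi^2(g-1)$ together with the Jacobians of the $120^\circ$ balance relations at the $2(k-1)$ vertices; carrying these through should give $C_X=(4/3)^{3-3k}(\pi^2(g-1))^{1-k}$ --- equivalently $\bigl(27/(8\operatorname{vol}(T^1\Sigma))\bigr)^{k-1}$ --- independently of which trivalent $X$ is used. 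Integrating the density over the simplex,
\[
  \int_{\{\ell_i>0,\ \sum_i\ell_i\le L\}} C_X\,e^{\ell_1+\dots+\ell_{3k-3}}\,d\ell_1\cdots d\ell_{3k-3}\;=\;C_X\,\frac{L^{3k-4}}{(3k-4)!}\,e^L\,(1+o(1))
\]
by a Laplace-type computation, and summing over $X\in\typ$ yields the stated asymptotic.

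I expect the main obstacle to be the density estimate of the last step, made rigorous. One needs an equidistribution statement for the geodesic flow that closes up all $k$ stub pairs simultaneously --- these pairs are all functions of the single parameter $\bar r$, so this is genuinely not a product of independent one-edge counts --- with error terms uniform in the $t_j$ and in the tree data $s_\bullet$, so that summing over the $\sim L^{3k-4}$ many length-shells reproduces the integral rather than merely a two-sided bound; effective mixing, i.e. a spectral gap for the geodesic flow on $T^1\Sigma$, is the natural input. Pinning down the exact numerical constant --- in particular the factor $4/3$ contributed per edge by the $120^\circ$ vertex geometry --- and, on the softer side, carrying out the error analysis of the reduction (which relies on the $\delta(\Delta)<1$ input), are the remaining points requiring care; the low-rank cases $k=2,3$, where the simplex is low-dimensional, should be checked directly.
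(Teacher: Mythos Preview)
Your broad strategy --- reduce $|\surfsubgrp(L)|$ to a count of critical immersions of trivalent rank-$k$ graphs, weighted by $1/|\gaut|$, and then evaluate that count --- is the paper's strategy. But the paper does not derive the critical-immersion count: it quotes it as Proposition~\ref{prop:countcrit}, which is \cite[Theorem~1.3]{ES23-commutator}. Your last two paragraphs are essentially an outline of how one might prove that cited result (Erlandsson--Souto use a variant of Delsarte's lattice-point theorem, which is close kin to your mixing argument), so the analytic heart of your proposal is a reconstruction of the external input rather than of this paper's proof.

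The reduction step is where your proposal diverges from the paper, and it has one genuine gap and one step that does not work as written. The gap: you list two discrepancies between subgroups and critical immersions but omit a third --- critical maps $\phi:X\to\Sigma$ that are not $\pi_1$-injective and hence carry no subgroup at all. The paper exhibits such maps explicitly (quotients of the $\{7,3\}$ tiling of $\hyp$) and disposes of them via Lemma~\ref{lem:longcritcarry}: every $\ell_0$-long critical map is automatically $\pi_1$-injective. The step that fails is your bound on non-minimal critical carriers via $\delta(\Delta)<1$. Even granting that a fixed $\Delta$ has $O(e^{\delta(\Delta)L})$ critical carriers of length $\le L$, summing this over subgroups does not yield $o(L^{3k-4}e^L)$: with $\sim m^{3k-4}e^m\,dm$ subgroups of length near $m$ and $\delta\le 1-\varepsilon$, one gets
\[
\int_0^L m^{3k-4}e^{m}\,e^{(1-\varepsilon)(L-m)}\,dm \;\asymp\; \frac{L^{3k-4}e^L}{\varepsilon},
\]
the same order as the main term; and there is no uniform $\varepsilon>0$ anyway, since $\delta(\Delta)$ can approach $1$. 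The paper sidesteps this completely. Proposition~\ref{prop:uniquecrit} (which in turn invokes \cite[Proposition~2.3]{ES23-commutator}) shows that any two $\ell_0$-long critical carriers of the same $\Delta$ differ by a homeomorphism of $X$; together with Lemma~\ref{lem:shortnegl} this makes the map from $\ell_0$-long critical carriers to $\surfsubgrp$ exactly $|\gaut|$-to-one onto its image, with the non-long carriers and the missed subgroups both negligible. This also absorbs your ``multiple minimal carriers'' discrepancy without any positive-codimension heuristic.
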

Note that for each $k$ there are a finite number of topological types of trivalent graphs with rank $k$, each with a finite number of automorphisms. Therefore the sum in the above theorem is finite.

While Theorem \ref{thm:countingsubgrps} is an analogue of Huber's geometric prime number theorem described in (\ref{eq:Huber}) above, work of Sasaki \cite{Sas25} gives the asymptotic growth of the number of mapping class group orbits of a finitely generated subgroup of the fundamental group of a hyperbolic surface, an analogue of Mirzakhani's work on closed geodesics of fixed type \cite{Mirz2016}.

\medskip
Returning now to the notion of a random subgroup, we choose uniformly at random from the finite set $\surfsubgrp(L)$ and then let $L\to\infty$. More precisely, we will study the family of probability measures 
\begin{equation*}
    \measl := \frac{1}{|\surfsubgrp(L)|}\sum_{\Delta\in\surfsubgrp(L)}\delta_{\Delta}
\end{equation*}
where $\delta_{\Delta}$ is the Dirac measure. Identifying the universal cover $\tilde{\Sigma}$ with $\hyp$ and the fundamental group $\fund$ with a subgroup $\Gamma\subset\psl$ with $\Sigma=\hyp/\Gamma$, we consider each $\measl$ to be a measure on:
\begin{equation*}
    \subgrps:=\{\Lambda\subset\psl \text{ discrete subgroup with } \Lambda\cong\free\}/\psl \, .
\end{equation*}
This set is endowed with a natural topology (see Section \ref{sec:spaces}). Therefore $\surfsubgrp(L)$ is a finite set of points in the space $\subgrps$ on which $\measl$ is supported. The main result of this paper describes the limiting behaviour of these measures. Letting $\gmoduli$ be the moduli space of volume one metric graphs of rank $k$ we will equip the union  $\hsubgrps:=\subgrps\cup\gmoduli$ with a suitable structure such that the following holds:

\begin{theorem}\label{thm:limitmeasure}
    The measures $\measl$ on $\hsubgrps$ converge with respect to the weak-*-topology to a limit measure
    \begin{equation*}
        \meas= \lim_{L\to\infty}\measl
    \end{equation*}
    supported on the moduli space of (volume one) metric graphs $\gmoduli$. This limit is a probability measure of full support in the Lebesgue class which is moreover independent of the surface $\Sigma$. 
\end{theorem}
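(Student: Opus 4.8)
The plan is to upgrade the bare asymptotic of Theorem~\ref{thm:countingsubgrps} to an equidistribution statement for the \emph{shapes} of minimal carrier graphs, and then to transfer it from $\gmoduli$ to $\hsubgrps$. By Lemmas~\ref{lem:mingraphexists} and~\ref{lem:minarecrit}, every $\Delta\in\surfsubgrp$ has a minimal-length carrier graph, which is trivalent; recording its homeomorphism type $X\in\typ$ and its vector of edge lengths, rescaling to total length one and quotienting by $\gaut$, yields a point $\sigma(\Delta)$ in the relatively open $(3k-4)$-dimensional cell $\mathcal{M}_X\subset\gmoduli$ attached to $X$. (Outside a positive-codimension locus in the relevant parameter space the minimal carrier is unique, so this ambiguity affects only an $o(|\surfsubgrp(L)|)$ fraction of $\surfsubgrp(L)$; where it occurs, fix an arbitrary choice.) Write $c_X:=\frac{1}{|\gaut|}\,(4/3)^{3-3k}\,(\pi^2(g-1))^{1-k}/(3k-4)!$ for the $X$-summand of Theorem~\ref{thm:countingsubgrps}, $C:=\sum_{X\in\typ}c_X$, and let $\meas$ be the probability measure on $\gmoduli$ which on each $\mathcal{M}_X$ equals $(c_X/C)\,\lambda_X$, where $\lambda_X$ is the Lebesgue (standard-simplex) probability measure of that cell. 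This will be the limit.

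The crux is a refined count. For an open $U\subseteq\mathcal{M}_X$ with piecewise-linear boundary one would re-run the argument that proves Theorem~\ref{thm:countingsubgrps}, imposing the extra constraint $\sigma(\Delta)\in U$. In that argument the $\Delta\in\surfsubgrp(L)$ with minimal carrier of type $X$ are parametrised --- modulo the symmetries of $X$ and the action of $\Gamma$ --- by configurations of geodesic arcs in $\Sigma$ realising the edges, cut out by the criticality/minimality conditions (equivalently, suitable tuples in $\psl^{E}$ with $E=3k-3$), and these are counted by an equidistribution/lattice-point argument whose main term is a volume; imposing $\sigma(\Delta)\in U$ merely intersects the region of integration with the cone over $U$. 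Granting that the argument is uniform in the shape of that region, one obtains
\begin{equation*}
    \#\{\,\Delta\in\surfsubgrp(L):\sigma(\Delta)\in U\,\}\ \sim\ c_X\,\lambda_X(U)\,L^{3k-4}e^{L}\qquad(L\to\infty),
\end{equation*}
which, for $U=\mathcal{M}_X$ and summed over $X$, recovers Theorem~\ref{thm:countingsubgrps}. Dividing the two asymptotics gives $(\sigma_*\measl)(U)\to\meas(U)$; as $\meas$ is absolutely continuous with respect to the Lebesgue class it gives no mass to any such boundary $\partial U$, and since these are $\meas$-continuity sets generating the topology of the compact metrisable space $\gmoduli$, it follows that $\sigma_*\measl\to\meas$ weakly.

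It remains to transfer this to $\hsubgrps$. The length is intrinsic to $\Delta$: pushing a minimal carrier of $\hyp/\Delta$ down the covering $\hyp/\Delta\to\Sigma$, and lifting a minimal carrier in $\Sigma$ back up, are length-preserving, so $\ell(\Delta)$ equals the minimal carrier length of the abstract hyperbolic surface $\hyp/\Delta$; and by the construction of $\hsubgrps$ in Section~\ref{sec:spaces}, the point $[\Delta]\in\subgrps$ approaches $\sigma(\Delta)\in\gmoduli$ as $\ell(\Delta)\to\infty$, uniformly in $\Delta$ --- morally, when the minimal carrier is long the hyperbolic geometry of $\hyp/\Delta$ is, after rescaling, increasingly well modelled by that metric graph. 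Fix $f\in C(\hsubgrps)$ and set $\eta(T):=\sup\{d_{\hsubgrps}([\Delta],\sigma(\Delta)):\ell(\Delta)>T\}$, so $\eta(T)\to0$; then for each $T$, since $\surfsubgrp(T)$ is finite,
\begin{equation*}
    \Bigl|\int f\,d\measl-\int_{\gmoduli}f\,d(\sigma_*\measl)\Bigr|\ \le\ \omega_f(\eta(T))+\frac{2\|f\|_\infty\,|\surfsubgrp(T)|}{|\surfsubgrp(L)|},
\end{equation*}
where $\omega_f$ is the modulus of continuity of $f$. Letting $L\to\infty$ and using the previous paragraph, $\limsup_{L}|\int f\,d\measl-\int f\,d\meas|\le\omega_f(\eta(T))$ for every $T$, hence equals $0$; thus $\measl\to\meas$ weak-$*$ on $\hsubgrps$, supported on $\gmoduli$. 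Finally $\meas$ is a probability measure by construction, it lies in the Lebesgue class (piecewise-constant positive density $c_X/C$ against the piecewise-linear Lebesgue class on $\gmoduli$), it has full support since every $c_X>0$, every $\lambda_X$ has full support on $\mathcal{M}_X$, and $\gmoduli=\overline{\bigcup_{X\in\typ}\mathcal{M}_X}$, and it is independent of $\Sigma$ because the only $g$-dependent factor, $(\pi^2(g-1))^{1-k}$, is common to every $c_X$ and cancels in $c_X/C$, leaving $\meas|_{\mathcal{M}_X}=\bigl(|\gaut|^{-1}\big/\sum_{Y\in\typ}|\operatorname{Aut}(Y)|^{-1}\bigr)\lambda_X$.

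The main obstacle is the uniformity invoked in the refined count: one must show the equidistribution/lattice-point input behind Theorem~\ref{thm:countingsubgrps} is strong enough to equidistribute the projectivised edge-length vector of the minimal carrier, with all boundary contributions --- the locus $\partial U$, the negligible set of $\Delta$ whose minimal carrier is non-unique or non-trivalent, and degenerate short-edge configurations --- controlled at order $O(L^{3k-5}e^{L})$. The secondary point needing care is extracting, from the construction of $\hsubgrps$ in Section~\ref{sec:spaces}, the asserted uniform approach $[\Delta]\to\sigma(\Delta)$ as $\ell(\Delta)\to\infty$.
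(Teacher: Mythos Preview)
Your outline is correct and matches the paper's approach almost step for step: define the shape map to $\gmoduli$, prove that the pushed-forward counting measures equidistribute to a Lebesgue-class measure there, and then transfer back to $\hsubgrps$ by showing $[\Delta]$ and its shape become close as $\ell(\Delta)\to\infty$. The two obstacles you flag at the end are exactly the two points the paper works to establish.

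For the refined count, the paper does not ``re-run'' the proof of Theorem~\ref{thm:countingsubgrps} with an extra constraint; instead it proves a separate equidistribution statement (Proposition~\ref{prop:limitmod}) by a box-counting argument. The key external input is \cite[Proposition~4.2]{ES23-commutator}, which gives asymptotics for the number of critical maps whose edge-length vector lies in a small cube $\prod_e(L_e,L_e+h]$: the count depends only on $\|\vec L\|_1$ to leading order. Summing over lattice cubes whose projection lies in $A\subset S_X$ and comparing with the full sum gives $\gmeaslx(A)\to\gmeasx(A)$ for $\meas$-continuity sets $A$. This is the uniformity you were worried about, and it comes from that specific box estimate rather than from the global asymptotic of Theorem~\ref{thm:countingsubgrps}. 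Your handling of non-uniqueness via a ``positive-codimension locus'' is also not quite how the paper proceeds: uniqueness (up to $\gaut$) is guaranteed only for $\ell_0$-long minimal carriers (Corollary~\ref{coroll:nummincar}), and the complement is shown to be negligible by Lemma~\ref{lem:shortnegl}.

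For the uniform approach $[\Delta]\to\sigma(\Delta)$, the paper's argument (the Claim inside the proof of Theorem~\ref{thm:limitmeasure}) lifts to $\hreps=\reps\cup\cv$ and uses that length functions of discrete faithful representations of $\free$ into $\psl$ accumulate, after projectivising, onto small $\free$-actions on $\R$-trees; since translation lengths in $\fund$ are bounded below by the systole of $\Sigma$, the limiting actions are free and simplicial, i.e.\ lie in $\cv$. This is the content needed to justify your $\eta(T)\to0$. One minor point: you use a metric $d_{\hsubgrps}$ and a modulus of continuity, but the paper does not equip $\hsubgrps$ with a metric; it works directly with $f\in C_c(\hsubgrps)$ and the length-function topology on the locally compact space $\hreps$.
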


Note that the moduli space of metric graphs can be viewed as a finite piecewise-Euclidean simplicial complex with orbifold identifications, whose open simplices correspond to topological types of graphs. The limit measure is proportional to the standard Lebesgue measure on each simplex, with proportion factor determined by the size of the automorphism group of the underlying graph for the simplex.
We will describe this limit in more detail in Section \ref{sec:measures}.

As an alternative to the counting measures $\measl$, Kaimanovich suggested to instead consider a variant of the classical Patterson-Sullivan measures.
Concretely, for any $s>1$ we define a probability measure on $\hsubgrps$ by: 
\begin{equation}
    \psmeas= \frac{1}{\sum_{\Delta\in \surfsubgrp}e^{-s\ell_{\Sigma}(\Delta)}}\sum_{\Delta\in\surfsubgrp}e^{-s\ell(\Delta)}\delta_\Delta \, .
\end{equation}
It turns out that as $s$ approaches $1$ this new family of measures has the same limit measure as that of the counting measures above: 
\begin{theorem}\label{thm:pattsulconverge}
    The family of measures $\psmeas$ on $\hsubgrps$ converge in the weak-*-topology to the limit measure $\meas$ as $s\downarrow1$.
\end{theorem}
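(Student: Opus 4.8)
The plan is a soft Abelian argument reducing Theorem \ref{thm:pattsulconverge} to Theorems \ref{thm:countingsubgrps} and \ref{thm:limitmeasure}. Write $N(L):=|\surfsubgrp(L)|$. By Theorem \ref{thm:countingsubgrps} the function $N(L)$ is finite for every $L$ and grows like a constant times $L^{3k-4}e^L$; in particular the Dirichlet-type series $\sum_{\Delta\in\surfsubgrp}e^{-s\ell(\Delta)}$ converges for $s>1$ and diverges at $s=1$, so $\psmeas$ is well defined for $s>1$ (here $\ell_\Sigma(\Delta)$ and $\ell(\Delta)$ denote the same quantity). The first step is to rewrite $\psmeas$ as a continuous average of the counting measures $\measl$: using $e^{-s\ell(\Delta)}=s\int_{\ell(\Delta)}^\infty e^{-sL}\,dL$ together with Tonelli, one gets $\sum_{\Delta}e^{-s\ell(\Delta)}\delta_\Delta=s\int_0^\infty e^{-sL}\,N(L)\measl\,dL$ and likewise $\sum_{\Delta}e^{-s\ell(\Delta)}=s\int_0^\infty e^{-sL}N(L)\,dL$, whence
\begin{equation*}
    \psmeas=\int_0^\infty \measl\, d\nu_s(L),\qquad d\nu_s(L):=\frac{e^{-sL}N(L)\,dL}{\int_0^\infty e^{-sL'}N(L')\,dL'},
\end{equation*}
with $\nu_s$ a probability measure on $[0,\infty)$. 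Thus each $\psmeas$ is a genuine convex combination of the $\measl$; since their limit $\meas$ is a probability measure, the family $\{\psmeas\}_{s>1}$ has no escape of mass.

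Second, I would note that $\nu_s$ escapes to infinity as $s\downarrow 1$: for any fixed $L_0>0$ the numerator of $\nu_s([0,L_0])$ is bounded uniformly in $s\ge 1$ (since $e^{-sL}\le e^{-L}$ there), while its denominator $\int_0^\infty e^{-sL}N(L)\,dL\to+\infty$ as $s\downarrow 1$ because the integral diverges at $s=1$; hence $\nu_s([0,L_0])\to 0$. Now fix a test function $f$ from the class defining weak-* convergence on $\hsubgrps$. By Theorem \ref{thm:limitmeasure} the bounded function $L\mapsto\int f\,d\measl$ converges to $\int f\,d\meas$ as $L\to\infty$; combining this with the escape of $\nu_s$, given $\delta>0$ choose $L_0$ with $|\int f\,d\measl-\int f\,d\meas|<\delta$ for $L>L_0$, and split the integral $\int f\,d\psmeas=\int_0^\infty\bigl(\int f\,d\measl\bigr)d\nu_s(L)$ at $L_0$ to obtain
\begin{equation*}
    \left|\int f\,d\psmeas-\int f\,d\meas\right|\le 2\|f\|_\infty\,\nu_s([0,L_0])+\delta .
\end{equation*}
Letting $s\downarrow 1$ gives $\limsup_{s\downarrow 1}\bigl|\int f\,d\psmeas-\int f\,d\meas\bigr|\le\delta$ for every $\delta>0$, hence $\int f\,d\psmeas\to\int f\,d\meas$, i.e. $\psmeas\to\meas$ in the weak-* topology.

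The argument is entirely soft once Theorems \ref{thm:countingsubgrps} and \ref{thm:limitmeasure} are available, so I do not expect a serious obstacle. The two points needing care are (i) the interchange of summation and integration producing the mixture identity, which is immediate by Tonelli since all terms are non-negative, and (ii) matching the class of admissible test functions on the possibly non-compact space $\hsubgrps$ with the one used in Section \ref{sec:measures}, so that both the uniform bound $|\int f\,d\measl|\le\|f\|_\infty$ and the convergence supplied by Theorem \ref{thm:limitmeasure} apply to the same $f$; if $\hsubgrps$ fails to be compact one first restricts to compactly supported continuous $f$, which is legitimate given the absence of escape of mass observed above.
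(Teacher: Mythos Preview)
Your proposal is correct and follows essentially the same approach as the paper. Both arguments rewrite $\psmeas$ as a weighted average of the counting measures $\measl$ with weights $e^{-sL}N(L)\,dL$ (the paper arrives at this via Riemann--Stieltjes integration by parts, you via the identity $e^{-sa}=s\int_a^\infty e^{-sL}\,dL$), then split at some $L_0$: the contribution from $[0,L_0]$ vanishes because the normalizer $p_s\to\infty$ as $s\downarrow 1$, and the tail is controlled by Theorem~\ref{thm:limitmeasure}. Your explicit packaging of the weights as a probability measure $\nu_s$ on $[0,\infty)$ that escapes to infinity is a slightly cleaner way to phrase the same computation.
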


Returning to the prior setting, Theorem \ref{thm:limitmeasure} allows us to examine geometric and topological properties of random covers of the surface $\Sigma$. For example, asymptotically the expected systole of a cover corresponding to a subgroup of rank $k$ will correspond to the $\meas$-expected systole of a metric graph in the moduli space $\gmoduli$. We can make explicit calculations. For example when $k=2$ we have the following: 
\begin{coroll:systole}
    The expected value of the systole of a cover corresponding to a conjugacy class of a free rank $2$ subgroup of $\fund$ of length at most $L$ is asymptotic to $\frac{23}{90}L$ as $L\to\infty$. 
\end{coroll:systole}

Similarly, we can study expected properties of orthogeodesics. An orthogeodesic is a geodesic arc joining boundary components which meet the boundary perpendicularly. An orthogeodesic is separating if it splits the surface into two connected components. When $k=2$ we have the following:
\begin{coroll:orthgeo}
    For any $\lambda>0$, the probability that the convex core of a cover corresponding to an element $\Delta\in\mathbf{G}_{\Sigma,2}(L)$ has a separating orthogeodesic of length less than $\lambda$ tends to $\frac{3}{5}$ as $L\to\infty$.
\end{coroll:orthgeo}

Topologically, by studying covers given by different trivalent graphs we can make predictions on the expected topological type of a rank $k$ cover of a surface. For example when $k=2$ there are only two topological types of covers: the pair of pants and the one-holed torus. We calculate a lower bound on the probability that a cover is a pair of pants:
\begin{coroll:toptype}
    The probability that a cover corresponding to a rank $2$ subgroup of length at most $L$ is a pair of pants is greater than or equal to $\frac{3}{5}$ as $L\to\infty$.
\end{coroll:toptype}
In a forthcoming paper \cite{Wri} we will study in more detail the topological properties of random covers. This will allow us to determine the precise limiting probabilities for the expected topological type of a cover and calculate an exact value for the statement in Corollary \ref{coroll:toptype} above.

\subsection*{A note on other models} We observe that using minimal length carrier graphs to construct probability measures for subgroups allows us to see different geometric details when compared to other possible models. For instance, one could choose a random subgroup of rank $k$ by picking $k$ generators each with translation length less than $L$. Letting the generator lengths grow to infinity one would get that the limit is the Dirac measure on the rose $R_k$ with $k$ petals of equal length. By instead using carrier graphs to study subgroups we give these groups a geometric structure independent of any choice of generators. 

\subsection*{Strategy of the proofs} The key idea in the proof of Theorem \ref{thm:countingsubgrps} is the observation that generically, as $L\to\infty$, subgroups have a unique minimal carrier graph. 
A result of Erlandsson and Souto \cite{ES23-commutator} gives the asymptotic growth of the number of minimal length immersions of a trivalent graph of a fixed topological type into a surface. We extend this result in order to calculate the limiting growth of the number of rank $k$ minimal carrier graphs on the surface and this gives us a growth rate for number of conjugacy classes of free rank $k$ subgroups of $\fund$.

To prove Theorem \ref{thm:limitmeasure} we use that the number of minimal length immersions of a trivalent graph equidistributes over the simplex corresponding to the edge lengths of the graph \cite{ES23-commutator}. We extend this equidistribution result to minimal length carrier graphs to see that the graphs spread evenly within each top-dimensional simplex in the moduli space $\gmoduli$. The elements of $\surfsubgrp$ get arbitrarily close to points in the moduli space corresponding to their carrier graphs and thus both have the same limit measure. 

\subsection*{Section-by-section summary} 
In Section \ref{sec:spaces} we discuss the topology of the space $\hsubgrps=\subgrps\cup\gmoduli$ of conjugacy classes of subgroups and its bordification by the moduli space of metric graphs.

In Section \ref{sec:graphs} we discuss lengths of subgroups defined via lengths of carrier graphs. We then prove Theorem \ref{thm:countingsubgrps} which gives the asymptotic growth of the number of conjugacy classes of subgroups.

In Section \ref{sec:measures} we introduce the probability counting measures $\measl$ on the space $\hsubgrps$ and prove Theorem \ref{thm:limitmeasure} showing that they converge to a measure $\meas$ in the Lebesgue class supported on $\gmoduli$.

In Section \ref{sec:patt-sull} we discuss the Patterson-Sullivan style probability measures $\psmeas$ on the space $\hsubgrps$ and show that these converge to the same limit measure $\meas$ from the previous case.

In Section \ref{sec:expectedprop} we discuss how to use these measures to study geometric properties of covers and we prove Corollaries \ref{coroll:systole}, \ref{coroll:orthgeo} and \ref{coroll:toptype}. 

\subsection*{Acknowledgements} 
The author would like to thank Viveka Erlandsson and Juan Souto for suggesting the project and for their invaluable help and advice throughout. The idea behind Section \ref{sec:patt-sull} on a variant of Patterson-Sullivan measure arose from discussions with Vadim Kaimanovich and Mingkun Liu. 

The author acknowledges the support of the Institut Henri Poincar\'e (UAR 839 CNRS-Sorbonne Universit\'e) and LabEx CARMIN (ANR-10-LABX-59-01), the College Doctoral de Bretagne, the University of Rennes, Rennes M\'etropole and the Heilbronn Institute for Mathematical Research.

\section{Spaces of conjugacy classes of subgroups}\label{sec:spaces}
The goal of this section is to describe a bordification of the space $\subgrps$ of conjugacy classes of discrete rank $k$ free subgroups of $\psl$ by the moduli space of metric graphs. It will be important that the obtained space is locally compact.

Given our closed hyperbolic orientable surface $\Sigma$ and some $k\geq2$ we denote, as in the introduction:
\begin{equation*}
    \surfsubgrp=\left\{\Delta\subset\fund \mid \Delta\cong\free \right\}/\sim
\end{equation*}
the set of conjugacy classes of subgroups of $\fund$ isomorphic to $\free$. 
As mentioned above, we will abuse notation and write $\Delta$ to represent both a subgroup of $\fund$ and its conjugacy class.

We will consider $\surfsubgrp$ as living inside a larger ambient space. To that end, we identify the universal cover of $\Sigma$ with the hyperbolic plane $\hyp$ and the fundamental group $\fund$ with a discrete subgroup of $\psl$, the isometry group of $\hyp$. 
We can then consider elements of $\surfsubgrp$ to be points in the set:
\begin{equation*}
    \subgrps=\{\Lambda\subset\psl \text{ discrete subgroup with }\Lambda\cong\free\}/\sim
\end{equation*}
where two subgroups are equivalent if they are conjugate in $\psl$. Note that the map $\surfsubgrp\to\subgrps$ is not injective, due to the fact that $\Sigma$ always has distinct covers which are isometric. We will not be bothered by this fact.

We want to view the set $\subgrps$ as a moduli space for free subgroups of $\psl$, analogous to the moduli space of hyperbolic surfaces, however we first need to endow it with a suitable topology and structure.

For each element $\Lambda\in\subgrps$ we can add a marking by specifying the isomorphism $\free\rightarrow\Lambda$. This is equivalent to fixing a choice of $k$ generators for the subgroup $\Lambda$. The set of marked elements of $\subgrps$ is given by:
\begin{equation*}
    \reps=\{\rho:\free\rightarrow\psl \mid \text{faithful, discrete homomorphism}\}/\sim
\end{equation*}
where the equivalence relation is conjugation in $\psl$. This set is part of the $\psl$-character variety of $\free$.
We endow $\reps$ with the induced topology, that is the topology coming from length functions of representations.
Under the usual action of $\psl$ on $\hyp$ and for $\rho\in\reps$ we can assign to each $\alpha\in\free$ a length corresponding to the translation lengths in $\hyp$ of their images under $\rho$: 
\begin{equation*}
    \ell_{\rho}:\free \rightarrow\R_{\geq0}, \quad
    \alpha \mapsto\inf_{x\in\hyp}\{d_{\hyp}(x,\rho(\alpha)\cdot x)\} \, .
\end{equation*}
We note that translation lengths are invariant under conjugation and therefore $\rho\mapsto\ell_\rho(\cdot)$ defines a map $\reps\rightarrow\R_{\geq0}^{\mathcal{C}}$ where $\mathcal{C}$ is the set of conjugacy classes of the free group $\free$.
In fact, every discrete faithful representation $\rho:\free\rightarrow\psl$ is uniquely determined up to conjugacy by its length function \cite{CM87}, thus the above map is injective and we have an embedding $\reps\hookrightarrow\R_{\geq0}^{\mathcal{C}}$. 

This embedding induces a map $\reps\rightarrow\projc$ into the projective space. It is well-known in the setting of representations of closed surfaces that the map from length functions to projective space is injective, and thus is an embedding. 
The same is true in the case of discrete faithful representations of free groups into $\psl$, these being representations corresponding to non-closed surfaces. In this setting the argument becomes more involved, we give a proof here utilising trace relations for hyperbolic elements of $\psl$: 
\begin{lemma}\label{lem:embedtoproj}
    The embedding $\reps\hookrightarrow\R_{\geq0}^{\mathcal{C}}$ descends to an embedding $\reps\hookrightarrow\projc$ into the projectivise space.
\end{lemma}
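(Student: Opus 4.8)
The plan is to show that the composition $\reps\hookrightarrow\R_{\geq0}^{\mathcal{C}}\to\projc$ is injective; the claim that it is a \emph{topological} embedding will then follow since the scaling factor recovered along the way depends continuously on the projective length function. \textbf{Reduction.} As we already know that $\reps\hookrightarrow\R_{\geq0}^{\mathcal{C}}$ is an embedding by \cite{CM87}, it suffices to prove that if $\rho_1,\rho_2\in\reps$ satisfy $\ell_{\rho_1}=c\cdot\ell_{\rho_2}$ for some $c>0$, then $c=1$; indeed, it then follows that $\ell_{\rho_1}=\ell_{\rho_2}$, so $\rho_1$ and $\rho_2$ represent the same point of $\reps$. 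In other words, we must show that $\reps$ meets each ray through the origin of $\R_{\geq0}^{\mathcal{C}}$ at most once. (Note that whether $\ell_\rho(\gamma)=0$, i.e.\ whether $\rho(\gamma)$ is trivial or parabolic, is visible in the projective class, so this dichotomy is automatically preserved.)

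\textbf{Trace relations.} Lift both $\rho_i$ to $\mathrm{SL}_2(\R)$, which is possible since $\free$ is free, and normalise the lifts so that a fixed free basis of $\free$ has positive trace. Recall that $|\mathrm{tr}\,\rho_i(g)|=2\cosh(\ell_{\rho_i}(g)/2)$ for hyperbolic $g$, while $\mathrm{tr}\,\rho_i(g^2)=2\cosh(\ell_{\rho_i}(g))$ carries no sign ambiguity. Choose two basis elements $a,b$ so that $\rho_i(a)$, $\rho_i(b)$, $\rho_i(ab)$, $\rho_i(ab^{-1})$ are all hyperbolic for $i=1,2$ and the fixed points of $\rho_i(a)$, $\rho_i(b)$ are in general position; this is possible because a discrete faithful representation of a free group has only finitely many conjugacy classes of parabolics. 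Applying the identity $\mathrm{tr}(XY)+\mathrm{tr}(XY^{-1})=\mathrm{tr}(X)\mathrm{tr}(Y)$ to $X=\rho_i(ab^{n})$, $Y=\rho_i(b)$ shows that $u^{(i)}_n:=\mathrm{tr}\,\rho_i(ab^{n})$ satisfies the linear recursion $u^{(i)}_{n+1}=\mathrm{tr}\,\rho_i(b)\cdot u^{(i)}_n-u^{(i)}_{n-1}$, whence $u^{(i)}_n=\alpha_i e^{n\ell_{\rho_i}(b)/2}+\beta_i e^{-n\ell_{\rho_i}(b)/2}$ with $\alpha_i,\beta_i\neq0$ explicit (up to sign) functions of $\mathrm{tr}\,\rho_i(a)$, $\mathrm{tr}\,\rho_i(b)$, $\mathrm{tr}\,\rho_i(ab)$. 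Comparing with $|u^{(i)}_n|=2\cosh(\ell_{\rho_i}(ab^{n})/2)$ as $n\to+\infty$ gives $\ell_{\rho_i}(ab^{n})=n\,\ell_{\rho_i}(b)+2\log|\alpha_i|+o(1)$ and $\ell_{\rho_i}(ab^{-n})=n\,\ell_{\rho_i}(b)+2\log|\beta_i|+o(1)$. Feeding in $\ell_{\rho_1}=c\cdot\ell_{\rho_2}$ then forces $|\alpha_1|=|\alpha_2|^{c}$ and $|\beta_1|=|\beta_2|^{c}$; since $\alpha_i,\beta_i$ are functions of traces and the traces of $\rho_1$ are obtained from those of $\rho_2$ by rescaling all lengths by $c$, each of these is a transcendental equation in the single unknown $c$. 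Moreover, through the Fricke identity $\mathrm{tr}\,\rho([a,b])=\mathrm{tr}\,\rho(a)^2+\mathrm{tr}\,\rho(b)^2+\mathrm{tr}\,\rho(ab)^2-\mathrm{tr}\,\rho(a)\mathrm{tr}\,\rho(b)\mathrm{tr}\,\rho(ab)-2$, the product $|\alpha_\rho\beta_\rho|$ can be re-expressed in terms of $\ell_\rho(b)$ and $\ell_\rho([a,b])$ alone. Carrying out this analysis for enough independent pairs $(a,b)$ produces an overdetermined system whose only positive solution is $c=1$.

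\textbf{Main obstacle.} The two delicate points are (i) the $\pm$ sign ambiguity of traces in $\psl$ as opposed to $\mathrm{SL}_2(\R)$, and (ii) checking that the transcendental system above genuinely forces $c=1$ and does not admit spurious solutions. Point (i) is kept under control by working with the normalised $\mathrm{SL}_2(\R)$-lifts and using the sign-free identity $\mathrm{tr}\,\rho(g^2)=2\cosh(\ell_\rho(g))$ together with parity of exponents; point (ii) is the real crux, to be handled by letting exponents tend to infinity so that only dominant-eigenvalue terms contribute, and by combining the constraints coming from several pairs. Once $c=1$ has been shown, injectivity of $\reps\hookrightarrow\R_{\geq0}^{\mathcal{C}}$ gives injectivity of $\reps\to\projc$; and as the recovered scale is a continuous function of the projective length function on the image, the map is a homeomorphism onto its image, hence the desired embedding.
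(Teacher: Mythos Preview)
Your reduction to showing that $\ell_{\rho_1}=c\,\ell_{\rho_2}$ forces $c=1$ is correct and matches the paper's setup. However, the argument has a genuine gap precisely at the step you yourself flag as ``the real crux''. You extract from the recursion the asymptotic relation $|\alpha_1|=|\alpha_2|^{c}$ (and similarly for $\beta$), but this is a single transcendental constraint in $c$ whose solution set you never analyze. The sentence ``carrying out this analysis for enough independent pairs $(a,b)$ produces an overdetermined system whose only positive solution is $c=1$'' is an assertion, not an argument: you do not exhibit the system, do not show it is overdetermined in any useful sense, and do not rule out spurious solutions. The phrase ``to be handled by letting exponents tend to infinity \ldots'' describes a hope, not a proof. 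As written, the proposal stops exactly where the difficulty begins.

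The paper's proof is complete and takes a rather different route. Instead of asymptotics of $\operatorname{tr}\rho(ab^{n})$, it picks $\beta=\gamma^{n}\alpha\gamma^{-n}$ conjugate to $\alpha$ with axis far from that of $\alpha$, so that $\ell_{\rho_1}(\alpha\beta)$ and $\ell_{\rho_1}(\alpha\beta^{-1})$ both exceed $\ell_{\rho_1}(\alpha)+\ell_{\rho_1}(\beta)$. This forces the ``minus'' sign in the trace identity (resolving your point (i) for free) and gives, with $a=\tfrac{1}{2}\ell_{\rho_1}(\alpha)$ and $x>y>2a$ the half-lengths of $\alpha\beta^{\pm1}$, the relation $2\cosh^{2}(a)=\cosh(x)-\cosh(y)$. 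The paper then sets $f(\lambda)=2\cosh^{2}(\lambda a)+\cosh(\lambda y)-\cosh(\lambda x)$ and proves, via the differential inequality $f'(\lambda)<(2a-x)\cosh(2\lambda a)+x\,f(\lambda)$ with $2a-x<0$, that $f(\lambda)<0$ for all $\lambda>1$. Since the same trace relation for $\rho_2$ (whose lengths are $\lambda_0$ times those of $\rho_1$) reads $f(\lambda_0)=0$, this forces $\lambda_0=1$. This is a direct calculus argument that actually closes your point (ii); your proposal would need a comparably explicit mechanism to be a proof.
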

\begin{proof}
Suppose representations $\rho_1,\rho_2\in\reps$ have length functions differing by a scalar $\lambda_0>0$, so that $\ell_{\rho_2}(\alpha)=\lambda_0\ell_{\rho_1}(\alpha)$ for all $\alpha\in\free$. We can assume, without loss of generality, that $\lambda_0\geq1$. We will show that we must have $\lambda_0=1$.
 
For hyperbolic elements of $\psl$ we have an explicit relationship between translation length and trace:
\begin{equation*}
    \ell_{\hyp}(A)=2\operatorname{arcosh}\left(\frac{|\operatorname{tr}(A)|}{2}\right)
\end{equation*}
for all $A\in\psl$ hyperbolic. Given $\alpha,\beta\in\free$ where  $\rho_i(\alpha)$ and $\rho_i(\beta)$ are hyperbolic, and using the trace identity $\operatorname{tr}(A)\operatorname{tr}(B)=\operatorname{tr}(AB)+\operatorname{tr}(AB^{-1})$ for $A,B\in\psl$, we have two cases, either
\begin{equation}\label{eq:embedineqpos}
    2\cosh\left(\frac{\ell_{\rho_{i}}(\alpha)}{2}\right)\cosh\left(\frac{\ell_{\rho_{i}}(\beta)}{2}\right)=\cosh\left(\frac{\ell_{\rho_{i}}(\alpha\beta)}{2}\right)+\cosh\left(\frac{\ell_{\rho_{i}}(\alpha\beta^{-1})}{2}\right)
\end{equation}
when $\operatorname{tr}(\rho_i(\alpha\beta))$ and $\operatorname{tr}(\rho_i(\alpha\beta^{-1}))$ have the same sign, or
\begin{equation}\label{eq:embedineqneg}
    2\cosh\left(\frac{\ell_{\rho_{i}}(\alpha)}{2}\right)\cosh\left(\frac{\ell_{\rho_{i}}(\beta)}{2}\right)=\left|\cosh\left(\frac{\ell_{\rho_{i}}(\alpha\beta)}{2}\right)-\cosh\left(\frac{\ell_{\rho_{i}}(\alpha\beta^{-1})}{2}\right)\right|
\end{equation}
when $\operatorname{tr}(\rho_i(\alpha\beta))$ and $\operatorname{tr}(\rho_i(\alpha\beta^{-1}))$ have different signs.

Select an element $\alpha\in\free$ with $\rho_1(\alpha)$ hyperbolic and let $\beta=\gamma^n\alpha\gamma^{-n}$ for some $\gamma\in\free$ and large enough $n$ so that the translation axes of $\rho_1(\alpha)$ and $\rho_1(\beta)$ do not intersect and are at least distance $D>0$ apart. When we make $D$ large enough we get strict inequalities: 
\begin{equation*}
    \begin{split}
        \ell_{\rho_1}(\alpha\beta)&>\ell_{\rho_1}(\alpha)+\ell_{\rho_1}(\beta) \\
        \ell_{\rho_1}(\alpha\beta^{-1})&>\ell_{\rho_1}(\alpha)+\ell_{\rho_1}(\beta) \, .
    \end{split}
\end{equation*}
Note that as $\alpha$ and $\beta$ are conjugate they have the same translation length under $\rho_1$ and we will denote $a=\frac{\ell_{\rho_1}(\alpha)}{2}=\frac{\ell_{\rho_1}(\beta)}{2}$. Given the above inequalities we see that the right hand side of (\ref{eq:embedineqpos}) is strictly greater than the left. Therefore we are in the second case with equation (\ref{eq:embedineqneg}).
 
Denote:
\begin{align*}
    x=\max\left\{\frac{\ell_{\rho_1}(\alpha\beta)}{2},\frac{\ell_{\rho_1}(\alpha\beta^{-1})}{2}\right\}, \quad y=\min\left\{\frac{\ell_{\rho_1}(\alpha\beta)}{2},\frac{\ell_{\rho_1}(\alpha\beta^{-1})}{2}\right\}  
\end{align*} 
and define a function $f$:
\begin{equation*}
    f(\lambda)= 2\cosh^2\left(\lambda a\right)+\cosh\left(\lambda y\right)-\cosh\left(\lambda x\right) .
\end{equation*}
From the equality (\ref{eq:embedineqneg}) we have that $f(1)=0$. Differentiating with respect to $\lambda$ and using that $x\geq y$ gives:
\begin{equation*}
    \begin{split}
        f'(\lambda)&= 4a\cosh(\lambda a)\sinh(\lambda a)+y\sinh(\lambda y) -x\sinh(\lambda x)\\
        &\leq 2a\sinh(2\lambda a)+x\sinh(\lambda y) -x\sinh(\lambda x)\\
        &< 2a\sinh(2\lambda a)+x(\cosh(\lambda y) -\cosh(\lambda x)+1)\\
        &= 2a\sinh(2\lambda a)+x(f(\lambda)-2\cosh^2(\lambda a)+1)\\
        &= 2a\sinh(2\lambda a)+x(f(\lambda)-\cosh(2\lambda a))\\
        &<(2a-x)\cosh(2\lambda a)+x f(\lambda)
    \end{split}
\end{equation*}
Therefore as $x>\frac{\ell_{\rho_1}(\alpha)+\ell_{\rho_1}(\beta)}{2}=2a$ we have that $f'(1)<0$ and thus $f(\lambda)<0$ for all $\lambda>1$.
Therefore $\lambda_0=1$ is the only possible scalar.
\end{proof}
Lemma \ref{lem:embedtoproj} implies that we can identify $\reps$ with its image in $\projc$.
\medskip

There is a natural action of the automorphism group $\aut$ on the space $\reps$ given by composition $\phi\cdot \rho\mapsto\rho\circ\phi^{-1}$ for any $\phi\in\aut$ and $\rho\in\reps$. As elements of $\reps$ are defined up to conjugation then the inner automorphisms of $\free$ act trivially and thus we have a natural action of $\out$. This action is discrete. 

Once we recall that $\reps$ is the space of markings $\free\rightarrow\Lambda$ for the set of all subgroups $\Lambda\in\subgrps$ we see that the $\out$ action on $\reps$ amounts to a change of the markings $\free\rightarrow\Lambda$, or equivalently, a change in the choice of generators for each $\Lambda$.
This is analogous to the mapping class group action on Teichm\"{u}ller space. 
Thus we endow the set of unmarked subgroups $\subgrps$ with a topology by viewing it as the quotient space of the action of $\out$ on $\reps$.
\medskip

We need to be able to describe what happens when a sequence of subgroups in $\surfsubgrp$ diverge inside $\subgrps$, so our aim is to add a boundary to the space $\subgrps$ to which divergent sequences of subgroups accumulate. In order to do this we will first explain how Culler-Vogtmann Outer space can be used as a boundary for the space of representations $\reps$.

The space $\reps$ of discrete faithful representations $\free\rightarrow\psl$ has a compactification $\overline{\reps}$ such that each element of the boundary $\overline{\reps}-\reps$ corresponds to the length function of a small minimal action of $\free$ on an $\R$-tree via isometries where two actions are equivalent if there exists an equivariant isometry between them. The length function of an isometric action of $\free$ on a metric tree $T$ is given by:
\begin{equation*}
    \ell_T:\free \rightarrow\R_{\geq0}, \quad
    \alpha \mapsto\inf_{x\in T}\{d_{T}(x,\alpha\cdot x)\} 
\end{equation*}
and these lengths are invariant under conjugation in $\free$. For an element of the boundary $\overline{\reps}-\reps$ the metric of the corresponding $\R$-tree is well-defined up to scaling. For details see the work of Culler and Shalen \cite{MS84}, Culler and Morgan \cite{CM87} and Bestvina \cite{Best88}.

However, the action of $\out$ on the compactification $\overline{\reps}$ is `not nice': it is not properly discontinuous and can have large stabilisers. Thus, instead of considering the full boundary $\overline{\reps}-\reps$, we will focus on a subset of the boundary with a `nice' $\out$ action. Specifically we restrict to the subspace corresponding to actions on metric simplicial trees, that is, to Culler-Vogtmann Outer space.

Culler and Vogtmann in \cite{CV86} introduced (projectivised) Outer space, denoted $\cv$, as the space of free minimal isometric actions of $\free$ on metric simplicial trees (such that quotient graph has volume one). Again two such actions are considered equivalent if there is a equivariant isometry between the trees. There is a properly discontinuous action of $\out$ on $\cv$ which changes the isometric action of $\free$ by composing with an automorphism but preserves the metric on the tree. See \cite{Vogt15} for more details on Outer space.

As was the case with representations, points in Outer space are uniquely identified by their length functions \cite{CV86}. 
Thus projectivised length functions give an embedding of Outer space into projective space, $\cv\hookrightarrow\projc$, and we identify $\cv$ with its image. 

We will denote by $\hreps=\reps\cup \cv$ the union of the space of representations with Outer space. The space $\hreps$ is not a compactification of $\reps$, it is however locally compact:
\begin{lemma}\label{lem:localcompact}
    The space $\hreps:= \reps\cup\cv$ is locally compact.
\end{lemma}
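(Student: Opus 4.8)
The plan is to prove the stronger statement that $\hreps=\reps\cup\cv$ is an \emph{open} subset of $\overline{\reps}$; since $\overline{\reps}$ is compact Hausdorff and open subsets of locally compact Hausdorff spaces are again locally compact, this yields the lemma at once. So the task reduces to showing that $\overline{\reps}\setminus\hreps=\bigl(\overline{\reps}\setminus\reps\bigr)\setminus\cv$ is closed in $\overline{\reps}$.

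The first step is to identify the boundary $\partial:=\overline{\reps}\setminus\reps$ with the boundary of Culler--Vogtmann space. I would begin by recalling the standard fact, for the Morgan--Shalen/Culler--Morgan compactification, that $\reps$ is open in $\overline{\reps}$, so that $\partial$ is closed \cite{MS84,CM87,Best88}. Next, every point of $\partial$ is the projective class of a minimal isometric action of $\free$ on an $\R$-tree obtained as a rescaled limit of discrete faithful representations $\free\to\psl$; because the approximating groups are discrete subgroups of $\psl$, such a limit has cyclic arc stabilizers and trivial tripod stabilizers, hence is a very small $\free$-tree and lies in the Culler--Vogtmann compactification $\overline{\cv}$ (the closure of $\cv$ in $\projc$, equivalently in $\overline{\reps}$). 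Thus $\partial\subseteq\overline{\cv}$. Conversely $\cv\subseteq\partial$ by the construction of $\hreps$ as a subset of $\overline{\reps}$, and since $\partial$ is closed this forces $\overline{\cv}\subseteq\partial$; hence $\partial=\overline{\cv}$ and $\overline{\reps}\setminus\hreps=\overline{\cv}\setminus\cv$.

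It then remains to check that $\overline{\cv}\setminus\cv$ is closed in $\overline{\reps}$, and this follows from two classical properties of Culler--Vogtmann space \cite{CV86}: $\overline{\cv}$ is compact (so closed in $\overline{\reps}$) and $\cv$ is open in $\overline{\cv}$; therefore $\overline{\cv}\setminus\cv$ is closed in $\overline{\cv}$ and hence in $\overline{\reps}$, which completes the argument. The step I expect to require the most care is the inclusion $\partial\subseteq\overline{\cv}$ in the previous paragraph, i.e.\ checking that rescaled limits of discrete faithful representations into $\psl$ are small enough $\free$-trees to belong to the Culler--Vogtmann compactification; the relevant input is the control of arc and tripod stabilizers in Morgan--Shalen limits, coming from the structure of limits of discrete subgroups of $\psl$. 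A more hands-on alternative for that step is to fix $T\in\cv$, let $\epsilon>0$ be the length of the shortest edge of the quotient graph $T/\free$, and use that every nontrivial element of $\free$ acts on $T$ as a hyperbolic isometry of translation length at least $\epsilon$: a non-free minimal $\free$-tree has an element of zero translation length, and a minimal $\free$-tree with an exotic (Levitt or surface-type) component has elements of arbitrarily small positive translation length, so neither can accumulate to $T$; combined with the openness of $\cv$ in $\overline{\cv}$, this again shows that $T$ has a neighbourhood in $\overline{\reps}$ disjoint from $\overline{\reps}\setminus\hreps$.
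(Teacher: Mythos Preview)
Your proposal is correct and, at its core, coincides with the paper's argument: both show that $\hreps$ is open in the compact Hausdorff space $\overline{\reps}$, using the two inputs (i) $\reps$ is open in $\overline{\reps}$ \cite{Best88} and (ii) $\cv$ is open in the boundary $\partial:=\overline{\reps}\setminus\reps$ \cite{CV86}. The paper packages this as a proof by contradiction (a sequence $p_i\in\partial\setminus\cv$ converging to $x\in\hreps$ would violate (i) or (ii)), whereas you state the openness conclusion directly, which is arguably cleaner.

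The one difference worth flagging is your detour through the identification $\partial=\overline{\cv}$. This is not needed: facts (i) and (ii) alone already give that $\partial\setminus\cv$ is closed in $\partial$, hence closed in $\overline{\reps}$, and you are done. Your inclusion $\partial\subseteq\overline{\cv}$ (that Morgan--Shalen limits of discrete faithful $\psl$-representations of $\free$ are very small trees) is true but requires the extra care you anticipate, and buys nothing for the lemma at hand. The paper avoids this by citing (ii) in the form ``$\cv$ is open in $\overline{\reps}\setminus\reps$'' rather than ``$\cv$ is open in $\overline{\cv}$'', which is exactly what your ``hands-on alternative'' also ends up using.
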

\begin{proof}
    Let $x\in\hreps$ and suppose that $\hreps$ is not locally compact at this point, meaning that all neighbourhoods $U\ni x$ contain a sequence without a convergent subsequence in $\hreps$. 

    Take a sequence of open neighbourhoods $U_i\subset\hreps$ of $x$ with $U_{i+1}\subset U_i$ such that $\bigcap_i U_i=\{x\}$. By assumption, and as $\overline{\reps}$ is compact, each $U_i$ contains a sequence with a subsequence convergent to a point $p_i\in\overline{\reps}-\hreps$ not in the space $\hreps$. Each limit point $p_i$ is the length function for very small, minimal, $F_k$-equivariant isometric action on an $\R$-tree \cite{MS84} and the sequence of limit points $(p_i)$ converges to $x$.
    
    Suppose $x\in\cv$, then as Outer space is open in the boundary $\overline{\reps}-\reps$ \cite{CV86} there must be some $i$ such that $p_i\in\cv$, but this contradicts our construction of $(p_i)$. 
    
    Similarly, as $\reps$ is open in $\overline{\reps}$ \cite{Best88}, then if $x\in\reps$ the sequence $(p_i)$ must at some point land in $\reps$. 
    This is again a contradiction, thus $\hreps$ is locally compact.
\end{proof}
Outer space $\cv$ can also be described as the space of marked (volume one) metric graphs of rank $k$ \cite{CV86}. By a marking on a graph we refer to a homotopy equivalence $f: R_k\rightarrow X$ between the rose $R_k$ with $k$ petals and a finite graph $X$, where we have identified the free group $\free$ with the rose $R_k$. For each marking $f: R_k\rightarrow X$ we can represent the possible (volume one) metric structures with an $(E-1)$-dimensional open simplex where $E$ is the number of edges of $X$. In this way we can view outer space as a union of open simplices corresponding to marked graphs where the topology arising from the simplicial structure coincides with the projective length function topology \cite{Paul89}. 

The action of $\out$ on $\cv$ acts by changing the marking of the graph while preserving the metric, notably it is a simplicial action. Note that the stabiliser of any point in Outer space is finite as it corresponds exactly to the group of isometries of the underlying graph. The $\out$-action has finite quotient which we call the moduli space of (volume one) rank $k$ metric graphs, denoted $\gmoduli=\cv/\out$. For every topological type of graph with rank $k$, moduli space contains a quotiented simplex corresponding to the volume one metrics on this graph, parametrised by edge-lengths, up to automorphisms of the graph.

We define the space $\hsubgrps$ as the union of our space of discrete subgroups of $\psl$ and the moduli space of metric graphs:
\begin{equation*}
    \hsubgrps:=\subgrps\cup\gmoduli 
\end{equation*}
equipped with the quotient topology coming from the action of $\out$ on the space $\hreps=\reps\cup\cv$ endowed with the length function topology as above.

Now we will consider $\hsubgrps=\subgrps\cup\gmoduli$ to be the ambient space in which the set $\surfsubgrp$ of conjugacy classes of free subgroups of $\fund$ sits as a cloud of points. In Section $\ref{sec:measures}$ of this paper we will define a family of measures on $\hsubgrps$ each supported on some finite subset of $\surfsubgrp$. We will then prove Theorem \ref{thm:limitmeasure} which states that this family of measures converges to a probability measure supported on $\gmoduli$ and that this limit measure is of the Lebesgue class.

\section{Counting conjugacy classes of subgroups}\label{sec:graphs}
As in the introduction we let $\surfsubgrp(L)$ denote, for any $L>0$, the finite subset of $\surfsubgrp$ containing conjugacy classes of subgroups of length at most $L$. The length here arises from the geodesic length of certain graphs immersed in the surface and we begin this section by introducing these graphs and their correspondence with the subgroups. Our goal is then to prove Theorem \ref{thm:countingsubgrps} which gives the asymptotic growth of the cardinality of $\surfsubgrp(L)$ as $L\to\infty$.
\medskip

A \textbf{carrier graph} for a subgroup $\Delta\in\surfsubgrp$ is a topological graph $X$ along with a $\pi_1$-injective continuous map $\phi:X\rightarrow\Sigma$ with $\phi_*(\gfund)=\Delta$.
This is an analogue of carrier graphs for hyperbolic 3-manifolds first introduced by White \cite{White02}. 
Our carrier graphs are immersions of a graph into the surface, we stress that they are not necessarily embeddings.
\begin{figure}[ht]
    \centering
    \includegraphics[width=0.8\linewidth]{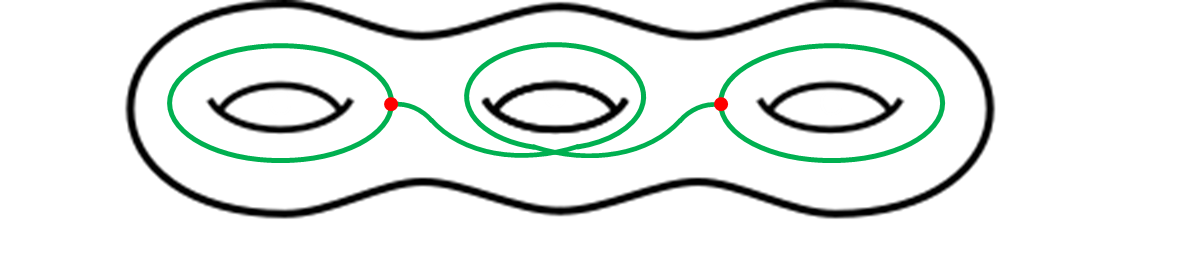}
    \caption{The image of a rank $2$ carrier graph on a genus 3 surface. The underlying graph is the dumbbell}
    \label{fig:carrieronsurf}
\end{figure}

Note that we specify graphs to be finite, connected and with all vertices degree greater than or equal to 3. 
Thus, for every $k$, we only have a finite number of homeomorphism classes of graphs with fundamental group of rank $k$.
The length of a carrier graph $(X,\phi)$ is defined to be the sum of the lengths of the images of its edges:
\begin{equation*}
    \ell_{\Sigma}(X,\phi)= \sum_{e\in\mathbf{edge}(X)}\ell_\Sigma(\phi(e)) \, .
\end{equation*}
Every subgroup $\Delta\in\surfsubgrp$ is carried by an infinite number of graphs and we define the length of $\Delta$ to be the infimum of the lengths of its carrier graphs
\begin{equation}\label{eq:defsublength}
    \ell(\Delta):=\inf\left\{\ell_\Sigma(X,\phi)\mid (X,\phi) \text{ is a carrier graph of } \Delta\right\}  .
\end{equation}
This length function is such that for every $L$ there are a finite number of elements of $\surfsubgrp$ with length less than or equal to $L$.
 
A carrier graph $(X,\phi)$ of a subgroup $\Delta\in\surfsubgrp$ is said to be \textbf{minimal} if $\ell_\Sigma(X,\phi)=\ell(\Delta)$.
White shows that any closed hyperbolic 3-manifold has a carrier graph for its fundamental group which achieves minimal length \cite[Lemma 2.2]{White02}. For our closed hyperbolic surface $\Sigma$ we can use a similar argument to show that every subgroup $\Delta\in\surfsubgrp$ has some carrier graph of minimal length. 

\begin{lemma}\label{lem:mingraphexists}
    There exists a minimal carrier graph for every subgroup $\Delta\in\surfsubgrp$.
\end{lemma}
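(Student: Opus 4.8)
The plan is to follow White's strategy for hyperbolic $3$-manifolds \cite{White02}: take a length-minimising sequence of carrier graphs for $\Delta$ and extract a convergent subsequence. The only genuinely new feature is that some edges may degenerate to points in the limit, so the minimum will in general be realised by a \emph{collapsed} carrier graph.

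First I would fix a sequence $(X_n,\phi_n)$ of carrier graphs for $\Delta$ with $\ell_\Sigma(X_n,\phi_n)\to\ell(\Delta)$. Since any graph with fundamental group of rank $k$ and all vertices of degree $\geq 3$ has at most $3(k-1)$ edges, there are only finitely many homeomorphism types; passing to a subsequence, we may assume all $X_n$ equal a fixed graph $X$. Homotoping $\phi_n$ so that each edge maps to the geodesic arc in its homotopy class rel endpoints does not increase length and does not change the conjugacy class of $\phi_{n*}$, so it preserves both $\pi_1$-injectivity and the image $\Delta$; thus assume all edges are geodesic. As $\Sigma$ is compact and the total length is bounded, after a further subsequence the points $\phi_n(v)$ converge to some $p_v\in\Sigma$ for each vertex $v$, and — passing to a subsequence so that the finitely many possible homotopy classes of bounded-length arcs between small balls around the $p_v$ stabilise — for each edge $e=\{v,w\}$ the arc $\phi_n(e)$ converges to a geodesic arc $c_e$ from $p_v$ to $p_w$ with $\ell_\Sigma(\phi_n(e))\to\ell_\Sigma(c_e)$.

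The crux is identifying which $c_e$ are constant. If a set of edges forms a cycle $\gamma$ in $X$ (in particular, if $e$ is a loop edge), then $[\gamma]\neq 1$ in $\pi_1(X)$, so by $\pi_1$-injectivity $\phi_n(\gamma)$ is a homotopically nontrivial loop in $\Sigma$ and hence has length bounded below by the systole of $\Sigma$; since $\ell_\Sigma(\phi_n(\gamma))=\sum_{e\in\gamma}\ell_\Sigma(\phi_n(e))\to\sum_{e\in\gamma}\ell_\Sigma(c_e)$, not all of these $c_e$ can be constant. Hence the set $E_0$ of edges with $c_e$ constant is a subforest of $X$. Collapsing $E_0$ yields a graph $X'$ which is connected, of rank $k$ (collapsing a forest is a homotopy equivalence), and has all vertices of degree $\geq 3$: collapsing a subtree on $m$ vertices produces a vertex of degree $\geq 3m-2(m-1)=m+2\geq 3$, while other vertices keep their degree. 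Letting $\phi_\infty\colon X'\to\Sigma$ send each collapsed vertex to the common limit point of its preimages and each surviving edge to $c_e$, we get $\ell_\Sigma(X',\phi_\infty)=\sum_{e\notin E_0}\ell_\Sigma(c_e)=\lim_n\ell_\Sigma(X_n,\phi_n)=\ell(\Delta)$.

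Finally I would check that $(X',\phi_\infty)$ carries $\Delta$. For $n$ large, $\phi_n$ is homotopic to $\phi_\infty\circ q$, where $q\colon X\to X'$ is the collapse map: each vertex maps near its limit point, each surviving edge maps to an arc in the homotopy class of $c_e$, and each collapsed edge maps to a short null-homotopic arc. Hence $\phi_{n*}=\phi_{\infty*}\circ q_*$ up to conjugacy, with $q_*$ an isomorphism, so $\phi_{\infty*}$ is injective with image $\Delta$, and $(X',\phi_\infty)$ is the desired minimal carrier graph. I expect the degeneration analysis to be the main obstacle: ruling out collapsing cycles via the systole bound, and verifying that the collapsed graph still satisfies the standing hypotheses (connected, of rank $k$, minimum degree $\geq 3$) so that it is a legitimate carrier graph in the sense used throughout the paper.
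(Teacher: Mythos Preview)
Your argument is correct and follows the same compactness strategy as the paper (a minimising sequence plus Arzel\`a--Ascoli on maps into the closed surface $\Sigma$). The difference is one of economy: the paper's proof is much shorter because it never collapses anything. It fixes a graph type $X$, takes a minimising sequence of maps $\phi_i\colon X\to\Sigma$, extracts a uniformly convergent subsequence with limit $\psi\colon X\to\Sigma$, and observes that since $\phi_j$ and $\psi$ are eventually homotopic the limit still carries $\Delta$; it then minimises over the finitely many graph types. Crucially, the paper's definition of carrier graph only asks for a continuous $\pi_1$-injective map $X\to\Sigma$, so nothing prevents some edges from having zero length in the limit --- the domain remains the same graph $X$ with all vertices of degree $\geq 3$, and the total length is still the infimum.

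Your extra work --- the systole bound showing collapsing edges form a forest, the collapse to $X'$, the degree check $m+2\geq 3$ --- is all correct but not needed under the paper's conventions. What it buys you is a minimal carrier graph with \emph{no} degenerate edges, which is a slightly stronger conclusion and dovetails nicely with the subsequent observation that minimal carriers are critical (hence trivalent with non-degenerate geodesic edges). So your route is more self-contained for that purpose, while the paper's is terser and defers the non-degeneracy to the critical-graph discussion.
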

\begin{proof}
    Pick some graph $X$ with $\pi_1(X)\cong\free$. Let $\ell_X$ be the infimum of the lengths of maps $X\rightarrow\Sigma$ carrying $\Delta$:
    \begin{equation*}
        \ell_{X}:=\inf\{\ell_{\Sigma}(X,\phi)\mid\phi:X\rightarrow\Sigma \text{ carries }\Delta\} \,.
    \end{equation*} 
    We can construct a family of maps $\mathcal{F}_{X}=\{\phi_i:X\rightarrow\Sigma\mid (X,\phi_i)\text{ carries }\Delta\}_{i=1}^\infty$ such that each $(X,\phi_i)$ is a carrier graph for $\Delta$ and the lengths $\ell_\Sigma(X,\phi_i)\to \ell_{X}$ as $i\to\infty$. As $\Sigma$ is a compact space, the Arzela-Ascoli theorem shows us that the closure of $\mathcal{F}_{X}$ is compact in the set of continuous maps $C(X,\Sigma)$. Thus there exists a subsequence $\phi_j$ of $\mathcal{F}_{X}$ which converges uniformly to a continuous map $\psi:X\rightarrow\Sigma$ with $\ell_{\Sigma}(X,\psi)=\ell_{X}$ and such that $(X,\psi)$ carries $\Delta$. 
    
    As there are finitely many topological types of graphs with fundamental group $\free$, we can repeat the process above for each of these graphs and take the carrier graph of minimal length. 
\end{proof}

Note that although all subgroups $\Delta\in\surfsubgrp$ have a minimal carrier graph it may not be unique. For example, if $\Delta$ has a minimal carrier graph $(X,\phi)$ such that the underlying graph $X$ has a non-trivial automorphism $A:X\rightarrow X$ then the carrier graph $\phi\circ A:X\rightarrow\Sigma$ is also minimal. 
There are also subgroups which have two minimal carriers with different underlying topological graphs, however we will see later in this section that this situation rarely occurs.

In the setting of closed 3-manifolds, White showed that if a graph carrying the fundamental group of the manifold has minimal length then it is necessarily geodesic and trivalent, with half edges spaced evenly around vertices \cite{White02}. Essentially the same argument can be applied to the 2-dimensional case and for graphs carrying subgroups of fundamental groups, we call such a graph critical:
\begin{definition}\label{def:critmap}
    A \textbf{critical graph map} for a surface $\Sigma$ is a connected trivalent graph $X$ along with a continuous map $\phi:X\rightarrow\Sigma$ such that edges are sent to non-degenerate geodesic arcs which meet at vertices with angles $\frac{2\pi}{3}$.
\end{definition}
The key idea of White is that any non-critical map can be made shorter by small local deformations and therefore cannot have minimal length.
For instance, take a carrier graph $(X,\phi)$ for $\Delta\in\surfsubgrp$ and suppose that there exists an edge $e\in\mathbf{edge}(X)$ such that $\phi(e)$ is not a geodesic segment. We can homotop $\phi(e)$ fixing the vertices to get a new shorter carrier graph of $\Delta$ where $e$ is sent to a geodesic arc. Thus a minimal carrier graph must map edges to geodesic arc segments.

If a carrier graph is such that two incident edges of a degree 3 vertex meet at an angle less than $\frac{2\pi}{3}$ then we are able to deform the carrier graph by `pulling' the vertex in such a way which reduces the length, see Figure \ref{fig:optimalangle}.
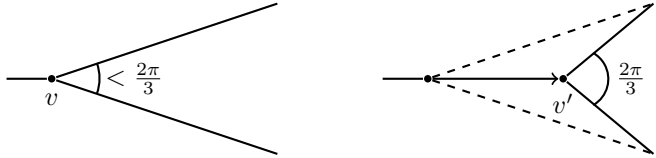
\begin{figure}[ht]
\centering
\begin{tikzpicture}[scale=2, thick]
\node[circle, fill=black, inner sep=1pt, label=below:$v$] (v) at (-0.5,0.5) {};
\draw (v) -- (1,0) node[right] {};
\draw (v) -- (1,1) node[above right] {};
\draw (v) -- (-0.8,0.5) node[above left] {};
\draw (-0.2,0.4) arc[start angle=-20, end angle=20, radius=0.3];
\node at (0.05,0.5) {$<\frac{2\pi}{3}$};

\node[circle, fill=black, inner sep=1pt] (vn) at (2,0.5) {};
\draw[dashed] (vn) -- (3.5,0) node[right] {};
\draw[dashed] (vn) -- (3.5,1) node[above right] {};
\draw (vn) -- (1.7,0.5) node[above left] {};
\node[circle, fill=black, inner sep=1pt, label=below:$v'$] (vp) at (2.9,0.5) {};
\draw (vp) -- (3.5,0);
\draw (vp) -- (3.5,1);
\draw[->, thick] (vn) -- (vp);
\draw (3.1,0.325) arc[start angle=-60, end angle=60, radius=0.2];
\node at (3.35,0.5) {$\frac{2\pi}{3}$};
\end{tikzpicture}
\caption{Moving a trivalent vertex so that all angles are equal to $\frac{2\pi}{3}$ reduces the overall length}
\label{fig:optimalangle}
\end{figure}

Similarly, if a vertex has degree greater than 3 then two incident edges must meet at an angle less than $\frac{2\pi}{3}$. We can then construct a new shorter carrier graph by `splitting' this vertex into two new vertices of smaller degree, see Figure \ref{fig:trivalenceofvert}. 
\begin{figure}[ht]
\centering
\begin{tikzpicture}[scale=2, thick]
\node[circle, fill=black, inner sep=1pt, label=below:$v$] (v) at (-0.6,0.6) {};
\draw (v) -- (1,0) node[right] {};
\draw (v) -- (1,1) node[above right] {};
\draw (v) -- (-0.8,1.2) node[above left] {};
\draw (v) -- (-1.2,0.2) node[below left] {};
\draw (-0.3,0.48) arc[start angle=-20, end angle=20, radius=0.3];
\node at (-0.05,0.55) {$<\frac{2\pi}{3}$};

\node[circle, fill=black, inner sep=1pt, label=below:$v$] (vn) at (2.4,0.6) {};
\draw[dashed] (vn) -- (4,0) node[right] {};
\draw[dashed] (vn) -- (4,1) node[above right] {};
\draw (vn) -- (2.2,1.2) node[above left] {};
\draw (vn) -- (1.8,0.2) node[below left] {};
\node[circle, fill=black, inner sep=1pt, label=below:$v'$] (vp) at (3.4,0.5) {};
\draw (vp) -- (4,0);
\draw (vp) -- (4,1);
\draw[->, thick] (vn) -- (vp);
\draw (3.6,0.325) arc[start angle=-60, end angle=60, radius=0.2];
\node at (3.85,0.5) {$\frac{2\pi}{3}$};
\end{tikzpicture}
\caption{Splitting a vertex with degree $>3$ reduces the overall length}
\label{fig:trivalenceofvert}
\end{figure}
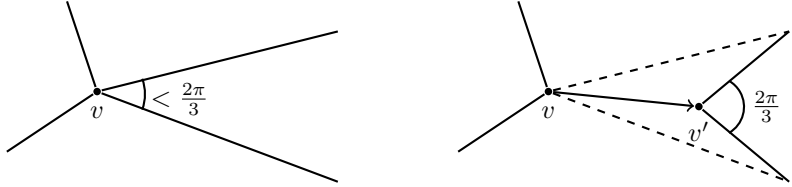
We can keep performing this action and reducing length until all vertices are degree 3.

By definition a minimal carrier graph cannot be deformed to reduce its length. Therefore it follows that any minimal length carrier graph must be trivalent with non-degenerate geodesic edges which meet at angles $\frac{2\pi}{3}$. For future reference we state this as a lemma:
\begin{lemma}\label{lem:minarecrit}
    Minimal carrier graphs are always critical. \qed
\end{lemma}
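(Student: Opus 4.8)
The plan is to adapt White's argument \cite{White02} from the three-dimensional setting, using only local length-decreasing deformations. The starting observation is that any homotopy of the defining map $\phi$ of a carrier graph $(X,\phi)$ leaves the induced homomorphism $\phi_*$ — and in particular its image $\phi_*(\gfund)=\Delta$ — unchanged, so every map obtained from $\phi$ by a homotopy is again a carrier graph for $\Delta$, as is the result of collapsing an edge of $X$ that is mapped to a point. Consequently, to prove that a minimal carrier graph is critical it suffices to show that if any of the three conditions of Definition \ref{def:critmap} fails, then one can strictly decrease $\ell_\Sigma(X,\phi)$ by such a move, contradicting $\ell_\Sigma(X,\phi)=\ell(\Delta)$.

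First I would deal with the edges. If some edge $e$ is not sent to a geodesic arc, I replace $\phi|_e$ by the unique geodesic arc homotopic to it rel endpoints in $\Sigma$, which strictly shortens the graph; so in a minimal carrier graph every edge is geodesic. Degenerate edges are excluded separately: a loop edge $e$ with $\phi(e)$ a point would force $\phi_*$ to kill the nontrivial class $[e]\in\pi_1(X)$, contradicting injectivity, while a non-loop edge $e=[v,w]$ with $\phi(e)$ a point can be collapsed to produce a homotopy-equivalent graph carrying $\Delta$ with the same length but with a vertex of degree $\deg(v)+\deg(w)-2\geq 4$, which is then dispatched by the next step.

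Next I would handle the two vertex conditions together, via the first variation of length. For a graph with geodesic edges, displacing the image $\phi(v)$ of a vertex in a direction $u\in T_{\phi(v)}\Sigma$ changes the total length, to first order, by $-\langle u,\sum_i t_i\rangle$, where the $t_i$ are the unit tangent vectors at $\phi(v)$ pointing along the incident edges; minimality thus forces $\sum_i t_i=0$. When $v$ is trivalent, three unit vectors summing to zero is equivalent to their making pairwise angles $\tfrac{2\pi}{3}$ (Figure \ref{fig:optimalangle}), which is exactly Step~3. When $v$ has degree $d\geq 4$, cyclically ordering the directions in the unit tangent circle shows that two of them, consecutive in this order, make an angle at most $\tfrac{2\pi}{d}\leq\tfrac{\pi}{2}<\tfrac{2\pi}{3}$; since two unit vectors at angle $<\tfrac{2\pi}{3}$ have sum of norm strictly greater than $1$, detaching those two edges, reattaching them to a new vertex $v'$ joined to $v$ by a short edge, and pushing $v'$ along the bisecting direction strictly decreases the total length to first order (Figure \ref{fig:trivalenceofvert}); iterating removes all vertices of degree $>3$. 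Combining the three steps, a minimal carrier graph satisfies every requirement of Definition \ref{def:critmap}.

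The genuinely delicate points are not the variational computations, which are elementary, but the bookkeeping around the collapsing and splitting moves: one must check that collapsing a degenerate non-loop edge really does not alter $\Delta$ (handled by separating the loop and non-loop cases above) and that the vertex-splitting move can always be performed inside $\Sigma$ while remaining a carrier graph. The latter is in fact automatic here, because a carrier graph is only required to be a $\pi_1$-injective continuous map, not an embedding or immersion, so each move is a homotopy of $\phi$ followed by a homotopy equivalence of the underlying graph — precisely the operations under which the property of carrying $\Delta$ is preserved.
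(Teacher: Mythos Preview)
Your proposal is correct and follows essentially the same approach as the paper: both argue, following White, that any non-critical carrier graph can be strictly shortened by local moves (straightening edges, moving or splitting vertices), so a minimal one must be critical. You supply more detail than the paper does---the explicit first-variation formula $-\langle u,\sum_i t_i\rangle$, the pigeonhole bound $2\pi/d$ for high-valence vertices, and the separate treatment of degenerate loop versus non-loop edges---but the underlying strategy is identical.
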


A result of Erlandsson and Souto \cite[Theorem~1.3]{ES23-commutator} utilises a variation of Delsarte's lattice point counting theorem \cite{Del42} to give the asymptotic growth of the number of critical maps of a fixed graph:
\begin{prop}\label{prop:countcrit}
    Let $X$ be a connected trivalent graph of some fixed topological type. Let $\critx(L)$ be the set of critical graph maps $\phi:X\rightarrow\Sigma$ with length less than or equal to $L$. Then we have
    \begin{equation*}
        |\critx(L)|\sim \left(\frac{2}{3}\right)^{3\chi(X)}\cdot\frac{\text{vol}(T^1\Sigma)^{\chi(X)}}{(-3\chi(X)-1)!}\cdot L^{-3\chi(X)-1}\cdot e^L
    \end{equation*}
    as $L\to\infty$, where $\chi(X)$ is the Euler-characteristic of the graph $X$.
\end{prop}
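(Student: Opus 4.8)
This is \cite[Theorem~1.3]{ES23-commutator}; I would prove it by parametrising critical graph maps by a small amount of local data and then reducing to a lattice-point count. Write $E=\left|\mathbf{edge}(X)\right|$ and let $V$ be the number of vertices; trivalence gives $3V=2E$, so that $\chi(X)=V-E=-E/3$ and the rank of $\pi_1(X)$ equals $k=1-\chi(X)=E-V+1$. Identify $\widetilde\Sigma$ with $\hyp$ and $\fund$ with a cocompact lattice $\Gamma<\psl\cong T^1\hyp$, so that $\Gamma\backslash\psl\cong T^1\Sigma$ and Haar measure corresponds to the Liouville measure, normalised so that $\operatorname{vol}(\Gamma\backslash\psl)=\operatorname{vol}(T^1\Sigma)$. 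Fix a spanning tree $\mathcal T\subset X$, a base vertex $v_0$, and (at every vertex) an identification of its three half-edges with the legs of an abstract tripod. Then a frame $\vec u\in T^1\hyp$ --- the tripod at a chosen lift of $v_0$ --- together with a length vector $\boldsymbol\ell=(\ell_e)_{e\in\mathbf{edge}(X)}\in\R_{>0}^{E}$ determines a developing map $\widetilde\phi\colon\widetilde X\to\hyp$ of the universal-cover tree, built by running the tripod legs out as geodesics, travelling distance $\ell_e$ along each edge $e$, and extending (the $\tfrac{2\pi}{3}$-condition fixing the tripod at each new vertex from the incoming direction). By homogeneity of $\hyp$ and $\pi_1(X)$-periodicity of the data, $\widetilde\phi$ is equivariant for a homomorphism $\rho_{\vec u,\boldsymbol\ell}\colon\pi_1(X)\to\psl$, and it descends to a critical graph map $X\to\Sigma$ of length $\sum_e\ell_e$ precisely when $\rho_{\vec u,\boldsymbol\ell}(\pi_1(X))\subset\Gamma$; conversely every critical graph map arises this way, uniquely up to the change-of-lift $\Gamma$-action on $\vec u$ (which conjugates $\rho$) and a finite combinatorial ambiguity (which cyclic order the half-edges realise at each vertex).

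Choose a free basis $f_1,\dots,f_k$ of $\pi_1(X)$ dual to $\mathcal T$ and put $\Phi(\vec u,\boldsymbol\ell)=\bigl(\rho_{\vec u,\boldsymbol\ell}(f_1),\dots,\rho_{\vec u,\boldsymbol\ell}(f_k)\bigr)\in\psl^{k}$. The previous paragraph gives, up to an overall finite combinatorial factor,
\begin{equation*}
    \left|\critx(L)\right|=\#\Bigl\{(\vec u,\boldsymbol\ell)\in D\times\R_{>0}^{E}\ :\ \textstyle\sum_{e}\ell_e\le L,\ \ \Phi(\vec u,\boldsymbol\ell)\in\Gamma^{k}\Bigr\},
\end{equation*}
where $D\subset T^1\hyp$ is a fundamental domain for $\Gamma$. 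Since $\dim\bigl(T^1\hyp\times\R_{>0}^{E}\bigr)=3+E=3k=\dim\psl^{k}$, the map $\Phi$ is generically a local diffeomorphism and the set above is finite. Rewriting it as $\sum_{\gamma\in\Gamma^{k}}\#\bigl(\Phi^{-1}(\gamma)\cap(D\times\{\sum_e\ell_e\le L\})\bigr)$ exhibits it as a count of the cocompact lattice $\Gamma^{k}<\psl^{k}$ inside the growing region $\Phi(D\times\{\sum_e\ell_e\le L\})$, weighted by the local multiplicity of $\Phi$.

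Such a count is what the variation of Delsarte's lattice-point counting theorem \cite{Del42} --- the analytic core of \cite{ES23-commutator} --- is designed to evaluate: it gives that the count is asymptotic to $\operatorname{vol}(T^1\Sigma)^{-k}\int_{D}\int_{\{\sum_e\ell_e\le L\}}\left|\mathrm{Jac}\,\Phi(\vec u,\boldsymbol\ell)\right|\,d\boldsymbol\ell\,d\vec u$. Its asymptotics are read off from three observations. First, $\Phi$ is assembled from geodesic flows --- which expand the unstable direction by $e^{\ell_e}$ along the edge $e$ --- interleaved with the fixed $\tfrac{2\pi}{3}$-rotations at the tripods, so that $\left|\mathrm{Jac}\,\Phi(\vec u,\boldsymbol\ell)\right|\sim c\,e^{\sum_e\ell_e}$ as all $\ell_e\to\infty$, with $c$ a constant accumulated locally at the tripods; this produces the $e^{L}$ and, once the combinatorial factor and the tripod computation are folded in, the constant $(2/3)^{3\chi(X)}$. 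Second, slicing the inner integral by $s=\sum_e\ell_e$ and using that $\{\boldsymbol\ell\in\R_{>0}^{E}:\sum_e\ell_e=s\}$ is an $(E-1)$-simplex of $(E-1)$-volume $s^{E-1}/(E-1)!$ turns it into $\int_0^{L}\tfrac{s^{E-1}}{(E-1)!}e^{s}\,ds\sim\tfrac{L^{E-1}}{(E-1)!}e^{L}$, the factor $L^{-3\chi(X)-1}/(-3\chi(X)-1)!$. Third, $\int_D d\vec u=\operatorname{vol}(T^1\Sigma)$ cancels one of the $k$ copies of $\operatorname{vol}(T^1\Sigma)^{-1}$, leaving $\operatorname{vol}(T^1\Sigma)^{1-k}=\operatorname{vol}(T^1\Sigma)^{\chi(X)}$. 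Assembling these reproduces the stated asymptotic.

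The delicate step is the lattice-point count itself. Unlike in the classical Delsarte situation the regions $\Phi\bigl(D\times\{\sum_e\ell_e\le L\}\bigr)$ are not metric balls but distorted, product-type regions carrying a non-constant multiplicity weight, so one must establish their well-roundedness and combine it with mixing of the $\psl^{k}$-action on $\Gamma^{k}\backslash\psl^{k}$ in order to obtain the main term with an error of order $o\bigl(L^{-3\chi(X)-1}e^{L}\bigr)$. Two further, lower-order, points also require attention: the loci where $\Phi$ fails to be a submersion, where some $\ell_e$ stays bounded (the faces of the simplex), or where the resulting graph map is not $\pi_1$-injective, must all be shown to contribute negligibly; and the local tripod Jacobian must be computed to pin down the constant $(2/3)^{3\chi(X)}$ precisely.
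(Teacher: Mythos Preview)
The paper does not prove this proposition at all: it is quoted verbatim as \cite[Theorem~1.3]{ES23-commutator}, with only the remark that the argument ``utilises a variation of Delsarte's lattice point counting theorem''. You correctly identify this in your first sentence, so on the narrow question of matching the paper there is nothing further to compare.

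Your sketch of the Erlandsson--Souto argument is a reasonable outline and is consistent with the one hint the paper gives (the Delsarte-type input). The parametrisation by a frame at a base vertex together with an edge-length vector, the dimension count $3+E=3k=\dim\psl^{k}$, the reduction to a lattice-point count for $\Gamma^{k}$ in $\psl^{k}$, and the slicing $\int_0^L \tfrac{s^{E-1}}{(E-1)!}e^{s}\,ds$ are all on target. Two places where your sketch is genuinely hand-wavy, and where the actual work in \cite{ES23-commutator} lies, are exactly the ones you flag at the end: (i) the regions $\Phi(D\times\{\sum_e\ell_e\le L\})$ are not balls, so one needs an equidistribution/mixing argument rather than the classical Delsarte statement, and (ii) the precise constant $(2/3)^{3\chi(X)}$ requires an honest local computation at the tripods together with careful bookkeeping of the ``finite combinatorial factor'' you invoke (the cyclic orders at the $V=-2\chi(X)$ vertices). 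As written, your sketch asserts rather than derives that constant; if you were actually proving this you would need to carry it through, but as a pointer to the cited result it is adequate.
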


If the induced homomorphism of a critical graph map is faithful then we see that the map is also a carrier graph. However, this is not always the case.
For example, for any $n\geq7$ there exists a tiling of the hyperbolic plane by regular $n$-sided polygons with interior angles $\frac{2\pi}{3}$, see Figure \ref{fig:hepttiling}. The $1$-skeleton of such a tiling is an infinite trivalent graph with geodesic edges meeting at vertices with angle $\frac{2\pi}{3}$. 

\begin{figure}[ht]
    \centering
    \includegraphics[width=0.5\linewidth]{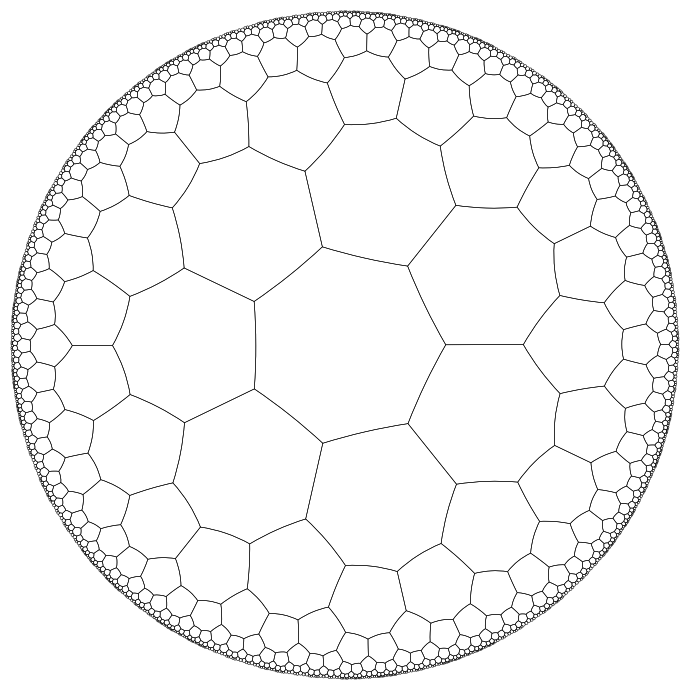}
    \caption{Tiling of the hyperbolic plane by heptagons with angles $\frac{2\pi}{3}$, made with `hypertiling' \cite{hyptiling}}
    \label{fig:hepttiling}
\end{figure}
If $\Sigma$ is a closed surface obtained as a quotient of $\hyp$ by a torsion-free finite index subgroup of the symmetry group of the tiling, then the projection $p:\hyp\to\Sigma$ maps the $1$-skeleton to a finite trivalent geodesic graph on $\Sigma$. This inclusion is a critical graph map, but the boundary cycle of any polygon tile is null-homotopic in $\Sigma$, so the induced homomorphism is not faithful.

We want to restrict our attention to the critical graph maps which are also carrier graphs. Fortunately, non-faithful critical graphs have certain strict geometric constraints, specifically there are limits on their minimum edge lengths. For example, in the above hyperbolic tiling construction some elementary hyperbolic geometry shows that a regular hyperbolic polygon with interior angles $\frac{2\pi}{3}$ must have edge lengths less than $\log(3)\approx 1.0986$.

We make this precise by introducing the notion of long edge lengths. We say a graph map $(X,\phi)$ is \textbf{$\ell$-long} if for all edges $e\in\mathbf{edge}(X)$ we have $\ell_\Sigma(\phi(e))>\ell$.

\begin{lemma}\label{lem:longcritcarry}
    For a hyperbolic surface $\Sigma$ there exists some $\ell_0>0$ such that any $\ell_0$-long critical graph map $(X,\phi)$ on $\Sigma$ with Euler-characteristic $\chi(X)=1-k$ is a carrier graph for some subgroup in $\surfsubgrp$.
\end{lemma}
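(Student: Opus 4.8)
The plan is to show that if all edges of a critical graph map $(X,\phi)$ are sufficiently long, then the induced homomorphism $\phi_*:\gfund\to\fund$ must be injective, so that $(X,\phi)$ is a carrier graph for the subgroup $\Delta=\phi_*(\gfund)$, which has rank $1-\chi(X)=k\geq 2$ and hence lies in $\surfsubgrp$. Suppose not: then there is a reduced edge-path loop $\gamma$ in $X$ representing a nontrivial element of $\gfund$ with $\phi_*[\gamma]=1$ in $\fund$. Lift to the universal cover $\hyp$: the path $\tilde\phi(\tilde\gamma)$ is a concatenation of geodesic segments (the edge-images), consecutive segments meeting at angle $\tfrac{2\pi}{3}$ (i.e.\ turning by exterior angle $\tfrac{\pi}{3}<\pi$ at each vertex), and because $\phi_*[\gamma]=1$ this lifted path is a \emph{closed} geodesic polygon in $\hyp$. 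The key point is a quantitative local-to-global statement: a closed piecewise-geodesic loop in $\hyp$ all of whose exterior turning angles are at most $\tfrac{\pi}{3}$ in absolute value, and all of whose sides have length greater than some universal $\ell_0$, cannot close up. This is the main obstacle and the heart of the argument.

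To make this precise I would invoke (or reprove) the standard fact that a piecewise-geodesic path in a CAT($-1$) space with bounded turning angles and sufficiently long legs is a quasigeodesic, in fact a uniformly-close-to-geodesic path; see for instance the thin-triangle / broken-geodesic estimates in Bridson--Haefliger or the classical argument that such a path satisfies a Morse-type property. Concretely, if each segment has length $> \ell_0$ and each turn is by at most $\tfrac{\pi}{3}$, then the distance between the two endpoints of an $n$-segment subpath grows at least linearly in $n$ with a definite rate depending only on $\ell_0$ and the curvature bound $-1$ of $\hyp$; choosing $\ell_0$ large enough (e.g.\ so that the Gauss--Bonnet obstruction to closing up a geodesic polygon with angle defect at least $\tfrac{\pi}{3}$ per vertex forces too much area, or directly via the contraction estimate for geodesic segments meeting at a definite angle) makes this rate strictly positive, so the path is never closed unless it is trivial. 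Since $\tilde\gamma$ has at least one edge and $\gamma$ is a nontrivial reduced loop, $\tilde\phi(\tilde\gamma)$ has at least one segment and positive displacement — contradiction.

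A slightly more hands-on route, matching the tiling discussion preceding the lemma, is via area: the lifted closed polygon bounds a disc in $\hyp$; triangulating and applying Gauss--Bonnet, the total area is $\pi(n-2) - \sum(\text{interior angles})$, and with interior angles $\tfrac{2\pi}{3}$ this is $\pi(n-2) - \tfrac{2\pi n}{3} = \tfrac{\pi n}{3} - 2\pi$; meanwhile the isoperimetric inequality in $\hyp$ bounds the area below by a linear function of the perimeter $> n\ell_0$, and for $\ell_0$ large the two estimates are incompatible for all $n\geq 1$ (the short cases $n\leq$ small being handled directly, as a loop with one or two long geodesic sides and turning $\leq\tfrac{\pi}{3}$ manifestly cannot close). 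Either way, once injectivity of $\phi_*$ is established, $(X,\phi)$ is by definition a carrier graph for $\Delta=\phi_*(\gfund)$, and since $X$ is connected trivalent with $\chi(X)=1-k$ we have $\Delta\cong\free$, so $\Delta\in\surfsubgrp$. I expect the extraction of an explicit value of $\ell_0$ (the paper earlier notes $\log 3$ is the relevant threshold for the hexagon-angle tilings) to be the only genuinely delicate computation; the structural argument is robust.
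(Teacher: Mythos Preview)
Your primary argument---lifting a hypothetical null-homotopic reduced edge-loop to a piecewise-geodesic path in $\hyp$ with segments of length $>\ell_0$ and turning angles $\tfrac{\pi}{3}$, then invoking the local-to-global quasigeodesic principle (citing Bridson--Haefliger) to conclude that such a path has endpoints at positive distance and hence cannot close up---is exactly the paper's approach, and it is correct.

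Your alternative Gauss--Bonnet route, however, has a genuine gap: the hyperbolic isoperimetric inequality goes the \emph{other} way (it bounds area \emph{above} by a linear function of perimeter, roughly $A\le L$), so you cannot extract a lower bound on the area from the perimeter $>n\ell_0$. Moreover, the lifted loop need not bound an embedded disk, so applying Gauss--Bonnet to ``the polygon'' is already delicate. Even granting embeddedness, the area formula $\tfrac{n\pi}{3}-2\pi$ only forces $n\ge 7$, recovering the tiling observation preceding the lemma but giving no constraint on $\ell_0$. Drop this alternative and keep the quasigeodesic argument.

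One minor correction: the paper's $\log 3$ is an upper bound on the edge-length of a regular $\tfrac{2\pi}{3}$-angled hyperbolic polygon, not a candidate value for $\ell_0$; no explicit $\ell_0$ is extracted in the proof.
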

\begin{proof}
    Suppose, we have an $\ell_0$-long critical graph $(X,\phi)$ which is not a carrier graph, meaning the induced homomorphism $\phi_*:\gfund\rightarrow\fund$ is not injective. 

    Pick a basepoint $x_0\in X$ and let $\phi(x_0)\in \Sigma$ be a basepoint for the surface. Choosing lifts $\tilde{x}_0$ and $\widetilde{\phi(x_0)}$ to the universal covers $\tilde{X}$ and $\hyp$ uniquely determines a lift $\tilde{\phi}:\tilde{X}\rightarrow\hyp$. As $(X,\phi)$ is critical then edges of $\tilde{X}$ must be sent by $\tilde{\phi}$ to geodesic segments in $\hyp$ which meet at angles $\frac{2\pi}{3}$. The non-injectivity condition implies that there exists a non-trivial loop in the image $\tilde{\phi}(\tilde{X})$.  

    Suppose $x,y\in \mathbf{vert}(\tilde{X})$ are distinct vertices which under $\tilde{\phi}$ are sent to the same point in $\hyp$, $\tilde{\phi}(x)=\tilde{\phi}(y)$. In the tree $\tilde{X}$ there is a unique path joining $x$ and $y$ and we say $d_{\tilde{X}}(x,y)$ is the length of this path where each edge $e\in\mathbf{edge}(\tilde{X})$ has length $\ell_{\hyp}(\tilde{\phi}(e))$. 
    Let $n\geq 1$ be the number of edges between $x$ and $y$, then we have:
    \begin{equation*}
        d_{\tilde{X}}(x,y)>n\ell_0 \, .
    \end{equation*}

    In the hyperbolic plane it is known that for some large enough $L>0$, a path comprised of geodesic segments each longer than $L$ which meet angles $\frac{2\pi}{3}$ is a quasi-geodesic \cite{BH99}. Thus, assuming that $\ell_0\geq L$, there exist constants $\lambda\in(0,1]$ and $C\geq 0$ such that:
    \begin{equation*}
        \begin{split}
            d_{\hyp}(\tilde{\phi}(x),\tilde{\phi}(y)) &\geq\lambda d_{\tilde{X}}(x,y)- C \\
            & >\lambda n\ell_0 - C 
        \end{split}
    \end{equation*}
    and so for large enough $\ell_0$ we must have $d_{\hyp}(\tilde{\phi}(x),\tilde{\phi}(y))>0$ which contradicts the fact that $\tilde{\phi}(x)=\tilde{\phi}(y)$. Therefore, for such an $\ell_0$, there are no non-trivial loops in the image $\tilde{\phi}(\tilde{X})$ and so the induced homomorphism $\phi_*$ is faithful.
\end{proof}

Given a trivalent topological graph $X$ and any $\ell>0$ a result of Erlandsson and Souto \cite[Lemma~4.3]{ES23-commutator} states the the proportion of $\ell$-long elements in the set $\critx(L)$ tends to 1 as $L\to\infty$. The argument naturally extends to the set $\critgraph=\sum_{X\in\typ}\critx$ of all rank $k$ critical graph maps where $\typ$ denotes the set of homeomorphism classes of connected trivalent graphs of rank $k$:
\begin{lemma}\label{lem:shortnegl}
    For any $\ell$, we have that
    \begin{equation*}
        \liminf_{L\to\infty}\frac{|\{(X,\phi)\in\critgraph(L):(X,\phi) \text{ is } \ell\text{-long}\}|}{|\critgraph(L)|}=1 \, .
    \end{equation*}
\end{lemma}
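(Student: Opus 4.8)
The plan is to reduce the claim to the single-graph statement of Erlandsson--Souto \cite[Lemma~4.3]{ES23-commutator}. Recall that $\typ$ is finite and that $\critgraph(L)$ is by definition the disjoint union $\bigsqcup_{X\in\typ}\critx(L)$. Since being $\ell$-long is a condition imposed edge-by-edge on a pair $(X,\phi)$, it is compatible with this decomposition: a map $(X,\phi)\in\critgraph(L)$ is $\ell$-long exactly when it is $\ell$-long as an element of $\critx(L)$. Setting
\[
N_X(L):=|\critx(L)|,\qquad N_X^{\ell}(L):=|\{(X,\phi)\in\critx(L): (X,\phi)\text{ is }\ell\text{-long}\}|,
\]
we have $|\critgraph(L)|=\sum_{X\in\typ}N_X(L)$ while the numerator appearing in the lemma equals $\sum_{X\in\typ}N_X^{\ell}(L)$, and clearly $0\le N_X^{\ell}(L)\le N_X(L)$ for every $X$.

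First I would apply \cite[Lemma~4.3]{ES23-commutator} to each fixed trivalent graph $X\in\typ$, obtaining $N_X^{\ell}(L)/N_X(L)\to 1$, i.e. $N_X(L)-N_X^{\ell}(L)=\varepsilon_X(L)N_X(L)$ with $\varepsilon_X(L)\to 0$ as $L\to\infty$; by Proposition~\ref{prop:countcrit} (applied with $\chi(X)=1-k$) each $N_X(L)$ grows like a positive constant times $L^{3k-4}e^{L}$, so in particular $N_X(L)>0$ for large $L$ and every denominator is eventually positive. Then I would use the elementary fact that finitely many nonnegative sequences with $N_X^{\ell}(L)/N_X(L)\to 1$ have sums with the same property: summing the bounds $0\le N_X(L)-N_X^{\ell}(L)\le\varepsilon_X(L)N_X(L)$ over $X\in\typ$ gives
\[
0\ \le\ \sum_{X\in\typ}N_X(L)-\sum_{X\in\typ}N_X^{\ell}(L)\ \le\ \Big(\max_{X\in\typ}\varepsilon_X(L)\Big)\sum_{X\in\typ}N_X(L),
\]
so $1\ge\sum_X N_X^{\ell}(L)\big/\sum_X N_X(L)\ge 1-\max_{X\in\typ}\varepsilon_X(L)\to 1$. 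Hence the ratio converges to $1$; in particular its $\liminf$ is $1$, which is the assertion. (If one prefers explicit constants: since all members of $\typ$ share Euler characteristic $1-k$, Proposition~\ref{prop:countcrit} gives $|\critgraph(L)|\sim\big(\sum_X c_X\big)L^{3k-4}e^{L}$ and $N_X^{\ell}(L)\sim c_X L^{3k-4}e^{L}$, and the same conclusion follows by adding the asymptotics.)

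I do not expect a genuine obstacle here: the substantive content is \cite[Lemma~4.3]{ES23-commutator} itself (a Delsarte-type lattice-point and equidistribution estimate), which the hypotheses allow us to quote, and the passage from a single topological type to the finite family $\typ$ is purely bookkeeping. The only points deserving a line of justification are that the $\ell$-long condition respects the partition of $\critgraph$ into the pieces $\critx$, and that at least one $N_X(L)$ (hence the total) is eventually positive so the ratios are defined --- both immediate. Alternatively, one could re-run the Erlandsson--Souto argument verbatim allowing $X$ to range over $\typ$ from the outset, as nothing in it uses that the topological type is held fixed.
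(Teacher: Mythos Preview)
Your proposal is correct and is exactly the approach the paper indicates: the paper does not give a proof of this lemma at all, merely remarking that \cite[Lemma~4.3]{ES23-commutator} gives the single-graph statement and that ``the argument naturally extends'' to $\critgraph=\bigsqcup_{X\in\typ}\critx$. What you have written is precisely that natural extension spelled out --- decompose over the finitely many $X\in\typ$, apply the cited lemma to each piece, and combine using the elementary fact about finite sums of nonnegative sequences.
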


Therefore generically most critical graphs have long edges, and thus as $L\to\infty$ the number of critical graph maps which are not carrier graphs becomes negligible.

Our aim now is to show that generically there is a one-to-one relationship between conjugacy classes of subgroups and automorphism classes of critical graph maps. 

\begin{prop}\label{prop:uniquecrit}
    There exists $\ell_0=\ell_0(\Sigma,k)>0$ such if a element $\Delta\in\surfsubgrp$ is carried by an $\ell_0$-long critical graph $(X, \phi)$ then all other $\ell_0$-long critical graphs carrying $\Delta$ differ only by composition with a homeomorphism of the graph $X$. 
\end{prop}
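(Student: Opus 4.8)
The plan is to suppose that $\Delta\in\surfsubgrp$ is carried by two $\ell_0$-long critical graphs $(X,\phi)$ and $(Y,\psi)$ and to show that, once $\ell_0$ is large enough, these differ only by precomposition with a homeomorphism $X\to Y$ (in particular $X$ and $Y$ are homeomorphic). The key geometric input is the quasi-geodesic estimate already used in the proof of Lemma \ref{lem:longcritcarry}: if $\ell_0$ exceeds the constant $L$ from \cite{BH99}, then the lift $\tilde\phi:\tilde X\to\hyp$ of a critical $\ell_0$-long graph map sends edge-paths to $(\lambda,C)$-quasi-geodesics, and hence $\tilde\phi$ is a quasi-isometric embedding of the tree $\tilde X$ into $\hyp$. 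Consequently $\phi_*$ identifies $\gfund$ with a quasiconvex subgroup $\Delta\subset\fund$ whose limit set $\Lambda(\Delta)\subset\partial\hyp$ is exactly the image $\partial\tilde\phi(\partial\tilde X)$ of the Gromov boundary of the tree, and the map $\partial\tilde\phi:\partial\tilde X\to\Lambda(\Delta)$ is a $\Delta$-equivariant homeomorphism. The same holds for $(Y,\psi)$, so we obtain a $\Delta$-equivariant homeomorphism $h=\partial\tilde\psi^{-1}\circ\partial\tilde\phi:\partial\tilde X\to\partial\tilde Y$ between the boundaries of the two trees.

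Next I would upgrade this boundary identification to a tree isomorphism. A first, purely topological, observation is that a trivalent tree with no valence-one or valence-two vertices is determined up to isomorphism by its boundary together with the $\Delta$-action — more precisely, $\tilde X$ is the canonical "tree of the $\Delta$-invariant pattern of branch points" inside $\partial\tilde X$, and a $\Delta$-equivariant homeomorphism of boundaries carries this structure over. Passing to the quotient, $h$ descends to a homotopy equivalence $X\to Y$; since both are finite trivalent graphs of the same rank $k$ realizing the same $\Delta$ as quasiconvex subgroup, $h$ is realized by a homeomorphism of graphs (here one uses that a homotopy equivalence between finite graphs without valence-$\le 2$ vertices which is compatible with the given $\pi_1$-identifications is homotopic to a homeomorphism — this is where the trivalence hypothesis is essential, ruling out the collapsing/expanding of edges). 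Call this homeomorphism $A:X\to Y$.

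It remains to see that $\psi\circ A$ and $\phi$ are not merely homotopic but equal — i.e. that the \emph{geodesic} realizations coincide. This is the rigidity step: $\phi$ and $\psi\circ A$ are two critical graph maps of the same underlying graph $X$ in the homotopy class determined by $\Delta$, so on the level of universal covers $\tilde\phi$ and $\widetilde{\psi\circ A}$ are both $\Delta$-equivariant maps sending each edge to a geodesic segment and meeting vertices at angle $\tfrac{2\pi}{3}$, and they induce the \emph{same} boundary map (after identifying $\partial\tilde X$ with $\partial\tilde Y$ via $\partial\tilde A$). I would argue that a $\Delta$-equivariant criticality condition pins down the placement of the vertices: each vertex $v\in\mathbf{vert}(\tilde X)$ together with its three incident edge-directions is determined by the three boundary points of $\partial\tilde X$ cut out by the three components of $\tilde X\setminus\{v\}$, because in $\hyp$ there is a \emph{unique} point from which three given distinct ideal points subtend pairwise angles $\tfrac{2\pi}{3}$ — equivalently the unique point minimizing a suitable sum of Busemann functions. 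Since $\tilde\phi(v)$ and $\widetilde{\psi\circ A}(v)$ must both be this point, the two maps agree on vertices, hence (by geodesy of edges) everywhere, hence $\phi=\psi\circ A$.

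The main obstacle I expect is the middle step: producing the \emph{homeomorphism} (not just homotopy equivalence) $A:X\to Y$ and controlling how $\ell_0$ must be chosen — one needs $\ell_0$ large enough that the quasi-isometry constants $(\lambda,C)$ force the combinatorics of $\tilde X$ and $\tilde Y$ to match, with no short "accidental" edges that could be collapsed. A clean way to handle this uniformly over the finitely many topological types in $\typ$ is to note that $\chi(X)=\chi(Y)=1-k$ bounds the number of edges, so an $\ell_0$-long critical graph carrying a given $\Delta$ has a priori bounded combinatorial size, and then a compactness/finiteness argument over $\typ$ yields a single $\ell_0=\ell_0(\Sigma,k)$ working in all cases; the delicate point is making the "bounded size forces unique combinatorial type" assertion precise using the quasiconvexity constants of $\Delta$ coming from Lemma \ref{lem:longcritcarry}.
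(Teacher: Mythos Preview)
Your overall strategy --- lift to covers, produce a homotopy equivalence, upgrade it to a homeomorphism, then prove rigidity of the critical realization --- mirrors the paper's, but two of your three steps contain genuine gaps.

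First, the parenthetical claim in your middle step that ``a homotopy equivalence between finite graphs without valence-$\le 2$ vertices which is compatible with the given $\pi_1$-identifications is homotopic to a homeomorphism'' is simply false: the theta graph and the dumbbell graph are both trivalent rank-$2$ graphs, are homotopy equivalent, and are not homeomorphic. Trivalence alone does not rule out collapsing or expanding edges. Something more is needed --- namely the $\ell_0$-long and critical hypotheses on \emph{both} maps --- and this is exactly what the paper outsources to \cite[Proposition~2.3]{ES23-commutator}: given the homotopy equivalence $\sigma:X\to Y$ coming from the common subgroup $\Delta$, and assuming the longer of the two critical carriers is $\ell_0$-long, that result produces a constant-speed homeomorphism $F:Y\to X$ with $F\circ\sigma\simeq\mathrm{id}$. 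You correctly flag this step as the main obstacle, but the compactness/finiteness sketch you offer does not explain why the combinatorial types must agree.

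Second, your rigidity argument does not work as written. The three components of $\tilde X\setminus\{v\}$ determine three \emph{subsets} of $\partial\tilde X$, each infinite --- not three ideal points --- so there are no ``three given distinct ideal points'' from which to locate a Fermat/Steiner point at angles $\tfrac{2\pi}{3}$, and a trivalent tree has no canonical way to single out one boundary point per half-edge. The paper's argument here avoids boundaries entirely and is much cleaner: once $\psi$ and $\phi\circ F$ are known to be \emph{homotopic} critical maps $Y\to\Sigma$, both are local minima of $\ell_\Sigma$ on their homotopy class, while convexity of $d_{\hyp}$ makes $\ell_\Sigma$ convex on the space of geodesic maps in that class; a convex function has a unique critical point, so $\psi=\phi\circ F$.
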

\begin{proof}
Fix some $\Delta\in\surfsubgrp$ and let $(X,\phi)$ and $(Y,\psi)$ be $\ell_0$-long critical graphs which carry $\Delta$ where $\ell_0$ is large enough to satisfy conditions given later in this proof.

Suppose, without loss of generality, that $\ell_{\Sigma}(X,\phi)\geq\ell_\Sigma(Y,\psi)$. For the graph $X$ pick a basepoint $x_0\in X$ and let $\phi(x_0)\in \Sigma$ be a basepoint for $\Sigma$. By slightly abusing notation, we allow $\Delta$ to denote the subgroup $\phi_*(\pi_1(X,x_0))\subset \pi_1(\Sigma, \phi(x_0))$ as a representative in its conjugacy class. This gives us a cover $p:\hyp/\Delta\rightarrow\Sigma$ and lifts of $\phi$ and $\psi$ to maps $\hat{\phi}:X\rightarrow \hyp/\Delta$ and $\hat{\psi}:Y\rightarrow \hyp/\Delta$. We then have an isomorphism $\hat{\psi}_*^{-1}\circ \hat{\phi}_*: \pi_1(X)\rightarrow\pi_1(Y)$ and thus there exists a (free) homotopy equivalence $\sigma:X\rightarrow Y$ with $\hat{\phi}\simeq\hat{\psi}\circ\sigma$ which descends to a homotopy equivalence $\phi\simeq\psi\circ\sigma$.

Using a result of Erlandsson and Souto \cite[Proposition 2.3]{ES23-commutator} then gives us that for $\ell_0$ large enough there exists a homeomorphism $F:Y\rightarrow X$ mapping each edge with
constant speed such that $F\circ \sigma$ is homotopic to the identity. Therefore $\psi$ and $\phi\circ F$ are homotopic critical graph maps from $Y$ to $\Sigma$. The argument preceding Lemma \ref{lem:minarecrit} shows that any small deformation of a critical graph must increase its length, thus both $\psi$ and $\phi\circ F$ are local minima of the length function. However, convexity of the distance function $d_{\hyp}(\cdot,\cdot)$ implies that the length function $\ell_\Sigma$ is convex on the space of geodesic carrier graphs $Y\rightarrow\Sigma$ homotopic to $\psi$. Thus there exists a unique minimum for the length function on this space and therefore we have $\psi=\phi\circ F$.
\end{proof}

\begin{coroll}\label{coroll:nummincar}
There exists $\ell_0=\ell_0(\Sigma,k)>0$ such if a element $\Delta\in\surfsubgrp$ has an $\ell_0$-long minimal carrier graph $(X,\phi)$ then $\Delta$ has exactly $\left|\gaut\right|$ minimal carrier graphs, where $\gaut$ is the set of automorphisms of the graph $X$.
\end{coroll}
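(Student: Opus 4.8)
The plan is to deduce Corollary \ref{coroll:nummincar} from Proposition \ref{prop:uniquecrit} together with Lemma \ref{lem:minarecrit}, keeping careful track of the two ways a minimal carrier graph can fail to be unique: a genuinely different underlying topological graph, or the same graph precomposed with a nontrivial self-homeomorphism. First I would fix $\ell_0=\ell_0(\Sigma,k)$ large enough that Proposition \ref{prop:uniquecrit} applies, and suppose $\Delta\in\surfsubgrp$ has an $\ell_0$-long minimal carrier graph $(X,\phi)$. By Lemma \ref{lem:minarecrit} every minimal carrier graph of $\Delta$ is critical; moreover, any minimal carrier graph $(Y,\psi)$ has $\ell_\Sigma(Y,\psi)=\ell(\Delta)=\ell_\Sigma(X,\phi)$, so its edge lengths sum to the same total and, since $X$ has finitely many edges each of length $>\ell_0$, one checks that $(Y,\psi)$ is $\ell_0'$-long for a slightly smaller $\ell_0'$ — or, more simply, I would note that minimality forces the same edge-length data up to the combinatorics, so after possibly enlarging $\ell_0$ from the start we may assume every minimal carrier graph of $\Delta$ is $\ell_0$-long. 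Hence Proposition \ref{prop:uniquecrit} applies to any pair of minimal carrier graphs of $\Delta$.

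Next I would extract the counting consequence. Proposition \ref{prop:uniquecrit} says that if $(Y,\psi)$ is another $\ell_0$-long critical graph carrying $\Delta$, then it differs from $(X,\phi)$ by composition with a homeomorphism $Y\to X$; in particular $Y$ is homeomorphic to $X$, so we may take $Y=X$. Thus the set of minimal carrier graphs of $\Delta$ is exactly
\begin{equation*}
    \{(X,\phi\circ A)\mid A\in\gaut\},
\end{equation*}
where $\gaut$ denotes the group of self-homeomorphisms (automorphisms) of the topological graph $X$. I would then verify that the map $A\mapsto(X,\phi\circ A)$ is a bijection onto this set. Surjectivity is precisely the content of Proposition \ref{prop:uniquecrit} (every minimal, hence $\ell_0$-long critical, carrier graph of $\Delta$ arises this way). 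For injectivity: if $\phi\circ A_1=\phi\circ A_2$ as maps $X\to\Sigma$, then $\phi\circ(A_1A_2^{-1})=\phi$; since $\phi$ restricted to each edge is a non-degenerate geodesic (an embedding of an interval, by criticality and $\ell_0$-longness), $\phi$ is locally injective, and an automorphism $A_1A_2^{-1}$ satisfying $\phi\circ(A_1A_2^{-1})=\phi$ must therefore be the identity on $X$. Since $X$ is trivalent and $\ell_0$-long, no edge of $\phi(X)$ can backtrack or degenerate, which rules out the automorphism swapping the two half-edges of a loop-edge incident at a single vertex; so indeed $A_1=A_2$. This gives $|\{\text{minimal carrier graphs of }\Delta\}|=|\gaut|$.

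The main obstacle I anticipate is the bookkeeping around "$\ell_0$-long": I must ensure that \emph{every} minimal carrier graph of $\Delta$ — not just the given one $(X,\phi)$ — is $\ell_0$-long so that Proposition \ref{prop:uniquecrit} is applicable to it. The cleanest route is to observe that minimality pins down the total length $\ell(\Delta)$ and that, once one minimal carrier graph is $\ell_0$-long, the homotopy-equivalence/homeomorphism produced in the proof of Proposition \ref{prop:uniquecrit} maps edges with constant speed, forcing the edge lengths of any competing minimal carrier graph to match those of $(X,\phi)$ up to the finite combinatorial data; in particular all its edges are also long. If one prefers to avoid this, one can instead state the corollary with the hypothesis that \emph{all} minimal carrier graphs are $\ell_0$-long, but the phrasing above (a single $\ell_0$-long minimal carrier graph) is recoverable by taking $\ell_0$ in this corollary to be, say, twice the $\ell_0$ of Proposition \ref{prop:uniquecrit}: then a minimal carrier graph with all edges $>2\ell_0$ forces every other minimal carrier graph (same total length, same number of edges) to have all edges $>\ell_0$. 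A secondary, minor point is the injectivity argument's edge case for loop-edges; this is handled by the non-degeneracy clause in Definition \ref{def:critmap} and the $\ell_0$-longness, which together prevent an edge from being folded onto itself.
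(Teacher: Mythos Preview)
There is a genuine gap in your treatment of the ``$\ell_0$-long'' bookkeeping, and neither of your proposed fixes works. Your first suggestion --- that minimality forces the edge-length data of $(Y,\psi)$ to match that of $(X,\phi)$ --- is circular: the matching of edge lengths is a \emph{consequence} of the constant-speed homeomorphism $F:Y\to X$ produced by Proposition~\ref{prop:uniquecrit}, which is precisely what you are trying to invoke. Your second suggestion --- take the corollary's $\ell_0$ to be twice that of Proposition~\ref{prop:uniquecrit}, so that the total length exceeds $2(3k-3)\ell_0$ --- does not force the individual edges of $(Y,\psi)$ to be long: a trivalent graph with the same total length and the same number of edges can still have arbitrarily short edges (one very long edge absorbing the excess). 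So from the hypotheses alone you cannot conclude that a second minimal carrier graph $(Y,\psi)$ is $\ell_0$-long, and Proposition~\ref{prop:uniquecrit} as stated does not directly apply.

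The paper resolves this differently: rather than establishing that $(Y,\psi)$ is $\ell_0$-long, it revisits the \emph{proof} of Proposition~\ref{prop:uniquecrit} and observes that the cited result \cite[Proposition~2.3]{ES23-commutator} producing the homeomorphism $F:Y\to X$ only requires the \emph{longer} of the two critical graphs to be $\ell_0$-long. Since $(X,\phi)$ and $(Y,\psi)$ are both minimal they have equal length, so one may take the given $\ell_0$-long $(X,\phi)$ as ``the longer one'' and the argument goes through verbatim, yielding $\psi=\phi\circ F$ with $F$ a graph automorphism. This is the missing idea. Once you have it, the counting is immediate as you outline (and the paper does not dwell on the injectivity of $A\mapsto\phi\circ A$ beyond the remark preceding Lemma~\ref{lem:minarecrit}).
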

\begin{proof}
    Take the $\ell_0$ given by Proposition \ref{prop:uniquecrit} and let $(X,\phi)$ be an $\ell_0$-long minimal carrier graph of $\Delta\in\surfsubgrp$. Suppose $(Y, \psi)$ is also a minimal carrier graph. Lemma \ref{lem:minarecrit} tells us that both $(X,\phi)$ and $(Y, \psi)$ are critical. 

    In the proof of the above proposition, the result used to find the homeomorphism $F:Y\rightarrow X$ only requires that the longer of the two critical graphs is $\ell_0$-long. As both $(X,\phi)$ and $(Y, \psi)$ are minimal we have $\ell(X,\phi)=\ell(Y,\psi)$, thus the proof of Proposition \ref{prop:uniquecrit} still holds and we have that $\psi =\phi\circ F$ for some homeomorphism $F:Y\rightarrow X$ mapping edges with constant speed. Viewing $X$ as a homeomorphism class of graphs means that $Y=X$ and $F$ can be viewed as a graph automorphism $F:X\rightarrow X$.
    
    Moreover, as discussed above, any automorphism $F\in\gaut$ gives a distinct minimal carrier graph $(X,\phi\circ F)$ of $\Delta$, thus the number of minimal carriers is $\left|\gaut\right|$.
\end{proof}

We now have the results we need to prove our theorem:
\begin{thm:countsubgroup}
    For a closed, orientable hyperbolic surface $\Sigma$ of genus $g$ and some $k\geq 2$, the cardinality of the set $\surfsubgrp(L)$ has asymptotic growth:
    \begin{equation*}
        \left|\surfsubgrp(L)\right|\sim \sum_{X\in\typ}\frac{1}{\left|\gaut\right|} \cdot\left(\frac{4}{3}\right)^{3-3k}\cdot\frac{(\pi^2(g-1))^{1-k}}{(3k-4)!}\cdot L^{3k-4}\cdot e^L
    \end{equation*}
    as $L\to\infty$.
\end{thm:countsubgroup}
\begin{proof}
    By definition any subgroup $\Delta\in\surfsubgrp(L)$ is carried by a minimal carrier graph of length less than or equal to $L$. Lemma \ref{lem:minarecrit} tell us that this minimal carrier graph is critical, thus $\Delta$ has at least one carrier graph in the set $\critgraph(L)$ of rank $k$ critical graph maps of length less than or equal to $L$. 

    Combining Lemma \ref{lem:longcritcarry} and Proposition \ref{prop:uniquecrit} we get that there exists $\ell_0$ such that any $\ell_0$-long critical graph $(X,\phi)\in\critgraph(L)$ is a carrier graph of a subgroup $\Delta\in\surfsubgrp(L)$ and that the number of $\ell_0$-long critical carriers of $\Delta$ is exactly $|\gaut|$. Then from Lemma \ref{lem:shortnegl} we get that the number of subgroups $\Delta\in\surfsubgrp(L)$ carried by critical maps which are not $\ell_0$-long becomes negligible as $L\to\infty$. Thus it follows that:  
    \begin{align*}
        |\surfsubgrp(L)|&\sim\sum_{X\in\typ}\frac{|\critx(L)|}{\left|\gaut\right|} \\
        &\sim \sum_{X\in\typ}\frac{1}{\left|\gaut\right|}\cdot \left(\frac{2}{3}\right)^{3\chi(X)}\cdot\frac{\text{vol}(T^1\Sigma)^{\chi(X)}}{(-3\chi(X)-1)!}\cdot L^{-3\chi(X)-1}\cdot e^L
    \end{align*}
    as $L\to\infty$, where we have used Proposition \ref{prop:countcrit} to give asymptotic growth of the number of critical graph maps. Once we note that any rank $k$, trivalent graph $X$ has Euler-characteristic $\chi(X)=1-k$ and any genus $g$ surface $\Sigma$ has $\text{vol}(T^1\Sigma)=2\pi\cdot\text{vol}(\Sigma)=8\pi^2(g-1)$ the result follows directly.
\end{proof}

\section{Convergence of probability measures}\label{sec:measures}
As mentioned in the introduction we can define, for every length $L>0$, a probability measure:
\begin{equation*}
    \measl := \frac{1}{|\surfsubgrp(L)|}\sum_{\Delta\in\surfsubgrp(L)}\delta_{\Delta}
\end{equation*}
on the space $\hsubgrps$ as defined in Section \ref{sec:spaces} where $\delta_{\Delta}$ is the Dirac measure. The measure is supported on the elements of $\surfsubgrp(L)$ and is simply a counting measure normalised to be a probability measure. 

Recall that the space $\hsubgrps$ is the quotient of $\hreps$, a locally compact Hausdorff space (Lemma \ref{lem:localcompact}), by a properly discontinuous action of $\out$ and therefore $\hsubgrps$ is itself locally compact and Hausdorff. Let $\mathbf{M}(\hsubgrps)$ be the set of Radon measures on the space $\hsubgrps$. We endow $\mathbf{M}(\hsubgrps)$ with the weak-*-topology which is the weakest topology such that the map
\begin{equation*}
    \mathbf{M}(\hsubgrps) \rightarrow\R  , \quad \mu \mapsto \int f d\mu
\end{equation*}
is continuous for all compactly supported continuous functions $f\in C_c(\hsubgrps)$.

We can now restate our main theorem:
\begin{thm:limitmeasure}
    The measures $\measl$ on $\hsubgrps$ converge with respect to the weak-*-topology to a limit measure
    \begin{equation*}
        \meas= \lim_{L\to\infty}\measl
    \end{equation*}
    supported on the moduli space of (volume one) metric graphs $\gmoduli$. This limit is a probability measure of full support in the Lebesgue class which is moreover independent of the surface $\Sigma$. 
\end{thm:limitmeasure}

As we mentioned previously, the limiting measure $\meas$ is explicit, see (\ref{eq:limitmeas}) below. To be able to describe it recall that the moduli space $\gmoduli$ of (volume one) rank $k$ metric graphs is the finite quotient of the action of $\out$ on Outer space $\cv$, the space of (volume one) marked rank $k$ metric graphs. Outer space can be described as a union of open simplices with face identifications, where each simplex corresponds to edge lengths of a marked graph and faces are identified when edges in a graph collapse. The action of $\out$ on $\cv$ is properly discontinuous and sends simplices to simplices where the stabiliser of a point in Outer space is isomorphic to the group of isometries of the metric graph \cite{Vogt15}. 

Moduli space $\gmoduli$ is the quotient space of the action of $\out$ on $\cv$. The simplicial structure of $\cv$ descends to a finite orbifold simplicial structure, where each topological graph has a single `folded' simplex corresponding to its possible volume one metrics. Explicitly, for every rank $k$ graph $X$ we have a corresponding space 
\begin{equation*}
    S_X:=\{\vec{x}\in \R^{\mathbf{edges}(X)}_+ : ||\vec{x}||=1\}/\sim
\end{equation*}
sitting inside $\gmoduli$ where $||\vec{x}||=x_1+\dots+x_E$ is the $L^1$-norm and $\vec{x}\sim\vec{y}$ if there exists a graph automorphism $A\in\gaut$ which acts on edges such that $A\vec{x}=\vec{y}$. Note that some graph automorphisms may act trivially on the edge lengths of a graph and we denote this subgroup: 
\begin{equation}\label{def:triv}
    \operatorname{Triv}(X):=\left\{A\in\gaut \mid A\vec{x}=\vec{x} \text{ for all }\vec{x}\in \R^{\mathbf{edges}(X)}_+\right\}  . 
\end{equation}

The top-dimensional simplices correspond to trivalent graphs and are $(3k-4)$-dimensional. For each trivalent graph $X\in\typ$ we will define a Lebesgue-type measure $\sigma_X$ on $S_X$ as the limit of a family of integer lattice point counting measures:
\begin{equation}\label{def:sigma-meas}
    \sigma_X = \lim_{N\to\infty}\frac{1}{|\{\vec{n}\in\N^{3k-3} : ||\vec{n}||=N\}|}\sum_{\substack{[\vec{x}]\in S_X \\ N\vec{x}\in\N^{3k-3}}} \delta_{[\vec{x}]} 
\end{equation} 
where $3k-3$ is the number of edges of a trivalent graph of rank $k$.
Note that for any $N\in\N$:
\begin{equation*}
    |\{[\vec{x}]\in S_X : N\vec{x}\in\N^{3k-3}\}|=\frac{|\operatorname{Triv}(X)|}{|\gaut|}|\{\vec{n}\in\N^{3k-3} : ||\vec{n}||=N\}|
\end{equation*}
and thus $\operatorname{vol}_{\sigma_X}(S_X)=\frac{|\operatorname{Triv}(X)|}{|\gaut|}$.

We will see that the limit measure in Theorem \ref{thm:limitmeasure} is:
\begin{equation}\label{eq:limitmeas}
    \meas = \frac{1}{\sum_{X\in\typ}\frac{1}{|\gaut|}} \cdot \sum_{X\in\typ}\frac{1}{|\gtriv|} \sigma_{X} \, .
\end{equation}

Before we give the proof of Theorem \ref{thm:limitmeasure} we first must introduce a new family of measures living on $\gmoduli$, the moduli space of metric graphs.

Recall from Corollary \ref{coroll:nummincar} that there exists $\ell_0>0$ such that if an element $\Delta\in\surfsubgrp$ is carried by an $\ell_0$-long minimal carrier graph $(X,\phi)$ then its only other minimal carrier graphs are obtained by graph automorphisms of $(X,\phi)$. Specifically, such a $\Delta$ has a unique underlying metric graph associated to it via its minimal carrier graphs. 
Fix this $\ell_0$ and denote by $\surfsubgrp^{\ell_0}$ the subset of $\surfsubgrp$ consisting of those subgroups with an $\ell_0$-long minimal carrier graph. We can define a projection $p:\surfsubgrp^{\ell_0}\rightarrow\gmoduli$ from the set $\surfsubgrp^{\ell_0}$ to the moduli space $\gmoduli$ such that any $\Delta\in\surfsubgrp^{\ell_0}$ is mapped to the unique (projectivised) metric graph corresponding to any of its minimal carrier graphs.

We can thus define a family of probability measures lying on the moduli space $\gmoduli$ given by:
\begin{equation*}
    \gmeasl=\frac{1}{|\surfsubgrp^{\ell_0}(L)|}\sum_{\Delta\in\surfsubgrp^{\ell_0}(L)}\delta_{p(\Delta)}
\end{equation*}
for every $L>0$, where $\delta_{p(\Delta)}$ is the Dirac measure centred at $p(\Delta)$.

\begin{prop}\label{prop:limitmod}
    The family of measures $\gmeasl$ on $\gmoduli$ converge in the weak-*-topology to the probability measure $\meas$ as $L\to\infty$.
\end{prop}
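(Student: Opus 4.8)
The plan is to reduce the asserted weak-$*$ convergence on the compact space $\gmoduli$ to the convergence of the masses $\gmeasl(A)$ for a sufficiently rich family of Borel sets, and to compute those masses by converting the count of subgroups into a count of critical graph maps, to which the Erlandsson--Souto equidistribution theorem \cite{ES23-commutator} applies. First I would observe that, since every minimal carrier graph is critical (Lemma~\ref{lem:minarecrit}) and hence trivalent with non-degenerate edges, the projection $p$ sends $\surfsubgrp^{\ell_0}$ into the union $\bigsqcup_{X\in\typ}\operatorname{int}(S_X)$ of the interiors of the top-dimensional simplices of $\gmoduli$; the complementary skeleton is closed of strictly lower dimension, hence $\meas$-null and $\gmeasl$-null for every $L$. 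As $\gmoduli$ is compact it therefore suffices to prove $\gmeasl(A)\to\meas(A)$ for every relatively open $A\subset\operatorname{int}(S_X)$ (with $X\in\typ$ trivalent) whose boundary is $\sigma_X$-null; weak-$*$ convergence then follows by sandwiching a continuous $f\in C(\gmoduli)$ between step functions built from such continuity sets inside each open simplex, all the measures involved being probabilities supported off the skeleton.

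\emph{Converting subgroups to maps.} Fix such $X$ and $A$. By Corollary~\ref{coroll:nummincar}, each $\Delta\in\surfsubgrp^{\ell_0}(L)$ with $p(\Delta)\in A$ has exactly $|\gaut|$ minimal carrier graphs; the argument of that corollary shows they are all $\ell_0$-long, all of topological type $X$, and all project to $p(\Delta)\in A$. Conversely, an $\ell_0$-long critical graph map $\phi\colon X\to\Sigma$ of length at most $L$ whose class in $S_X$ lies in $A$ and which is moreover a minimal carrier of the subgroup it carries contributes one such $\Delta$. Hence
\[
|\gaut|\cdot|\surfsubgrp^{\ell_0}(L)|\cdot\gmeasl(A)
=\Bigl|\bigl\{\phi\in\critx(L):\ \phi\ \ell_0\text{-long, minimal for its own subgroup, and }[\vec{\ell}(\phi)]\in A\bigr\}\Bigr|.
\]
The remaining $\ell_0$-long critical maps of type $X$ are negligible: if such a $\phi$ fails to be minimal for the subgroup $\Delta_\phi$ it carries, then $\Delta_\phi$ has a strictly shorter minimal carrier, which by Proposition~\ref{prop:uniquecrit} cannot be $\ell_0$-long (otherwise it would differ from $\phi$ by a graph homeomorphism and so have the same length), so $\Delta_\phi\notin\surfsubgrp^{\ell_0}$; since this minimal carrier determines $\Delta_\phi$ and $\phi$ is one of at most $|\gaut|$ $\ell_0$-long critical carriers of $\Delta_\phi$, the number of such $\phi$ is $o(L^{3k-4}e^L)$ by Lemma~\ref{lem:shortnegl}. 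The same lemma, with Lemma~\ref{lem:longcritcarry}, also shows that the non-$\ell_0$-long critical maps of type $X$ number $o(L^{3k-4}e^L)$, so on the right-hand side above I may drop the two qualifiers at the cost of an $o(L^{3k-4}e^L)$ error, and also that $|\surfsubgrp^{\ell_0}(L)|\sim|\surfsubgrp(L)|$.

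\emph{Asymptotics and conclusion.} The Erlandsson--Souto equidistribution theorem says the critical graph maps of $X$ equidistribute over the simplex of projectivised edge lengths; pushing this forward to $S_X$ and matching the normalisation via the lattice-point identity following~(\ref{def:sigma-meas}) gives $\bigl|\{\phi\in\critx(L):[\vec{\ell}(\phi)]\in A\}\bigr|\sim\frac{|\gaut|}{|\gtriv|}\,\sigma_X(A)\,|\critx(L)|$. Since every trivalent rank $k$ graph has Euler characteristic $1-k$, Proposition~\ref{prop:countcrit} gives $|\critx(L)|\sim c_k\,L^{3k-4}e^L$ for a constant $c_k>0$ independent of $X$, and then the proof of Theorem~\ref{thm:countingsubgrps} gives $|\surfsubgrp^{\ell_0}(L)|\sim|\surfsubgrp(L)|\sim c_k\,L^{3k-4}e^L\sum_{Y\in\typ}\frac{1}{|\operatorname{Aut}(Y)|}$. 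Substituting all of this into the displayed identity and cancelling $c_k L^{3k-4}e^L$ yields
\[
\gmeasl(A)\ \longrightarrow\ \frac{\sigma_X(A)/|\gtriv|}{\sum_{Y\in\typ}1/|\operatorname{Aut}(Y)|}=\meas(A)
\]
by~(\ref{eq:limitmeas}), since only the $X$-summand of $\meas$ charges $\operatorname{int}(S_X)$. The reduction of the first paragraph then upgrades this to the weak-$*$ convergence $\gmeasl\to\meas$.

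\emph{Main obstacle.} I expect the real difficulty to lie in the second step: passing from equidistribution of critical graph maps of a fixed topological type to equidistribution of subgroups through their minimal carriers, i.e.\ showing that the critical maps of type $X$ that are \emph{not} the minimal carrier of the subgroup they carry form a vanishing proportion. This requires orchestrating Proposition~\ref{prop:uniquecrit}, Corollary~\ref{coroll:nummincar}, Lemma~\ref{lem:longcritcarry} and Lemma~\ref{lem:shortnegl} at once and checking that all the error terms are genuinely $o(L^{3k-4}e^L)$; a secondary bookkeeping nuisance is the distinction between counting maps and counting subgroups, and in particular tracking the respective roles of $\gaut$, $\gtriv$, and the normalisation of $\sigma_X$.
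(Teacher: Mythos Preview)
Your proposal is correct and follows essentially the same strategy as the paper: pass from subgroups to critical maps via Corollary~\ref{coroll:nummincar}, use the Erlandsson--Souto asymptotics to see that critical maps of each type $X$ equidistribute over $S_X$, and assemble the contributions from different $X\in\typ$ via Theorem~\ref{thm:countingsubgrps}. The paper organises this slightly differently---first proving a per-type claim that $\gmeaslx\to\frac{|\gaut|}{|\gtriv|}\sigma_X$ and then combining---and, more substantively, it re-derives the equidistribution from the box-counting estimate \cite[Proposition~4.2]{ES23-commutator} rather than citing an equidistribution statement as a black box. You are in fact more careful than the paper on one point: you argue explicitly (via Proposition~\ref{prop:uniquecrit} and Lemma~\ref{lem:shortnegl}) that $\ell_0$-long critical maps which fail to be \emph{minimal} carriers of the subgroup they carry form a negligible set, whereas the paper's displayed identity $\gmeaslx(A)=|\critAl(L)|/|\critx^{\ell_0}(L)|$ silently absorbs this error. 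One minor slip: $\gmoduli$ is not compact (a loop edge may shrink to zero and exit $\gmoduli$), but your reduction only really uses Portmanteau for probability measures together with the fact that all the measures involved vanish off the interiors of the top-dimensional simplices, so this is harmless.
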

\begin{proof}
    Given a trivalent graph $X$ with Euler-characteristic $\chi(X)=1-k$ denote by $\surfsubgrpx^{\ell_0}(L)$ the set of elements of $\surfsubgrp(L)$ with an $\ell_0$-long minimal carrier graph of type $X$. We then have that $\surfsubgrp^{\ell_0}(L)=\sum_{X\in\typ}\surfsubgrpx^{\ell_0}(L)$.

    \begin{claim}
        Fixing some trivalent $X$ then on the space $\gmoduli$ we have weak-*-convergence: 
        \begin{equation*}
        \gmeaslx:=\frac{1}{|\surfsubgrpx^{\ell_0}(L)|}\sum_{\Delta\in\surfsubgrpx^{\ell_0}(L)}\delta_{p(\Delta)}\to \frac{|\gaut|}{|\gtriv|}\sigma_{X}
    \end{equation*}
    as $L\to\infty$.
    \end{claim}
    \begin{proof}[Proof of Claim]
        Denote $\gmeasx:=\frac{|\gaut|}{|\gtriv|}\sigma_X$ and let $A\subset \gmoduli$ be a measurable subset with $\meas(\partial A)=0$. We will show that $\lim_{L\to\infty}\gmeaslx(A)=\gmeasx(A)$.

        Define a map $\pi:\critx^{\ell_0}\rightarrow S_X$ by $\pi(X,\phi)=\left[\frac{\ell_{\Sigma}(\phi(e_i))}{\ell_\Sigma(X,\phi)}\right]$. Corollary \ref{coroll:nummincar} tells us that for any $\Delta\in\surfsubgrpx^{\ell_0}$ there are $\left|\gaut\right|$ corresponding minimal carrier graphs in $\critx^{\ell_0}$ and these are such that $\pi(X,\phi)=p(\Delta)$. Thus we have:
        \begin{equation*}
            \begin{split}
                \gmeaslx(A)&=\frac{\left|\{\Delta\in\surfsubgrpx^{\ell_0}(L) : p(\Delta)\in A\}\right|}{|\surfsubgrpx^{\ell_0}(L)|} \\
                &=\frac{\left|\{(X,\phi)\in\critx^{\ell_0}(L): \pi(X,\phi)\in A\}\right|}{|\critx^{\ell_0}(L)|} \, .
            \end{split}
        \end{equation*}
        Denote by $\critAl(L) :=\{(X,\phi)\in\critx^{\ell_0}(L): \pi(X,\phi)\in A\}$ the set of $\ell_0$-long critical maps with length at most $L$ which project into the subset $A$.
        
        The cardinality of $\critAl(L)$ can be approximated by the sum of the number of critical maps in edge-length boxes with corners which project into $A$. Explicitly, for $h>0$ and $\vec{L}\in \R_+^{\mathbf{edge}(X)}$, let $\critx^{\ell_0}(\vec{L},h)$ denote the set of $\ell_0$-long critical maps $\phi:X\rightarrow\Sigma$ with $\ell_\Sigma(\phi(e))\in (L_e,L_e+h]$ for all $e\in\mathbf{edge}(X)$. For any $N\in\Z_{\geq0}$ let $\Omega(N):=\{\vec{n}\in\Z_{\geq0}^{3k-3} : ||\vec{n}||=N\}$ and $\Omega_A(N):=\{\vec{n}\in\Omega(N):\left[\frac{\vec{n}}{N}\right]\in A\}$. Then we have:
        \begin{equation*}
            \left|\critAl(Nh)\right| =\sum_{K=0}^N \,\sum_{\vec{n}\in\Omega_A(K)} \left|\critx^{\ell_0}(h\cdot\vec{n},h)\right| + \textbf{error}
        \end{equation*}
        where the error term arises from boxes whose projection does not lie entirely within $A$ and from boxes whose outer corner has $L^1$-norm greater than $Nh$, that being whose where $|\vec{n}|>N-E$. However, as $\meas(\partial A)=0$ and the number of edges $E$ is fixed, then these boundary boxes become negligible as $N\to\infty$. Thus for any $h>0$ and $\delta>0$ there exists $N'$ such that 
        \begin{equation*}
            \left|\critAl(Nh)\right| <(1+\delta)\sum_{K=0}^N \,\sum_{\vec{n}\in\Omega_A(K)} \left|\critx^{\ell_0}(h\cdot\vec{n},h)\right|
        \end{equation*}
        and
        \begin{equation*}
            \left|\critAl(Nh)\right| >(1-\delta)\sum_{K=0}^N \,\sum_{\vec{n}\in\Omega_A(K)} \left|\critx^{\ell_0}(h\cdot\vec{n},h)\right|
        \end{equation*}
        for all $N>N'$.
        
        A key element in the proof is the fact that generically the number of critical maps in a box depends only on the  $L^1$-norm of the corner of the box. Specifically, using \cite[Proposition~4.2]{ES23-commutator}, we have that:
        \begin{equation*}
            |\critx(\vec{L},h)|\sim \frac{2^{4\chi(X)}}{3^{3\chi(X)}}\cdot \pi^{\chi(X)}\cdot \frac{(e^h-1)^{-3\chi(X)}\cdot e^{||\vec{L}||}}{\operatorname{vol}(\Sigma)^{-\chi(X)}}
        \end{equation*}
        as $\min_{e\in\mathbf{edge(X)}}L_e\to\infty$, where $\chi(X)$ is the Euler-characteristic of the graph. Therefore there exists some $\ell_e>0$ depending only on $h$ and $\delta$ such that for any $\vec{L}\in \R_+^{\mathbf{edge}(X)}$ which is $\ell_e$-long we have
        \begin{equation*}
            |\critx(\vec{L},h)|< (1+\delta) \cdot c_{g,k}' \cdot(e^h-1)^{3k-3}\cdot e^{||\vec{L}||}
        \end{equation*}
        and 
        \begin{equation*}
            |\critx(\vec{L},h)|> (1-\delta) \cdot c_{g,k}'\cdot  (e^h-1)^{3k-3}\cdot e^{||\vec{L}||}
        \end{equation*}
        where to aid readability we denote:
        \begin{equation*}
            c'_{g,k}=\left(\frac{4}{3}\right)^{3-3k}\cdot (\pi^2(g-1))^{1-k} \, .
        \end{equation*}
        Lemma \ref{lem:shortnegl} tells us that the proportion of critical maps which are not $\ell_e$-long becomes negligible, thus for large enough $N$ we have that:
        \begin{equation*}
            \left|\critAl(Nh)\right| <(1+\delta)\cdot c_{g,k}'\cdot  (e^h-1)^{3k-3}\sum_{K=0}^N e^{Kh}\left|\Omega_A(K)\right|
        \end{equation*}
        and
        \begin{equation*}
            \left|\critAl(Nh)\right| >(1-\delta)\cdot c_{g,k}'\cdot  (e^h-1)^{3k-3}\sum_{K=0}^N e^{Kh}\left|\Omega_A(K)\right| .
        \end{equation*}
        Now using the definition of $\gmeasx$ we see that $\left|\Omega_A(K)\right|\sim \gmeasx(A) \left|\Omega(K)\right|$ as $K\to\infty$. We also have that $|\Omega(K)|\sim \frac{K^{3k-4}}{(3k-4)!}$ as $K\to\infty$. So for large enough $N$ we have that:
        \begin{equation*}
            \sum_{K=0}^N e^{Kh}\left|\Omega_A(K)\right|<(1+\delta) \frac{\gmeasx(A)}{(3k-4)!}\sum_{K=0}^N K^{3k-4}e^{Kh}
        \end{equation*}
        and 
        \begin{equation*}
            \sum_{K=0}^N e^{Kh}\left|\Omega_A(K)\right|>(1-\delta) \frac{\gmeasx(A)}{(3k-4)!}\sum_{K=0}^N K^{3k-4}e^{Kh} \, .
        \end{equation*}
        As $N$ grows we can approximate the sum in the above equations with an integral: 
        \begin{equation*}
            \begin{split}
                \sum_{K=0}^N K^{3k-4}e^{Kh}&=h^{3-3k}\sum_{K=0}^N (Kh)^{3k-4}e^{Kh}h \\
                &\sim h^{3-3k}\int_0^{Nh}x^{3k-4}e^x \, dx
            \end{split} 
        \end{equation*}
        and when $N\to\infty$ the integral value is asymptotic to $(Nh)^{3k-4}\cdot e^{Nh}$. Thus putting it all together we get:
        \begin{equation*}
            \left|\critAl(Nh)\right| <(1+\delta)\cdot \left(\frac{e^h-1}{h}\right)^{3k-3}\cdot \frac{c_{g,k}'\cdot (Nh)^{3k-4}\cdot e^{Nh}}{(3k-4)!}\cdot \gmeasx(A)
        \end{equation*}
        and 
        \begin{equation*}
            \left|\critAl(Nh)\right| >(1-\delta)\cdot \left(\frac{e^h-1}{h}\right)^{3k-3}\cdot \frac{c_{g,k}'\cdot (Nh)^{3k-4}\cdot e^{Nh}}{(3k-4)!}\cdot \gmeasx(A)
        \end{equation*}
        Combining Lemma \ref{lem:shortnegl} and Proposition \ref{prop:countcrit} we see that 
        \begin{equation*}
            |\critx^{\ell_0}(L)| \sim|\critx(L)| \sim \frac{c_{g,k}'\cdot L^{3k-4}\cdot e^{L}}{(3k-4)!} 
        \end{equation*}
        as $L\to\infty$, and thus for large enough $L$ we have:
        \begin{equation*}
            \begin{split}
                \gmeaslx(A)&=\frac{\left|\critAl(L)\right|}{|\critx^{\ell_0}(L)|} \\
                &< (1+\delta)\cdot \left(\frac{e^h-1}{h}\right)^{3k-3}\cdot \gmeasx(A)
            \end{split}
        \end{equation*}
        and 
        \begin{equation*}
            \gmeaslx(A)> (1-\delta)\cdot \left(\frac{e^h-1}{h}\right)^{3k-3}\cdot \gmeasx(A)
        \end{equation*}
        As we can take any value for $h$ and $\delta$ the result follows and we have proved the claim.
    \end{proof}
    
    Now from the definition of $\gmeasl$ we have that:
    \begin{equation*}
        \begin{split}
            \gmeasl(A)&=\frac{\left|\{\Delta\in\surfsubgrp^{\ell_0}(L): p(\Delta)\in A\}\right|}{\left|\surfsubgrp^{\ell_0}(L)\right|} \\
            &=\frac{1}{\left|\surfsubgrp^{\ell_0}(L)\right|}\sum_{X\in \typ}\left|\{\Delta\in\surfsubgrpx^{\ell_0}(L): p(\Delta)\in A\}\right| \\
            &=\frac{1}{\left|\surfsubgrp^{\ell_0}(L)\right|}\sum_{X\in \typ}\left|\surfsubgrpx^{\ell_0}(L)\right|\cdot \gmeaslx(A)
        \end{split}
    \end{equation*}
    for any $A\subset\gmoduli$ with $\meas(\partial A)=0$.
    Notice that Theorem \ref{thm:countingsubgrps} gives us that for any $X\in\typ$:
    \begin{equation*}
        \frac{\left|\surfsubgrpx^{\ell_0}(L)\right|}{\left|\surfsubgrp^{\ell_0}(L)\right|} \sim \frac{1}{|\gaut|\sum_{X' \in \typ}\frac{1}{|\operatorname{Aut}(X')|}}
    \end{equation*}
    as $L\to\infty$. Using this along with the result of the claim above we have that for any $\delta>0$ there exists some $L'>0$ such that for all $L>L'$
    \begin{equation*}
        \gmeasl(A)<(1+\delta)\frac{1}{\sum_{X \in \typ}\frac{1}{|\operatorname{Aut}(X)|}}\sum_{X\in\typ}\frac{1}{|\gtriv|}\sigma_X(A)
    \end{equation*}
    and 
    \begin{equation*}
        \gmeasl(A)>(1-\delta)\frac{1}{\sum_{X \in \typ}\frac{1}{|\operatorname{Aut}(X)|}}\sum_{X\in\typ}\frac{1}{|\gtriv|}\sigma_X(A)
    \end{equation*}
    Thus we have $\lim_{L\to\infty}\gmeasl(A)=\meas(A)$ and we are done.
\end{proof}

We now have what we need to prove our main result:
\begin{proof}[Proof of Theorem \ref{thm:limitmeasure}]
    As $\hsubgrps$ is locally compact then $\meas$ is the weak-*-limit of the family of measures $\measl$ if and only if we have
    \begin{equation*}
        \lim_{L\to\infty}\int f \,d\measl = \int f \,d\meas
    \end{equation*}
    for every continuous, compactly supported function $f\in C_c(\hsubgrps)$ \cite{ES-book}. 

    Proposition \ref{prop:limitmod} tells us that $\meas$ is the limit of the family of measures $\gmeasl$ on the moduli space $\gmoduli$. We can extend these measures to lie on the larger space $\hsubgrps\supset\gmoduli$ and the convergence still holds. Thus, fixing $\epsilon>0$, there exists some $L_0>0$ such that for all $L>L_0$:
    \begin{align*}
        \left| \int f \,d\measl - \int f \,d\meas\right| &\leq \left|\int f \,d\measl - \int f \,d\gmeasl \right|+ \left|\int f \,d\gmeasl- \int f \,d\meas \right| \\
        &< \left| \int f \,d\measl - \int f \,d\gmeasl \right| + \epsilon
    \end{align*}
    for all continuous, compactly supported functions $f\in C_c(\hsubgrps)$. 
    Thus it suffices to prove: 
    \begin{equation*}
        \left| \int f \,d\measl - \int f \,d\gmeasl \right| \to 0
    \end{equation*}
    as $L\to \infty$, for all such $f\in C_c(\hsubgrps)$. 

    Denote by $\surfsubgrp^{<\ell_0}(L):=\surfsubgrp(L)-\surfsubgrp^{\ell_0}(L)$ the subset of $\surfsubgrp(L)$ consisting of the subgroups which do not have an $\ell_0$-long carrier graph. From the definition of $\measl$ we can write:
    \begin{equation*}
        \begin{split}
            \int f \,d\measl&=\sum_{\Delta\in\surfsubgrp(L)}\frac{f(\Delta)}{|\surfsubgrp(L)|} \\
            &=\sum_{\Delta\in\surfsubgrp^{\ell_0}(L)}\frac{f(\Delta)}{|\surfsubgrp(L)|}+\sum_{\Delta\in\surfsubgrp^{<\ell_0}(L)}\frac{f(\Delta)}{|\surfsubgrp(L)|} \, .
        \end{split}
    \end{equation*}
    As $f$ is a compactly supported continuous function it is also bounded, thus there exists some $M>0$ such that $|f|<M$. Then using Lemma \ref{lem:shortnegl} there exists some $L'$ such that:
    \begin{equation*}
        \sum_{\Delta\in\surfsubgrp^{<\ell_0}(L)}\frac{|f(\Delta)|}{|\surfsubgrp(L)|} < \frac{M\cdot|\surfsubgrp^{<\ell_0}(L)|}{|\surfsubgrp(L)|} \, < \epsilon
    \end{equation*}
    for all $L>L'$.
    Applying the triangle inequality then gives:
    \begin{equation*}
        \begin{split}
            \left| \int f \,d\measl - \int f \,d\gmeasl \right|& =\left|\sum_{\Delta\in\surfsubgrp(L)}\frac{f(\Delta)}{|\surfsubgrp(L)|}-\sum_{\Delta\in\surfsubgrp^{\ell_0}(L)}\frac{f(p(\Delta))}{|\surfsubgrp^{\ell_0}(L)|}\right| \\
            & <\sum_{\Delta\in\surfsubgrp^{\ell_0}(L)}\left|\frac{f(\Delta)}{|\surfsubgrp(L)|}-\frac{f(p(\Delta))}{|\surfsubgrp^{\ell_0}(L)|}\right| + \epsilon
        \end{split}
    \end{equation*}
    for $L>L'$. We then again use Lemma \ref{lem:shortnegl} to get:
    \begin{equation*}
        \begin{split}
            \left|\frac{f(\Delta)}{|\surfsubgrp(L)|}-\frac{f(p(\Delta))}{|\surfsubgrp^{\ell_0}(L)|}\right|&=\left|\frac{|\surfsubgrp^{\ell_0}(L)|\cdot f(\Delta)-|\surfsubgrp(L)|\cdot f(p(\Delta))}{|\surfsubgrp(L)|\cdot |\surfsubgrp^{\ell_0}(L)|}\right| \\
            &\leq \frac{|\surfsubgrp^{\ell_0}(L)|\cdot |f(\Delta)-f(p(\Delta)|}{|\surfsubgrp(L)|\cdot |\surfsubgrp^{\ell_0}(L)|} + \frac{|\surfsubgrp^{<\ell_0}(L)|\cdot |f(p(\Delta))|}{|\surfsubgrp(L)|\cdot |\surfsubgrp^{\ell_0}(L)|} \\
            &< \frac{|f(\Delta)-f(p(\Delta)|}{|\surfsubgrp(L)|}+\frac{\epsilon}{|\surfsubgrp^{\ell_0}(L)|} 
        \end{split}
    \end{equation*}
    for any $\Delta\in\surfsubgrp^{\ell_0}(L)$ where $L>L'$. 
    
    \begin{claim}
        For any $f\in C_c(\hsubgrps)$ there exists some $L''>0$ such that 
        \begin{equation*}
            \left|f(\Delta) - f(p(\Delta))\right|<\epsilon
        \end{equation*}
        for all $\Delta\in\surfsubgrp^{\ell_0}$ with $\ell(\Delta)>L''$.
    \end{claim}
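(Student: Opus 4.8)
The plan is to establish the stronger, uniform statement that $\Delta$ and $p(\Delta)$ become arbitrarily close to one another in $\hsubgrps$ once $\ell(\Delta)$ is large enough, uniformly over all $\Delta$ whose image $p(\Delta)$ stays in a fixed compact subset of $\gmoduli$; the remaining subgroups -- those for which $p(\Delta)$ leaves every compact set of $\gmoduli$ -- will be dealt with by observing that then $\Delta$ must leave every compact subset of $\hsubgrps$ as well, so $f$ vanishes at both points. Since $\hsubgrps$ is second countable, locally compact and Hausdorff (Lemma~\ref{lem:localcompact}) it is metrizable; I would fix a metric $d$ on it and, using that $f\in C_c(\hsubgrps)$ is uniformly continuous, choose $\delta>0$ with $|f(x)-f(y)|<\epsilon$ whenever $d(x,y)<\delta$. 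Write $K=\operatorname{supp}f$.

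The first ingredient is a properness bound: on any compact subset of $\subgrps$ the translation lengths of a fixed basis of $\free$ are bounded, so the corresponding rose carrier graph, and hence the minimal carrier graph, has bounded length. Thus there is $L_1=L_1(K)$ with $\Delta\notin K$, and hence $f(\Delta)=0$, for every $\Delta$ with $\ell(\Delta)>L_1$. Since $p(\Delta)$ always lies in $\gmoduli$, we have $p(\Delta)\notin K$ exactly when $p(\Delta)\notin K':=K\cap\gmoduli$, which is compact.

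The heart of the argument is a uniform tracking estimate: for every compact $K'\subset\gmoduli$ there should be $L_2$ with $d(\Delta,p(\Delta))<\delta$ whenever $\ell(\Delta)>L_2$ and $p(\Delta)\in K'$. To prove this I would take the $\ell_0$-long, trivalent minimal carrier graph $(X,\phi)$ of such a $\Delta$, of total length $L=\ell(\Delta)$, so that $p(\Delta)$ is its volume one rescaling; compactness of $K'$ yields $s_0>0$ with $\ell_\Sigma(\phi(\gamma))\geq s_0L$ for every essential loop $\gamma$ in $X$. By the argument in the proof of Lemma~\ref{lem:longcritcarry}, the lift $\tilde\phi:\tilde X\to\hyp$ carries edge-paths to concatenations of geodesic segments of length $>\ell_0$ meeting at angle $\tfrac{2\pi}{3}$, and a hyperbolic law of cosines computation bounds the length defect at each such corner by a universal constant $c_0$. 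Hence, for any taut loop $\gamma$ in $X$ crossing $n(\gamma)$ edges,
\begin{equation*}
    \ell_\Sigma(\phi(\gamma))-c_0\,n(\gamma)\ \leq\ \ell_{\hyp}(\gamma)\ \leq\ \ell_\Sigma(\phi(\gamma)),
\end{equation*}
where $\ell_{\hyp}(\gamma)$ denotes the hyperbolic translation length of the conjugacy class of $\Delta$ determined by $\gamma$ (using that $(X,\phi)$ realises $\pi_1(X)$ as $\Delta$, so loops of $X$ correspond to conjugacy classes of $\Delta$). On the $s_0$-thick part of $\hreps$ the length function topology inherited from $\projc$ is, locally, controlled by the lengths of the finitely many conjugacy classes crossing at most $R$ edges of the underlying graph, for a suitable $R=R(\delta,s_0)$ \cite{CV86,Paul89}; I would fix such an $R$ together with a tolerance $\eta>0$ for which matching all these normalised lengths to within $\eta$ forces $\delta$-closeness in $\hreps$, hence in the quotient $\hsubgrps$. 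For $\gamma$ with $n(\gamma)\leq R$ the displayed inequality gives $|\ell_{\hyp}(\gamma)-\ell_\Sigma(\phi(\gamma))|\leq c_0R$, and since all lengths in play lie between $s_0L$ and $RL$, after projective normalisation the discrepancy is at most $c_0R/(s_0L)$; choosing $L_2$ with $c_0R/(s_0L_2)<\eta$ finishes this step.

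With these in hand I would set $L''=\max(L_1,L_2)$ and conclude: for $\Delta\in\surfsubgrp^{\ell_0}$ with $\ell(\Delta)>L''$, either $p(\Delta)\in K'$, in which case the tracking estimate gives $d(\Delta,p(\Delta))<\delta$ and hence $|f(\Delta)-f(p(\Delta))|<\epsilon$, or $p(\Delta)\notin K'$, in which case $f(p(\Delta))=0$ while the properness bound forces $f(\Delta)=0$, so the difference again vanishes. The main obstacle is the tracking estimate: one must make precise, and uniform over $K'$, the statement that on a compact (hence thick) part of $\gmoduli$ the topology is genuinely pinned down by the lengths of boundedly many bounded-complexity loops, and then transport this comparison coherently from $\hreps$ down to $\hsubgrps$; the per-corner defect computation is elementary but must likewise be made uniform in $\ell_0$.
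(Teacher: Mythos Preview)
Your proposal is correct and follows a genuinely different route from the paper. The paper lifts both $p$ and $f$ to $\hreps=\reps\cup\cv$ and argues softly via the Morgan--Shalen compactification: as $\ell(\rho)\to\infty$ the representations accumulate on $\R$-tree actions, the systole of $\Sigma$ is invoked to force these limits into $\cv$, and continuity of the lifted $f'$ then gives the estimate. You instead split on whether $p(\Delta)$ lies in $K'=\operatorname{supp}(f)\cap\gmoduli$, handle the off-$K'$ case by a properness bound on $\ell$, and on $K'$ give an explicit per-corner hyperbolic-trigonometry estimate comparing translation lengths of short loops to their carrier-graph lengths, reducing to a finite-coordinate comparison in $\projc$.

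What your approach buys is an explicit dichotomy the paper leaves implicit: when $p(\Delta)$ escapes every compact subset of $\gmoduli$ (e.g.\ one loop of the minimal carrier graph stays bounded while the total length grows), the projectivised limit of the length functions is \emph{not} a free simplicial action, so the paper's ``systole $\Rightarrow$ limit in $\cv$'' step does not literally apply without exactly your observation that then both $f(\Delta)$ and $f(p(\Delta))$ vanish. What the paper's approach buys is brevity: it sidesteps having to pin down, uniformly over $K'$, a finite set of test loops controlling the topology of $\hreps$ near the thick part of $\cv$ --- precisely the step you rightly flag as the main technical point. One simplification available to you: since your properness bound already gives $f(\Delta)=0$ for all $\ell(\Delta)>L_1$, the case split is only needed to control $f(p(\Delta))$.
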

    \begin{proof}[Proof of Claim]
        Recall that the space $\hsubgrps$ has a quotient topology coming from a properly discontinuous $\out$-action on the space $\hreps\subset\projc$ (see Section \ref{sec:spaces}). We can lift the projection $p:\surfsubgrp^{\ell_0}\rightarrow\gmoduli$ to a projection $p':\critreps^{\ell_0}\rightarrow\cv$ such that a representation $\rho:\free\rightarrow\fund$ is mapped by $p'$ to the projectivised marked metric graph corresponding to the image of the minimal length critical graph map which induces $\rho$.

        We can also lift the function $f$ to a continuous function $f'\in C(\hreps)$ equivariant under the action of $\out$. 

        We know that as the length of representations in $\reps$ go to infinity their length functions limit to very small, minimal, $F_k$-equivariant isometric actions on $\R$-trees \cite{MS84}. More importantly, the non-trivial translation lengths of any faithful representation $\rho:\free\rightarrow\fund$ are bounded below by the systole of the surface $\Sigma$. Therefore the representations $\critreps$ limit to free, discrete actions on simplicial trees, i.e. points in Outer space $\cv$. Thus very long elements of $\critreps$ are `close' to their projections and specifically the continuity of $f'$ means that for any $\epsilon$ there exists some $L'$ such that
    \begin{equation*}
        \left| f'(\rho) - f'(p'(\rho)) \right|<\epsilon    
    \end{equation*}
    for any $\rho\in\critreps^{\ell_0}$ induced by a minimal length critical graph of length at least $L'$.

    It follows that in the quotient there also exists $L'>0$ such that
    \begin{equation*}
        \left| f(\Delta) - f(p(\Delta))\right|<\epsilon    
    \end{equation*}
    for any $\Delta\in\surfsubgrp^{\ell_0}$ with $\ell(\Delta)\geq L'$. We have proved the claim.
    \end{proof}
    
    From the claim it follows that for $L>L''$:
    \begin{equation*}
        \begin{split}
            \sum_{\Delta\in\surfsubgrp^{\ell_0}(L)}\frac{|f(\Delta)-f(p(\Delta)|}{|\surfsubgrp(L)|}&<\sum_{\substack{\Delta\in\surfsubgrp^{\ell_0}(L) \\
            \ell(\Delta)>L''}}\frac{\epsilon}{|\surfsubgrp(L)|}+\sum_{\Delta\in\surfsubgrp^{\ell_0}(L'')}\frac{2M}{|\surfsubgrp(L)|} \\
            &\leq\epsilon+\frac{2M\cdot|\surfsubgrp(L'')|}{|\surfsubgrp(L)|}
        \end{split}
    \end{equation*}
    
    Putting all this together we see that for any $L>\max\{L',L''\}$ we get:
    \begin{equation*}
        \begin{split}
            \left| \int f \,d\measl - \int f \,d\gmeasl \right|& <\sum_{\Delta\in\surfsubgrp^{\ell_0}(L)}\left(\frac{|f(\Delta)-f(p(\Delta)|}{|\surfsubgrp(L)|}+\frac{\epsilon}{|\surfsubgrp^{\ell_0}(L)|}\right) + \epsilon \\
            & \leq \frac{2M\cdot|\surfsubgrp(L'')|}{|\surfsubgrp(L)|} + 3\epsilon
        \end{split}
    \end{equation*}
    We then note that $2M\cdot|\surfsubgrp(L'')|$ is constant but $|\surfsubgrp(L)|$ grows exponentially with $L$. Therefore for large enough $L$ the first term above is itself less than $\epsilon$ and thus for such an $L$:
    \begin{equation*}
        \left| \int f \,d\measl - \int f \,d\meas\right|<5\epsilon \, .
    \end{equation*}
    We are finished. 
\end{proof}

The limit probability measure $\meas$ can be used to consider random properties of conjugacy classes of subgroups of $\fund$ and this also allows us to consider random covers of the surface. We will look at such properties in the Section \ref{sec:expectedprop}. 

\section{Patterson-Sullivan analogue}\label{sec:patt-sull}
The counting measures $\measl$ on $\hsubgrps$ studied in the section above are constructed by taking a finite subset $\surfsubgrp(L)\subset\surfsubgrp$ and weighting points in this subset uniformly. An alternative to this is a measure supported on the whole of the discrete set $\surfsubgrp$ where points corresponding to longer subgroups are less heavily weighted. By suitably controlling the weightings we are able to ensure that the resulting measure is a probability measure. 
The measures we construct in this section can be thought of as an analogue of Patterson-Sullivan measures \cite{Pat76}, however we note that there is no group action. 

\begin{lemma}
    The series
    \begin{equation*}
        p_s:=\sum_{\Delta\in \surfsubgrp}e^{-s\ell(\Delta)} \, .
    \end{equation*}
    converges for any $s>1$ and diverges when $s\leq1$. Thus we say that $s=1$ is the critical exponent.
\end{lemma}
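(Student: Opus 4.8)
The plan is to read off both statements directly from the asymptotic count in Theorem \ref{thm:countingsubgrps}. Write $N(L):=|\surfsubgrp(L)|$ and set
\begin{equation*}
    c_{g,k}:=\sum_{X\in\typ}\frac{1}{|\gaut|}\cdot\left(\frac{4}{3}\right)^{3-3k}\cdot\frac{(\pi^2(g-1))^{1-k}}{(3k-4)!} ,
\end{equation*}
so that Theorem \ref{thm:countingsubgrps} reads $N(L)\sim c_{g,k}\,L^{3k-4}e^{L}$ as $L\to\infty$. The first step is to group the terms of $p_s$ by integer length windows: for $n\in\N$ let $B_n:=\{\Delta\in\surfsubgrp : \ell(\Delta)\in(n,n+1]\}$, so that $|B_n|=N(n+1)-N(n)$ and $\surfsubgrp=\bigsqcup_{n\geq 0}B_n$ (there are finitely many $\Delta$ with $\ell(\Delta)\leq\ell_0$ for any $\ell_0$, so the small-$n$ contribution is a harmless finite constant). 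From the asymptotic we get the two-sided estimate $N(n+1)-N(n)\sim c_{g,k}(e-1)\,n^{3k-4}e^{n}$ as $n\to\infty$; in particular there are constants $0<a\leq A<\infty$ and $n_0$ with $a\,n^{3k-4}e^{n}\leq |B_n|\leq A\,n^{3k-4}e^{n}$ for all $n\geq n_0$.

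Next, for each $n$ every $\Delta\in B_n$ contributes a term with $e^{-s(n+1)}\leq e^{-s\ell(\Delta)}\leq e^{-sn}$, hence
\begin{equation*}
    e^{-s(n+1)}|B_n|\ \leq\ \sum_{\Delta\in B_n}e^{-s\ell(\Delta)}\ \leq\ e^{-sn}|B_n| .
\end{equation*}
Combining this with the bounds on $|B_n|$, the tail $\sum_{n\geq n_0}\sum_{\Delta\in B_n}e^{-s\ell(\Delta)}$ is bounded above and below by constant multiples of the elementary series $\sum_{n\geq n_0}n^{3k-4}e^{(1-s)n}$. This series converges for $s>1$ (exponential decay dominates the polynomial factor) and diverges for $s<1$ (exponential growth); for $s=1$ it reduces to $\sum_n n^{3k-4}$, which diverges since $3k-4\geq 2$ for every $k\geq 2$. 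Therefore $p_s<\infty$ exactly when $s>1$ and $p_s=\infty$ when $s\leq 1$, so $s=1$ is the critical exponent.

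I expect no real obstacle here: the only point requiring mild care is that Theorem \ref{thm:countingsubgrps} is used as a genuinely two-sided estimate (both an upper and a lower bound on $N(L)$, hence on the block sizes $|B_n|$), and that the boundary case $s=1$ diverges because the polynomial exponent $3k-4$ is nonnegative — indeed at least $2$ — for all admissible $k$. One could equivalently phrase the argument via Abel summation, writing $p_s=s\int_0^\infty N(L)e^{-sL}\,dL$ up to boundary terms and noting $N(L)e^{-sL}\sim c_{g,k}L^{3k-4}e^{(1-s)L}$, but the dyadic blocking above avoids any Stieltjes bookkeeping.
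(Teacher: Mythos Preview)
Your argument is correct and essentially equivalent to the paper's, though packaged differently. The paper uses the Riemann--Stieltjes formulation you mention at the end: it writes the partial sums as $\int_0^L e^{-st}\,dN(t)=e^{-sL}N(L)+s\int_0^L e^{-st}N(t)\,dt$ via integration by parts, and then feeds in the two-sided bounds $(1\pm\delta)c_{g,k}t^{3k-4}e^t$ for $N(t)$ coming from Theorem \ref{thm:countingsubgrps}. Your integer-window blocking is the discrete analogue of exactly this computation and avoids the Stieltjes bookkeeping, at the cost of one extra justification: the intermediate asymptotic $N(n+1)-N(n)\sim c_{g,k}(e-1)n^{3k-4}e^n$ does \emph{not} follow formally from $N(L)\sim c_{g,k}L^{3k-4}e^L$ (asymptotics do not pass to differences in general). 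What you actually use, however, is only the two-sided bound $a\,n^{3k-4}e^n\leq |B_n|\leq A\,n^{3k-4}e^n$, and this does follow once you take $\epsilon$ small enough that $(1-\epsilon)e-(1+\epsilon)>0$ in the estimate $N(n+1)-N(n)\geq c_{g,k}n^{3k-4}e^n\bigl[(1-\epsilon)(1+1/n)^{3k-4}e-(1+\epsilon)\bigr]$. With that clarification your proof goes through; the paper's integral version sidesteps this step because it works with $N(t)$ directly rather than its increments.
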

\begin{proof}
    As the set $\surfsubgrp$ is infinite the series clearly diverges for any $s\leq 0$. Suppose then that $s>0$. 
    
    Denote by $N(t)=|\surfsubgrp(t)|$ the counting function for the number of elements of $\surfsubgrp$ with length at most $t\geq 0$. This is a step function and we can write the partial sums of the series as a Riemann–Stieltjes integral \cite{Stro20}:
    \begin{equation}\label{eq:ps-int}
    \begin{split}
        \sum_{\Delta\in \surfsubgrp(L)}e^{-s\ell(\Delta)}&=\int_0^L e^{-st}\, dN(t) \\
        &= e^{-sL}\cdot N(L)+ s\int_0^L e^{-st}\cdot N(t) \, dt 
    \end{split}
\end{equation}
    where we have used integration by parts to get the second equality.
    
    Theorem \ref{thm:countingsubgrps} tells us that for any $\delta>0$ there exists some $T_0>0$ such that 
    \begin{equation*}
        (1-\delta)\cdot c_{g,k}\cdot t^{3k-4}\cdot e^t<N(t)<(1+\delta)\cdot c_{g,k}\cdot t^{3k-4}\cdot e^t
    \end{equation*}
    for all $t>T_0$, where $c_{g,k}$ is given by: 
    \begin{equation*}
        c_{g,k}:=\sum_{X\in\typ}\frac{1}{\left|\gaut\right|} \cdot\left(\frac{4}{3}\right)^{3-3k}\cdot\frac{(\pi^2(g-1))^{1-k}}{(3k-4)!} \, .
    \end{equation*}
    Thus when $L>T_0$ we have that $e^{-sL}\cdot N(L)<(1+\delta)\cdot c_{g,k}\cdot L^{3k-4} \cdot e^{L(1-s)}$ and
    \begin{equation*}
        \begin{split}
            \int_0^L e^{-st}\cdot N(t) \, dt &= \int_0^{T_0} e^{-st}\cdot N(t) \, dt + \int_{T_0}^L e^{-st}\cdot N(t) \, dt \\
            &< \mathbf{const} + (1+\delta)\cdot c_{g,k}\int_{T_0}^L t^{3k-4}\cdot e^{t(1-s)} \, dt 
        \end{split}
    \end{equation*}
    where $\mathbf{const}$ is a constant depending only on $T_0$.
    
    When $s>1$ then $1-s$ is negative and therefore $L^{3k-4}\cdot e^{L(1-s)}\to 0$ as $L\to\infty$ and $t^{3k-4}\cdot e^{t(1-s)}$ is integrable on $[0,\infty)$. Thus:
    \begin{equation*}
        \begin{split}
            p_s&=\lim_{L\to\infty}\sum_{\Delta\in \surfsubgrp(L)}e^{-s\ell(\Delta)} \\
            &< s\cdot \mathbf{const} +s\cdot (1+\delta)\cdot c_{g,k}\int_{T_0}^\infty t^{3k-4}\cdot e^{t(1-s)} \, dt \\
            &<\infty
        \end{split}
    \end{equation*}
    and specifically $p_s$ converges.

    When $s\leq 1$ then 
    \begin{equation*}
        \begin{split}
            e^{-sL}\cdot N(L)&>(1-\delta)\cdot c_{g,k}\cdot L^{3k-4}\cdot e^{L(1-s)} \\
            &\geq (1-\delta)\cdot c_{g,k}\cdot L^{3k-4} \\
        \end{split}
    \end{equation*}
    and so the partial sums in (\ref{eq:ps-int}) tend to infinity as $L$ grows, thus $p_s$ diverges. 
\end{proof}

Now, for $s>1$ we can define a probability measure on $\hsubgrps$: 
\begin{equation*}
    \psmeas= \frac{1}{p_s}\sum_{\Delta\in\surfsubgrp}e^{-s\ell(\Delta)}\delta_\Delta
\end{equation*}
where $\delta_\Delta$ is the Dirac measure centred at $\Delta$.

\begin{thm:pattsulconverge}
    The family of measures $\psmeas$ on $\hsubgrps$ converge in the weak-*-topology to the limit measure $\meas$ as $s\downarrow1$.
\end{thm:pattsulconverge}
\begin{proof}
    Fix some $\epsilon>0$ and a continuous compactly supported function $f\in C_c(\hsubgrps)$. From Theorem \ref{thm:limitmeasure} we have that there exists some $L'>0$ such that 
    \begin{equation}\label{eq:ps-proof}
        \left|\int f \, d \,\measl - \int f \, d\meas \, \right| < \epsilon
    \end{equation}
    for any $L>L'$. 
    
    Denote $N_f(L):=\sum_{\Delta\in \surfsubgrp(L)} f(\Delta)$. Then using the Riemann-Stieltjes integral and integration by parts as above gives:
    \begin{equation*}
        \begin{split}
            \int f \, d\,\psmeas &= \frac{1}{p_s}\sum_{\Delta\in\surfsubgrp}e^{-s\ell(\Delta)}f(\Delta) \\
            &=\frac{1}{p_s}\int_0^\infty e^{-st} \, d N_f(t) \\
            &= \frac{s}{p_s}\int_{L'}^\infty e^{-st}\cdot N_f(t) \, dt +\frac{s}{p_s}\int_0^{L'}e^{-st}\cdot N_f(t) \, dt \, .
        \end{split}
    \end{equation*}
    Consider the contribution from the finite interval:
    \begin{equation*}
        \left|\frac{s}{p_s}\int_0^{L'}e^{-st}\cdot N_f(t) \, dt \right|< \frac{s}{p_s}\int_0^{L'}\left| e^{-t}\cdot N_f(t) \right| \, dt \, .
    \end{equation*}
    Noting that $p_s\to\infty$ as $s\downarrow 1$ and $\int_0^{L'}\left| e^{-t}\cdot N_f(t) \right| \, dt$ is a constant independent of $s$ we see there exists some $s'>1$ such that
    \begin{equation}\label{eq:ps1}
        \left|\frac{s}{p_s}\int_0^{L'}e^{-st}\cdot N_f(t) \, dt \right|< \epsilon
    \end{equation}
    for all $s\in (1,s')$.

    From the definition of $p_s$ and utilising the Riemann-Stieltjes integral as in (\ref{eq:ps-int}) above we have:
    \begin{equation*}
        p_s = s\int_0^{L'}e^{-st}\cdot N(t) \, dt + s\int_{L'}^\infty e^{-st}\cdot N(t) \, dt \, .
    \end{equation*}
    Then, denoting $\mathbf{F}:=\int f \, d\meas$ and noting that this is also a constant independent of $s$, we can find some $s''>0$ such that
    \begin{equation}\label{eq:ps2}
        \begin{split}
            \frac{1}{p_s}\left| \mathbf{F} \cdot s\int_0^{L'}e^{-st}\cdot N(t) \, dt \right| &< \frac{|\mathbf{F}|\cdot s}{p_s}\int_0^{L'}e^{-t}\cdot N(t) \, dt \\
            &< \epsilon
        \end{split}
    \end{equation}
    for all $s\in(1,s'')$.

    From the definition of $\measl$ we have that $N_f(L)=N(L)\cdot\int f \, d\measl$ and so (\ref{eq:ps-proof}) can be written as 
    \begin{equation}\label{eq:ps3}
        \left|N_f(t) - N(t)\cdot\mathbf{F}\right|<\epsilon \cdot N(t)
    \end{equation}
    for any $t>L'$.
    
    Putting together (\ref{eq:ps1}), (\ref{eq:ps2}), (\ref{eq:ps3}) and the triangle inequality we get:
    \begin{equation*}
        \begin{split}
            \left|\int f \, d \,\psmeas - \int f \, d\meas\right| &< \frac{1}{p_s} \left|s\int_{L'}^\infty e^{-st}\cdot N_f(t) \, dt - p_s \cdot \mathbf{F} \right| +\epsilon \\
            &<\frac{s}{p_s} \int_{L'}^\infty e^{-st}\cdot \left|N_f(t) - N(t)\cdot\mathbf{F}\right| \, dt  +2\epsilon \\
            &< \frac{\epsilon \cdot s}{p_s} \int_{L'}^\infty e^{-st} \cdot N(t) \, dt  +2\epsilon  \\
            &=\epsilon\cdot\frac{\int_{L'}^\infty e^{-st}\cdot N(t)  \, dt}{\int_0^\infty e^{-st}\cdot N(t)\, dt} + 2\epsilon\\
            &< 3\epsilon
        \end{split}
    \end{equation*}
    for any $s\in(1,\min\{s',s''\})$. We are done.
\end{proof}

\section{Expected properties of covers}\label{sec:expectedprop}
For an element $\Delta\in\surfsubgrp$ we will denote by $\cover=\hyp/\Delta$ the corresponding cover of $\Sigma$. 
Theorem \ref{thm:limitmeasure} allows us to determine asymptotically expected geometric and topological properties of a cover picked uniformly at random from the set 
\begin{equation*}
    \cov(L)=\{\cover \mid \Delta\in\surfsubgrp(L)\}
\end{equation*}
as $L\to\infty$. 
In this section we present three examples of properties we are able to study and give explicit calculations for $k=2$.

\subsection*{Systole of a random cover}
The systole of a surface is the length of its shortest non-trivial closed geodesic. To find the expected value of the systole of a random cover $\cover\in\cov$ we can calculate the expected systole of a rank $k$ metric graph using our probability measure on the moduli space $\gmoduli$. 

More explicitly, for any $\Delta\in\subgrps$ we denote by $\sys$ the systole of the surface $\cover=\hyp/\Delta$. Then for any $L>0$ let
\begin{equation*}
    \mathbb{E}_{\operatorname{sys}}(L):=\frac{1}{|\surfsubgrp(L)|}\sum_{\Delta\in\surfsubgrp(L)}\sys
\end{equation*}
be the expected value of the systole for the set $\cov(L)$.

\begin{prop}\label{prop:expsysbylimit}
    For any $X\in\gmoduli$ let $f_{\operatorname{sys}}(X)$ equal the systole of the metric graph $X$, then we have:
    \begin{equation*}
        \mathbb{E}_{\operatorname{sys}}(L) \sim L \cdot \int_{\gmoduli} f_{\operatorname{sys}} \, d\meas
    \end{equation*}
    as $L\to\infty$.
\end{prop}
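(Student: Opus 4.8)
The plan is to deduce the statement from the convergence $\gmeasl\to\meas$ established in Proposition~\ref{prop:limitmod}, the missing link being a geometric comparison between the hyperbolic systole of a cover $\cover$ and the combinatorial systole of a minimal carrier graph for $\Delta$. Accordingly I would fix $\ell_0$ large (at least as large as in Corollary~\ref{coroll:nummincar}), so that each $\Delta\in\surfsubgrp^{\ell_0}$ has a well-defined projection $p(\Delta)\in\gmoduli$ to the volume-one metric graph underlying all of its minimal carrier graphs, and note that $\ell(\Delta)\cdot f_{\operatorname{sys}}(p(\Delta))$ is then exactly the length of a shortest cycle of a minimal carrier graph $(X,\phi)$ of $\Delta$.

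The first and main step would be to prove a comparison estimate: after enlarging $\ell_0$ if necessary, for every $\epsilon>0$ one has $|\,\sys-\ell(\Delta)\, f_{\operatorname{sys}}(p(\Delta))\,|\le\epsilon\,\ell(\Delta)$ for all $\Delta\in\surfsubgrp^{\ell_0}$. The upper bound $\sys\le\ell(\Delta)f_{\operatorname{sys}}(p(\Delta))$ is immediate, since the image in $\cover$ of a shortest cycle of $(X,\phi)$ is freely homotopic to a closed geodesic of no greater length. For the lower bound I would take a systole $\gamma$ of $\cover$, represent its conjugacy class in $\Delta\cong\pi_1(X)$ by a cyclically reduced edge-loop $c$ of $X$, and observe that the lift of $\phi(c)$ to $\cover$ is a closed broken geodesic with segments of length $>\ell_0$ meeting at angles $\tfrac{2\pi}{3}$ and of total length equal to the graph-length of $c$, which is at least the systole of $(X,\phi)$. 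The point is then that such a broken geodesic is a quasi-geodesic whose multiplicative constant tends to $1$ as $\ell_0\to\infty$ --- the quantitative form of the Bridson--Haefliger estimate \cite{BH99} already used in Lemma~\ref{lem:longcritcarry}, and the same mechanism behind the claim in the proof of Theorem~\ref{thm:limitmeasure} that long elements of $\surfsubgrp$ lie close to their projections in $\gmoduli$ --- so that $\ell_\Sigma(\gamma)\ge(1-\epsilon)\,\ell(\Delta)f_{\operatorname{sys}}(p(\Delta))$ once $\ell_0$ is large. This is the hard part: it is the only place where hyperbolic geometry, as opposed to the equidistribution results already in hand, is needed, and making the quasi-geodesic constant explicit enough to control the efficiency loss over the boundedly many segments of $c$ is the technical crux.

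With the comparison in hand the rest is bookkeeping. I would record that $f_{\operatorname{sys}}$ is continuous on $\gmoduli$, bounded by $1$ (a volume-one metric graph has total edge length $1$), and vanishes at infinity, since a sequence in $\gmoduli$ leaves every compact set precisely when its shortest cycle degenerates; thus $f_{\operatorname{sys}}\in C_0(\gmoduli)$. Fixing $\epsilon>0$ and $\ell_0$ as above, I would split $\surfsubgrp(L)$ into the subgroups with no $\ell_0$-long minimal carrier graph, those in $\surfsubgrp^{\ell_0}(L)$ with $\ell(\Delta)<(1-\epsilon)L$, and the rest. The first family is a vanishing proportion of $\surfsubgrp(L)$ by Lemma~\ref{lem:shortnegl}, and the second by the exponential asymptotics of Theorem~\ref{thm:countingsubgrps} (the number of subgroups of length at most $(1-\epsilon)L$ is $O((1-\epsilon)^{3k-4}L^{3k-4}e^{(1-\epsilon)L})=o(|\surfsubgrp(L)|)$); since $\sys\le\ell(\Delta)\le L$, their contribution to $\mathbb{E}_{\operatorname{sys}}(L)$ is $o(L)$. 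For the remaining $\Delta$, which satisfy $\ell(\Delta)=L+O(\epsilon L)$, the comparison gives $\sys=L\,f_{\operatorname{sys}}(p(\Delta))+O(\epsilon L)$, so that
\begin{equation*}
    \mathbb{E}_{\operatorname{sys}}(L)=\frac{L}{|\surfsubgrp(L)|}\sum_{\Delta\in\surfsubgrp^{\ell_0}(L)}f_{\operatorname{sys}}(p(\Delta))+O(\epsilon L)+o(L),
\end{equation*}
and the sum equals $\tfrac{|\surfsubgrp^{\ell_0}(L)|}{|\surfsubgrp(L)|}\int_{\gmoduli}f_{\operatorname{sys}}\,d\gmeasl$, which tends to $\int_{\gmoduli}f_{\operatorname{sys}}\,d\meas$ by Lemma~\ref{lem:shortnegl} and Proposition~\ref{prop:limitmod} (applicable to $f_{\operatorname{sys}}$ because $f_{\operatorname{sys}}\in C_0(\gmoduli)$ and the measures are probability measures). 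Dividing by $L$, letting $L\to\infty$ and then $\epsilon\downarrow0$ yields $\mathbb{E}_{\operatorname{sys}}(L)/L\to\int_{\gmoduli}f_{\operatorname{sys}}\,d\meas$; since $\meas$ has full support in the Lebesgue class and $f_{\operatorname{sys}}$ is positive off the lower-dimensional strata, this limit is positive, giving the stated asymptotic.
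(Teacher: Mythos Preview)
Your proof is correct, but it works harder than the paper's and bypasses its main shortcut. The paper does not stay on $\gmoduli$ and appeal to Proposition~\ref{prop:limitmod}; instead it extends $f_{\operatorname{sys}}$ to a bounded continuous function on all of $\hsubgrps$ by setting $f_{\operatorname{sys}}(\Delta)=\sys/\ell(\Delta)$ for $\Delta\in\subgrps$, and then applies the weak-$*$ convergence $\measl\to\meas$ of Theorem~\ref{thm:limitmeasure} directly (legitimately, since all measures involved are probability measures). The replacement of $L$ by $\ell(\Delta)$ is handled exactly as you do, via the exponential growth from Theorem~\ref{thm:countingsubgrps}; but the geometric comparison you single out as the ``hard part'' --- that $\sys$ is close to $\ell(\Delta)\cdot f_{\operatorname{sys}}(p(\Delta))$ --- is not reproved: it is absorbed into the assertion that the extended $f_{\operatorname{sys}}$ is continuous across $\gmoduli$, which in turn is a special case of the claim already established inside the proof of Theorem~\ref{thm:limitmeasure}. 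In effect you are re-deriving, for this particular $f$, the content of that claim via the same quasi-geodesic mechanism. Your route has the virtue of making the hyperbolic geometry explicit and of only needing convergence on $\gmoduli$; the paper's route is shorter and shows how such expectation results follow formally from Theorem~\ref{thm:limitmeasure} once one identifies the right continuous extension.
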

\begin{proof}
We know from Theorem \ref{thm:countingsubgrps} that the number of elements $|\surfsubgrp(L)|$ grows exponentially in $L$. Thus for any $\epsilon$ we have that the proportion of elements of $|\surfsubgrp(L)|$ with lengths greater than $(1-\epsilon)L$ tends to 1 as $L\to\infty$. Therefore we have:
\begin{equation*}
    \begin{split}
        \frac{\mathbb{E}_{\operatorname{sys}}(L)}{L}&=\frac{1}{|\surfsubgrp(L)|}\sum_{\Delta\in\surfsubgrp(L)}\frac{\sys}{L} \\
        &\to\frac{1}{|\surfsubgrp(L)|}\sum_{\Delta\in\surfsubgrp(L)}\frac{\sys}{\ell(\Delta)}
    \end{split}
\end{equation*}
as $L\to\infty$.

The function $f_{\operatorname{sys}}$ on $\gmoduli$ sends a element $X\in\gmoduli$ to the systole of the metric graph $X$. We can extend this function continuously to the whole space $\hsubgrps$ where $f_{\operatorname{sys}}:\hsubgrps\rightarrow\R$ is defined by $f_{\operatorname{sys}}(\Delta)=\frac{\sys}{\ell(\Delta)}$ for $\Delta\in\subgrps$ where $\ell(\Delta)$, as always, is the length of some shortest carrier graph of $\Delta\in\subgrps$. 
From the definition of $\measl$ we have: 
\begin{equation*}
    \frac{1}{|\surfsubgrp(L)|}\sum_{\Delta\in\surfsubgrp(L)}\frac{\sys}{\ell(\Delta)}=\int_{\hsubgrps}f_{\operatorname{sys}} \, d\measl
\end{equation*}
for any $L>0$.
Then as $f_{\operatorname{sys}}$ is bounded, Theorem \ref{thm:limitmeasure} gives us:
\begin{equation*}
    \int_{\hsubgrps}f_{\operatorname{sys}} \, d\measl\to\int_{\hsubgrps}f_{\operatorname{sys}} \, d\meas
\end{equation*}
as $L\to \infty$.
\end{proof}

Using Proposition \ref{prop:expsysbylimit} allows us to make explicit calculations for the expected value of the systole. We do this here for $k=2$:

\begin{coroll}\label{coroll:systole}
     The expected value $\mathbb{E}_{\operatorname{sys}}(L)$ for the set of covers of the surface $\Sigma$ with fundamental group isomorphic to $F_2$ is asymptotic to $\frac{23}{90}L$ as $L\to\infty$.
\end{coroll}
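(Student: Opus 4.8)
The plan is to invoke Proposition \ref{prop:expsysbylimit}, which reduces the assertion to computing the single number $\int_{\gmoduli} f_{\operatorname{sys}}\,d\meas$ for $k=2$ and checking that it equals $\tfrac{23}{90}$. For $k=2$ there are exactly two topological types of trivalent rank-$2$ graphs: the \emph{theta graph} $\Theta$, two vertices joined by three parallel edges, and the \emph{dumbbell} $D$, two loops joined by a bridge; so $\typ=\{\Theta,D\}$ and each of the two top-dimensional pieces $S_\Theta,S_D\subset\gmoduli$ is the quotient by $\gaut$ of the standard $2$-simplex $\Delta_X=\{\vec x\in\R^3_{>0}:x_1+x_2+x_3=1\}$ of edge lengths. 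First I would record the relevant group orders. One computes $\operatorname{Aut}(\Theta)\cong S_3\times\Z/2$ (permute the three parallel edges; optionally swap the two vertices), so $|\operatorname{Aut}(\Theta)|=12$, and the vertex swap is the only nontrivial automorphism fixing all edge lengths, so $|\operatorname{Triv}(\Theta)|=2$. For the dumbbell an automorphism may independently reverse each loop and may swap the two loops (which forces reversal of the bridge), so $|\operatorname{Aut}(D)|=8$; the two loop reversals act trivially on edge lengths while the loop swap acts by the transposition $x_a\leftrightarrow x_b$, so $|\operatorname{Triv}(D)|=4$.

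Next I would unpack the explicit formula (\ref{eq:limitmeas}) for $\meas$ together with (\ref{def:sigma-meas}). The measure $\sigma_X$ is the $\gaut$-equidistributed Lebesgue-class measure on $S_X$ of total mass $|\operatorname{Triv}(X)|/|\gaut|$, hence it equals $\tfrac{|\operatorname{Triv}(X)|}{|\gaut|}$ times the pushforward to $S_X$ of the uniform probability measure on $\Delta_X$; consequently, for any $\gaut$-invariant $f$ (and $f_{\operatorname{sys}}$ is one),
\begin{equation*}
    \int_{S_X} f\,d\sigma_X=\frac{|\operatorname{Triv}(X)|}{|\gaut|}\,\langle f\rangle_{\Delta_X},\qquad\text{where }\langle f\rangle_{\Delta_X}\text{ is the average of }f\text{ over }\Delta_X.
\end{equation*}
Substituting into (\ref{eq:limitmeas}) cancels the $|\operatorname{Triv}|$ factors, and using $\tfrac{1}{12}+\tfrac{1}{8}=\tfrac{5}{24}$ we obtain
\begin{equation*}
    \int_{\gmoduli} f_{\operatorname{sys}}\,d\meas=\frac{1}{\sum_{X\in\typ}\tfrac{1}{|\gaut|}}\sum_{X\in\typ}\frac{1}{|\gaut|}\langle f_{\operatorname{sys}}\rangle_{\Delta_X}=\tfrac{2}{5}\,\langle f_{\operatorname{sys}}\rangle_{\Delta_\Theta}+\tfrac{3}{5}\,\langle f_{\operatorname{sys}}\rangle_{\Delta_D}.
\end{equation*}

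It remains to evaluate the two simplex averages. The systole of a metric graph is the length of its shortest essential cycle, so on $\Theta$ the cycles are the three pairs $x_i+x_j$ and $f_{\operatorname{sys}}=1-\max_i x_i$, while on $D$ the bridge lies on no cycle and $f_{\operatorname{sys}}=\min(x_a,x_b)$. For the uniform distribution on $\Delta_X$ one has $\Pr[x_i>t]=(1-t)^2$ and $\Pr[x_i>t,x_j>t]=(1-2t)^2$ for $t\le\tfrac12$, so $\langle\min(x_a,x_b)\rangle_{\Delta_D}=\int_0^{1/2}(1-2t)^2\,dt=\tfrac16$, and by inclusion--exclusion (with case splits at $t=\tfrac13$ and $t=\tfrac12$ recording when one, resp. two, coordinates may exceed $t$) $\langle\max_i x_i\rangle_{\Delta_\Theta}=\int_0^{1/3}1\,dt+\int_{1/3}^{1/2}\big(3(1-t)^2-3(1-2t)^2\big)\,dt+\int_{1/2}^{1}3(1-t)^2\,dt=\tfrac{11}{18}$, giving $\langle f_{\operatorname{sys}}\rangle_{\Delta_\Theta}=\tfrac{7}{18}$. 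Hence $\int_{\gmoduli} f_{\operatorname{sys}}\,d\meas=\tfrac25\cdot\tfrac{7}{18}+\tfrac35\cdot\tfrac16=\tfrac{14}{90}+\tfrac{9}{90}=\tfrac{23}{90}$, and Proposition \ref{prop:expsysbylimit} delivers $\mathbb{E}_{\operatorname{sys}}(L)\sim\tfrac{23}{90}L$. The one place demanding care is the bookkeeping of $|\gaut|$ and $|\operatorname{Triv}(X)|$, since these fix the relative weights $\tfrac25,\tfrac35$ of the two simplices in $\meas$; the inclusion--exclusion case split in the $\max$-average is the only other subtlety, and the rest is routine integration.
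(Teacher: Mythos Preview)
Your proof is correct and follows essentially the same approach as the paper: invoke Proposition \ref{prop:expsysbylimit}, identify $\typ=\{\Theta,D\}$, compute $|\operatorname{Aut}|$ and $|\operatorname{Triv}|$ for each, and then integrate $f_{\operatorname{sys}}$ against the explicit measure (\ref{eq:limitmeas}). The only difference is purely computational: the paper parametrises the quotient simplices $S_X$ directly and integrates over fundamental domains, whereas you pass to the full simplex $\Delta_X$ via the identity $\int_{S_X}f\,d\sigma_X=\tfrac{|\operatorname{Triv}(X)|}{|\gaut|}\langle f\rangle_{\Delta_X}$ and evaluate the averages using tail probabilities and inclusion--exclusion; both routes yield $\tfrac{7}{18}$ and $\tfrac{1}{6}$ for the two averages and combine with weights $\tfrac{2}{5},\tfrac{3}{5}$ to give $\tfrac{23}{90}$.
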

\begin{proof}
For rank $k=2$ there are only two topological types of trivalent graphs, these are the dumbbell graph $X_\text{db}$ (the two vertices each have a loop and the vertices are connected by 1 edge) and the theta graph $X_\theta$ (the two vertices are connected by 3 edges). See Figure \ref{fig:rank2graphs}.

\begin{figure}[ht]
    \centering
\begin{tikzpicture}[scale=1]
    \node at (0.75,-1) {$X_{\text{db}}$};
  \node[draw, circle, inner sep=1pt, fill] (v1) at (0,0) {};
  \node[draw, circle, inner sep=1pt, fill] (v2) at (1.5,0) {};
  \draw (-0.7,0) circle [radius=0.7];
  \node at (-1.5,0.5) {$e_1$};
  \draw (2.2,0)  circle [radius=0.7];
  \node at (3,0.5) {$e_2$};
  \draw (v1) -- (v2) node[midway,above] {$e_3$};

    \node at (5,-1) {$X_{\theta}$};
  \node[draw, circle, inner sep=1pt, fill] (v3) at (5,0) {};
  \node[draw, circle, inner sep=1pt, fill] (v4) at (7.2,0) {};
  \draw (v3) -- (v4) node[midway,above] {$e_2$};
  \path (v3) edge[out=80,in=100, looseness=1.6] node[midway,above] {$e_1$} (v4);
  \path (v3) edge[out=-80,in=-100, looseness=1.6] node[midway,above] {$e_3$} (v4);
\end{tikzpicture}
\caption{The two rank-2 trivalent graphs: the dumbbell graph and the theta graph.}
    \label{fig:rank2graphs}
\end{figure}
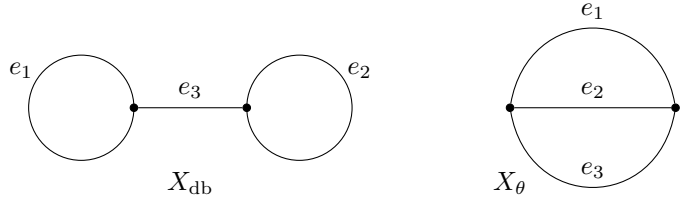
The dumbbell graph has a total of 8 different automorphisms $|\operatorname{Aut}(X_{\text{db}})|=8$. Four of these act trivially on all edges lengths, thus taking $\gtriv$ as defined in (\ref{def:triv}) above we get $|\operatorname{Triv}(X_{\text{db}})|=4$.
The theta graph has a total of 12 different automorphisms $|\operatorname{Aut}(X_{\theta})|=12$ and two of these act trivially $|\operatorname{Triv}(X_{\theta})|=2$.

Therefore, with $\sigma_X$ as in (\ref{def:sigma-meas}), the limit measure for $k=2$ is:
\begin{align*}
    \mathbf{m}_{2} &= \frac{1}{\sum_{X\in\operatorname{Typ}_{2}}\frac{1}{\gaut}} \cdot \sum_{X\in\operatorname{Typ}_{2}}\frac{1}{|\gtriv|}\sigma_{X} \\
    &= \frac{24}{5}\left(\frac{1}{4}\cdot\sigma_{X_{\text{db}}}+\frac{1}{2}\cdot\sigma_{X_{\theta}}\right) \\
    &=\frac{6}{5}\cdot\sigma_{X_{\text{db}}}+\frac{12}{5}\cdot\sigma_{X_{\theta}}
\end{align*}

The systole of a dumbbell graph is the length of the shortest edge making a loop. So, taking $e_1$, $e_2$ and $e_3$ to be the lengths of the edges as labelled in Figure \ref{fig:rank2graphs} we have the systole is equal to 
\begin{equation*}
    \operatorname{sys}(X_{\text{db}})=\min\{e_1,e_2\} \, .
\end{equation*}
However, note that there is a non-trivial automorphism of the dumbbell which flips the graph swapping the edges $e_1$ and $e_2$. Thus in moduli space the metric dumbbell graph $X$ with edge lengths $(e_1,e_2,e_3)=(x,y,z)$ is equivalent to the metric dumbbell graph $X'$ with edge lengths $(e_1',e_2',e_3')=(y,x,z)$. This means that any point in moduli space corresponding to a dumbbell graph can be uniquely determined by a triple $(x,y,z)$ with $x\geq y$.
Given that the graphs in moduli space have volume one, we can parametrise the metric dumbbell graphs in moduli space by $\{x,y\in\R_+:x+y<1,x\geq y\}$ and we note that here $f_{\operatorname{sys}}=y$.
Recalling the definition of the $\sigma_X$ measure from (\ref{def:sigma-meas}) we then get: 
\begin{equation*}
    \begin{split}
        \int_{\hat{\mathcal{G}_k}} f_{\operatorname{sys}} \, d \sigma_{X_{\text{db}}} &=2\int_0^{\frac{1}{2}}\int_y^{1-y} y \, dx\, dy \\
        &=2\int_0^{\frac{1}{2}} y(1-2y)\, dy \\
        &=\frac{1}{12} \, .
    \end{split}
\end{equation*}

The systole of a theta graph is the sum of the lengths of the two shortest edges. So, again with labelling from Figure \ref{fig:rank2graphs} we have that the systole is equal to 
\begin{equation*}
    \operatorname{sys}(X_\theta)=\min\{e_1+e_2,e_1+e_3,e_2+e_3\} \, .
\end{equation*}
The theta graph has automorphisms which permute the edges of the graph. Thus a metric theta graph $X$ with edge lengths $(e_1,e_2,e_3)=(x,y,z)$ is equivalent in moduli space to any metric theta graph $X'$ with the same set of edge lengths $\{e_1',e_2',e_3'\}=\{x,y,z\}$ and we can uniquely determine a point in moduli space corresponding to a theta graph with a triple $(x,y,z)$ with $z\geq x\geq y$.
We can therefore parametrise the metric theta graphs in moduli space by $\{x,y\in\R_+: y\leq x\leq 1-x-y\}$ and here $f_{\operatorname{sys}}=x+y$. We then get:
\begin{equation*}
    \begin{split}
        \int_{\hat{\mathcal{G}_k}} f_{\operatorname{sys}} \, d \sigma_{X_{\theta}} &=2\int_0^{\frac{1}{3}}\int_y^{\frac{1-y}{2}} x+y \, dx\, dy \\
        &=2\int_0^{\frac{1}{3}} \frac{1}{8}+\frac{1}{4}y -\frac{15}{8}y^2 \, dy \\
        &=\frac{7}{108} 
    \end{split}
\end{equation*}
Putting this all together then gives us:
\begin{equation*}
    \begin{split}
        \mathbb{E}_{\operatorname{sys}}(L)&\sim L\int_{\hat{\mathcal{G}_{2}}}f_{\operatorname{sys}}\, d\mathbf{m}_{2} \\
        &= \frac{6}{5}L\int_{S_{X_\text{db}}} f_{\operatorname{sys}} \, d\sigma_{X_\text{db}} + \frac{12}{5}L\int_{S_{X_\theta}} f_{\operatorname{sys}} \, d\sigma_{X_\theta} \\
        &=\frac{6}{5}\cdot \frac{1}{12} L + \frac{12}{5}\cdot \frac{7}{108} L \\
        &=\frac{23}{90}L
    \end{split}
\end{equation*}
as $L\to\infty$.
\end{proof}
Note that Theorem \ref{thm:pattsulconverge} tells us that if we instead take the expected value to be defined by $\psmeas$ then we get the same asymptotic value as above when $s\downarrow1$. 

\subsection*{Separating orthogeodesics on random covers}
On hyperbolic surfaces much work has been done on studying the properties of random closed geodesics. 
Notably, a celebrated result of Mirzakhani \cite{Mirz-simple} gives that on a closed genus $2$ hyperbolic surface the probability that a simple closed geodesic is separating is $\frac{1}{49}$.

Here, rather than study random closed curves we investigate orthogeodesics on the convex core of random covers. Recall that an orthogeodesic on a hyperbolic surface with geodesic boundary is a geodesic arc joining boundary components which meet the boundary perpendicularly. An orthogeodesic is separating if it splits the surface into two connected components.

Here investigate the probability that the convex core of a random cover of a surface has a separating orthogeodesic of bounded length. As mentioned above we have that in the rank $k=2$ case there are only two types of trivalent graphs: the dumbbell graph and the theta graph. We note that the dumbbell graph has a separating edge but the theta graph does not. Therefore we may expect that covers coming from the dumbbell are easier to separate. This is the case, in fact generically a cover corresponding to a dumbbell graph will have a `short' separating orthogeodesic whereas a cover corresponding to a theta graph will not. We make this explicit here:

\begin{coroll}\label{coroll:orthgeo}
    For any $\lambda>0$, the probability that the convex core of a cover $\cover\in\mathbf{Cov}_{\Sigma,2}(L)$ has a separating orthogeodesic of length less than $\lambda$ tends to $\frac{3}{5}$ as $L\to\infty$.
\end{coroll}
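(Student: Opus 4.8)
The plan is to carry the equidistribution of Theorems~\ref{thm:countingsubgrps} and~\ref{thm:limitmeasure} through to the level of convex cores and reduce everything to two geometric facts, one about pairs of pants and one about one-holed tori. As in the proof of Corollary~\ref{coroll:systole}, for $k=2$ there are exactly two topological types of trivalent graph, the dumbbell $X_{\text{db}}$ (with $|\operatorname{Aut}(X_{\text{db}})|=8$) and the theta $X_\theta$ (with $|\operatorname{Aut}(X_\theta)|=12$). A short ribbon-structure computation shows that the dumbbell carries a unique orientable fatgraph structure, realising a genus-$0$ surface, so the convex core of a cover whose (generic, unique) minimal carrier graph is the dumbbell is always a pair of pants; the theta carries two fatgraph structures, one giving a pair of pants and one a one-holed torus. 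By Theorem~\ref{thm:countingsubgrps} the proportion of $\Delta\in\mathbf{G}_{\Sigma,2}(L)$ with dumbbell minimal carrier graph tends to $\tfrac{1/8}{1/8+1/12}=\tfrac35$, and by Theorem~\ref{thm:limitmeasure} the normalised edge-length vectors of both families equidistribute in the simplices $S_{X_{\text{db}}}$ and $S_{X_\theta}$ according to $\sigma_{X_{\text{db}}}$ and $\sigma_{X_\theta}$. Since covers without a unique $\ell_0$-long minimal carrier are negligible (Lemma~\ref{lem:shortnegl}), the corollary reduces to: (A) the proportion of dumbbell covers in $\mathbf{Cov}_{\Sigma,2}(L)$ with a separating orthogeodesic of length $<\lambda$ tends to $1$; and (B) the proportion of theta covers with a separating orthogeodesic of length $<\lambda$ tends to $0$.

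For (A), recall the bridge edge $e_3$ of the dumbbell is a separating edge of the spine, so in the convex core $N=\Sigma_\Delta$ both sides of $\phi(e_3)$ lie on the same (``outer'') boundary cuff, and the class of an arc crossing $\phi(e_3)$ once is essential, separating, and represented by an orthogeodesic $\gamma$ perpendicular to that cuff at both ends. Gauss--Bonnet gives $\operatorname{area}(N)=-2\pi\chi(N)=2\pi$, while $\phi(e_3)$ has length $x_3 L$ with $x_3>0$; comparing, at each point of $\phi(e_3)$, the length of the shortest arc of $N$ crossing $\phi(e_3)$ there with the area of the resulting embedded strip forces some such arc, hence $\gamma$, to have length at most $2\pi/(x_3 L)$. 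Thus $\gamma$ has length $<\lambda$ outside the set of dumbbell covers with $x_3\,\ell(\Delta)\le 2\pi/\lambda$, whose proportion tends to $0$ because $\ell(\Delta)\to\infty$ while $x_3$ equidistributes; this is (A).

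For (B), a one-holed torus has no essential separating simple arc, hence (a short orthogeodesic being simple) no short separating orthogeodesic; so suppose the theta cover is a pair of pants $N'$ with cuffs $B^{(1)},B^{(2)},B^{(3)}$, each freely homotopic to one of the three two-edge loops of the critical spine. A separating orthogeodesic $\gamma$ joins one cuff, say $B^{(3)}$, to itself, and cutting $N'$ along it yields two annuli containing $B^{(1)}$ and $B^{(2)}$ (neither piece is a disc, as a geodesic bigon with two right angles has negative area). Homotoping the two cutting circles onto the geodesics $B^{(1)},B^{(2)}$ yields $2\,\ell(\gamma)+\ell(B^{(3)})\ge \ell(B^{(1)})+\ell(B^{(2)})$. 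A concatenation of $\ell_0$-long geodesic segments meeting at angle $\tfrac{2\pi}{3}$ is a $(\kappa,C)$-quasigeodesic with $\kappa=\kappa(\ell_0)\to1$ as $\ell_0\to\infty$ (the estimate behind Lemma~\ref{lem:longcritcarry}), so $\ell(B^{(i)})$ is at least $\kappa$ times the length of its spine loop, minus $C$; writing $e_a$ for the edge common to the two cuffs separated by $\gamma$ and bounding $\ell(B^{(3)})$ above by its spine loop, one obtains $2\,\ell(\gamma)\ge\bigl(2\kappa x_a-(1-\kappa)(1-x_a)\bigr)L-O(1)$, which tends to $+\infty$ once $x_a>\tfrac{1-\kappa}{1+\kappa}$. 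Minimising over the three possible shared edges, every separating orthogeodesic of $N'$ exceeds $\lambda$ for $L$ large whenever $\min_i x_i>\tfrac{1-\kappa}{1+\kappa}=:\theta_0(\ell_0)$.

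To finish, fix $\varepsilon>0$; since $\theta_0(\ell_0)\to0$ and $\sigma_{X_\theta}$ has no atoms we may choose $\ell_0$ so large that $\tfrac{12}{5}\,\sigma_{X_\theta}\bigl(\{\min_i x_i\le\theta_0(\ell_0)\}\bigr)<\varepsilon$. Then, for $L$ large, the proportion of theta covers with a separating orthogeodesic of length $<\lambda$ is at most the proportion whose spine has an edge of relative length $\le\theta_0(\ell_0)$, which by Theorem~\ref{thm:limitmeasure} tends to $\tfrac{12}{5}\,\sigma_{X_\theta}(\{\min_i x_i\le\theta_0(\ell_0)\})<\varepsilon$. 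Together with (A), this gives that the probability in question has $\liminf\ge\tfrac35$ and $\limsup\le\tfrac35+\varepsilon$ as $L\to\infty$; letting $\varepsilon\to0$ completes the proof. The main obstacle is step (B) --- the cutting inequality combined with the quasigeodesic lower bound on cuff lengths, together with the point that letting $\ell_0\to\infty$ drives $\kappa\to1$ and thereby shrinks the exceptional family of ``skewed'' theta graphs to $\sigma_{X_\theta}$-measure zero; a secondary technical issue is making the area/width estimate in (A) fully rigorous, since the near-perpendicular crossing arcs need not stay embedded in the convex core.
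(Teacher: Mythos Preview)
Your argument is correct and follows the same overall two-case split as the paper: show that dumbbell covers generically have a short separating orthogeodesic, that theta covers generically do not, and then quote the $3/5$ dumbbell proportion from Theorem~\ref{thm:countingsubgrps}. The implementation of both cases, however, differs from the paper's.

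For (A), the paper identifies the separating orthogeodesic crossing the bridge edge and uses the right-angled hexagon relations to show its length decays (exponentially) as the long cuff grows; your Gauss--Bonnet width estimate is a legitimate alternative giving the cruder bound $O(1/L)$, which is all that is needed. The embeddedness issue you flag can be handled by working in a half-collar of the long cuff rather than a strip about $\phi(e_3)$, but the paper's hexagon computation avoids this altogether.

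For (B), the paper simply asserts that a separating orthogeodesic in a theta pair of pants ``must follow along an edge of the graph'' and hence has length at least the shortest edge, then invokes Lemma~\ref{lem:shortnegl}. Your cutting inequality $2\ell(\gamma)+\ell(B^{(3)})\ge\ell(B^{(1)})+\ell(B^{(2)})$ together with the quasigeodesic comparison of cuff lengths to spine-loop lengths actually \emph{proves} a quantitative version of that assertion, at the cost of introducing the parameter $\kappa(\ell_0)$ and then needing equidistribution (Theorem~\ref{thm:limitmeasure}) rather than just Lemma~\ref{lem:shortnegl} to control the exceptional set $\{\min_i x_i\le\theta_0(\ell_0)\}$. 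So your route is longer but more self-contained on this point; the paper's is shorter but leaves the lower bound on $\ell(\gamma)$ as a claim.
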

\begin{proof}
    The convex core of a cover $\cover\in\mathbf{Cov}_{\Sigma,2}(L)$ with a dumbbell minimal length carrier graph is a pair of pants with geodesic boundary. Using the labelling from Figure \ref{fig:rank2graphs}, we see that two of the boundary components correspond to the two loops formed by $e_1$ and $e_2$, the third boundary component corresponds to the loop travelling around the whole graph: $e_1+e_3+e_2+e_3$. As $L\to\infty$ the length of the boundary components tends towards the length of the respective loops in the graph. Thus this third boundary component has length approximately $L+e_3$. There is a orthogeodesic perpendicular to the separating edge $e_3$ which joins this long boundary component to itself and separates the surface, see Figure \ref{fig:orthogeodesic}. Given two more orthogeodesics, specifically the non-separating orthogeodesics perpendicular to the edge loops $e_1$ and $e_2$, we can split the pair of pants into two hyperbolic hexagons. Then using right-angled hexagon formulas \cite{Bus92} we can show that the length of the separating orthogeodesic decreases exponentially as the length of the outer boundary component grows. Thus for any $\lambda>0$ we can find some $L'$ such that all covers corresponding to a minimal length dumbbell carrier graph of length at least $L'$ have a separating orthogeodesic of length less than $\lambda$.
\begin{figure}[ht]
    \centering
    \includegraphics[width=0.7\linewidth]{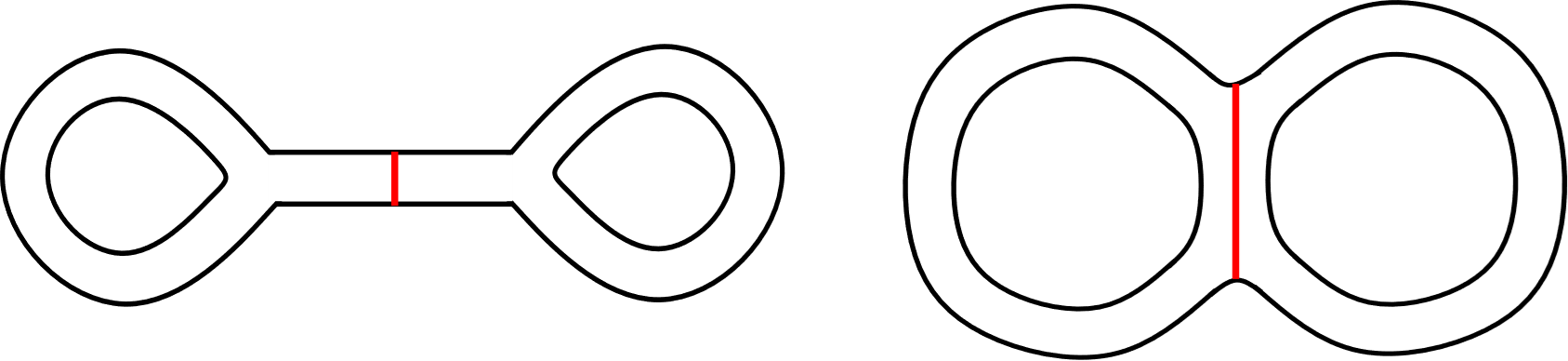}
    \caption{Pair of pants from a dumbbell graph and a theta graph with shortest separating orthogeodesic shown in red}
    \label{fig:orthogeodesic}
\end{figure}
    
    A cover corresponding to a theta graph can either be a pair of pants or a one-holed torus. If the cover is a one-holed torus then it does not have any separating orthogeodesics. If the cover is a pair of pants then any separating orthogeodesic must follow along an edge of the graph and thus the length of a separating orthogeodesic can be bounded below by the length of the shortest edge. Lemma \ref{lem:shortnegl} tells us that for any $\lambda>0$ the proportion of elements of $\mathbf{G}_{\Sigma,2}(L)$ which are not $\lambda$-long tends to $0$ as $L\to\infty$. So generically a pair of pants corresponding to a theta graph will not have a separating orthogeodesic of length less that $\lambda$.

    Therefore the probability that a cover $\cover\in\mathbf{Cov}_{\Sigma,2}(L)$ has a separating orthogeodesic of length less than $\lambda$ tends to the probability that a element $\Delta\in\mathbf{G}_{\Sigma,2}(L)$ has a dumbbell minimal carrier graph. 
    We can use Theorem \ref{thm:countingsubgrps} to calculate asymptotically the proportion of elements of $\surfsubgrp(L)$ with a dumbbell minimal carrier graph:
    \begin{equation}\label{eq:propotiondb}
        \begin{split}
            \frac{|\mathbf{G}_{X_{\text{db}},\Sigma}(L)|}{|\mathbf{G}_{\Sigma,2}(L)|} 
            &\sim\frac{1}{|\operatorname{Aut}(X_\text{db})|}\cdot\frac{1}{\frac{1}{|\operatorname{Aut}(X_\text{db})|}+\frac{1}{|\operatorname{Aut}(X_\theta)|}}\\
            &=\frac{3}{5}
        \end{split}
    \end{equation}
    as $L\to\infty$, where we have used that $|\operatorname{Aut}(X_\text{db})|=8$ and $|\operatorname{Aut}(X_\theta)|=12$.
\end{proof}

\subsection*{Topological types of random covers}
Another property that is interesting to study is the expected topological types of the covers of a surface. 

In the $k=2$ case, there are two possible topological types of covers: the one-holed torus (a genus 1 surface with 1 boundary component) and the pair of pants (a genus 0 surface with 3 boundary component). See Figure \ref{fig:rank2surfaces}.
\begin{figure}[ht]
    \centering
    \includegraphics[width=0.7\linewidth]{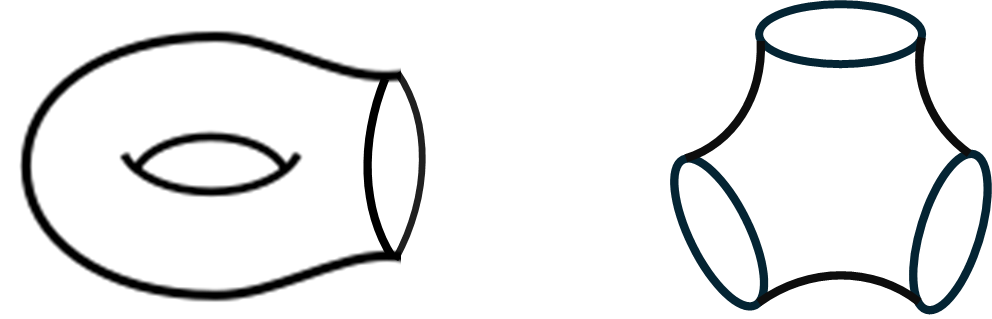}
    \caption{The two rank-2 surfaces: the one-holed torus and the pair of pants.}
    \label{fig:rank2surfaces}
\end{figure}

\begin{coroll}\label{coroll:toptype}
    Asymptotically, the probability that a cover $\cover\in\mathbf{Cov}_{\Sigma,2}(L)$ is a pair of pants is greater than or equal to $\frac{3}{5}$ as $L\to\infty$.
\end{coroll}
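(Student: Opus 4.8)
The plan is to exploit the dichotomy of rank $2$ trivalent graphs: a cover whose minimal length carrier graph is the dumbbell $X_{\text{db}}$ is always a pair of pants, whereas a cover whose minimal carrier graph is the theta graph $X_{\theta}$ may be either a pair of pants or a one-holed torus. This asymmetry gives a lower bound for the probability of being a pair of pants in terms of the (computable) probability of having a dumbbell minimal carrier graph, and is precisely why we obtain an inequality rather than an equality.

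First I would recall the observation made in the proof of Corollary \ref{coroll:orthgeo}: the convex core of a cover whose minimal length carrier graph is the dumbbell is a pair of pants. Consequently the set of $\Delta\in\mathbf{G}_{\Sigma,2}(L)$ for which $\cover$ is a pair of pants contains the set of those $\Delta$ that possess a dumbbell minimal carrier graph.

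Next I would reduce to the generic situation. By Corollary \ref{coroll:nummincar} there is $\ell_0=\ell_0(\Sigma,2)$ such that every $\Delta$ with an $\ell_0$-long minimal carrier graph has all of its minimal carrier graphs of one and the same topological type, differing only by graph automorphisms; and by Lemma \ref{lem:shortnegl} the proportion of $\Delta\in\mathbf{G}_{\Sigma,2}(L)$ whose minimal carrier graph fails to be $\ell_0$-long tends to $0$ as $L\to\infty$. Thus, outside a negligible subset, each $\Delta\in\mathbf{G}_{\Sigma,2}(L)$ is unambiguously labelled by the topological type — dumbbell or theta — of its minimal carrier graph. Equation (\ref{eq:propotiondb}) from the proof of Corollary \ref{coroll:orthgeo}, a direct consequence of Theorem \ref{thm:countingsubgrps}, then gives that the proportion of $\Delta\in\mathbf{G}_{\Sigma,2}(L)$ carrying a dumbbell minimal carrier graph tends to
\begin{equation*}
    \frac{1/|\operatorname{Aut}(X_{\text{db}})|}{1/|\operatorname{Aut}(X_{\text{db}})|+1/|\operatorname{Aut}(X_{\theta})|}=\frac{1/8}{1/8+1/12}=\frac{3}{5}
\end{equation*}
as $L\to\infty$. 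Combining the inclusion above with this count yields $\liminf_{L\to\infty}\frac{|\{\Delta\in\mathbf{G}_{\Sigma,2}(L):\cover\text{ is a pair of pants}\}|}{|\mathbf{G}_{\Sigma,2}(L)|}\geq\frac{3}{5}$, which is the claim.

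The argument uses only results already established, so there is no serious obstacle; the one point requiring care is the genericity step, namely confirming that the subgroups with short or non-unique minimal carrier graphs form a set of vanishing proportion, so that the count of Theorem \ref{thm:countingsubgrps} cleanly separates the two topological types. The reason the bound is not an equality is that we do not here determine what fraction of theta-graph covers are pairs of pants rather than one-holed tori; carrying out that finer analysis, and thereby obtaining the exact limiting probability, is deferred to the forthcoming work \cite{Wri}.
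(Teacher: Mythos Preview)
Your proposal is correct and follows essentially the same approach as the paper: both argue that dumbbell carrier graphs always yield pairs of pants while theta graphs may give either topological type, and then invoke the computation \eqref{eq:propotiondb} from Theorem \ref{thm:countingsubgrps} to obtain the lower bound $\tfrac{3}{5}$. Your version is slightly more explicit about the genericity step (invoking Corollary \ref{coroll:nummincar} and Lemma \ref{lem:shortnegl}), but this is implicit in the paper's argument as well.
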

\begin{proof}
    Recall again that there are two types of trivalent rank $2$ graphs: the dumbbell graph $X_\text{db}$ and the theta graph $X_\theta$. Any thickening of the dumbbell graph will give a pair of pants whereas a thickening of theta graph can be either a pair of pants or a one-holed torus. Thus the probability that a cover $\cover$ of an element $\Delta\in\mathbf{G}_{\Sigma,2}(L)$ is a pair of pants is bounded below by the probability that a minimal carrier graph of $\Delta$ is a dumbbell graph.

    The calculation  (\ref{eq:propotiondb}) above gives us that asymptotically this is equal to $\frac{3}{5}$ as $L\to\infty$
\end{proof}

As mentioned in the introduction, a forthcoming paper by the author \cite{Wri} studies in more detail the topological properties of random covers. This allows us to make precise calculations for the probability of topological types of random covers and give an exact value for Corollary \ref{coroll:toptype}. 

\printbibliography

@article{Best88,
  title={Degenerations of the hyperbolic space},
  author={Mladen Bestvina},
  journal={Duke Mathematical Journal},
  year={1988},
  volume={56},
  pages={143-161},
}

@book{BH99,
    author = {Martin R. Bridson and Andr{é} H{ä}fliger},
    title = {Metric Spaces of Non-Positive Curvature},
    publisher = {Springer Berlin / Heidelberg},
    year = {1999}
}

@book{Bus92,
  title={Geometry and Spectra of Compact Riemann Surfaces},
  author={Peter Buser},
    publisher = {Birkh{ä}user},
  year={1992},
}

@article{CM87,
author = {Culler, Marc and Morgan, John W.},
title = {Group Actions On $\mathbb{R}$-Trees},
journal = {Proceedings of the London Mathematical Society},
volume = {s3-55},
number = {3},
pages = {571-604},
year = {1987}
}

@article{CV86,
  title={Moduli of graphs and automorphisms of free groups},
  author={Marc Culler and Karen Vogtmann},
  journal={Inventiones mathematicae},
  year={1986},
  volume={84},
  pages={91-119},
}

@article{Del42,
    author = "Jean Delsarte",
    title = "Sur le gitter fuchsien",
    journal = "C. R. Acad. Sci. Paris",
    volume = "214",
    year = "1942",
    pages = "147-179"
}

@article{ES23-commutator,
title = "Counting geodesics of given commutator length",
abstract = "Let Σ be a closed hyperbolic surface. We study, for fixed g, the asymptotics of the number of those periodic geodesics in Σ having at most length L and which can be written as the product of g commutators. The basic idea is to reduce these results to being able to count critical realizations of trivalent graphs in Σ. In the appendix, we use the same strategy to give a proof of Huber{\textquoteright}s geometric prime number theorem.",
author = "Viveka Erlandsson and Juan Souto",
year = "2023",
day = "15",
volume = "11",
journal = "Forum of Mathematics, Sigma",
publisher = "Cambridge University Press",
}

@book{ES-book,
title = "{M}irzakhani's Curve Counting and Geodesic Currents",
author = "Viveka Erlandsson and Juan Souto",
year = "2022",
volume = "345",
series = "Progress in Mathematics",
publisher = "Springer",
}

@article{Hub61,
author = {H. Huber},
journal = {Mathematische Annalen},
pages = {385-398},
title = {Zur analytischen {T}heorie hyperbolischer {R}aumformen und {B}ewegungsgruppen {II}},
volume = {142},
year = {1961},
}

@article{Mirz-simple,
 author = {Maryam Mirzakhani},
 journal = {Annals of Mathematics},
 number = {1},
 pages = {97--125},
 publisher = {Annals of Mathematics},
 title = {Growth of the Number of Simple Closed Geodesies on Hyperbolic Surfaces},
 volume = {168},
 year = {2008}
}

@misc{Mirz2016,
      title={Counting Mapping Class group orbits on hyperbolic surfaces}, 
      author={Maryam Mirzakhani},
      year={2016},
      eprint={1601.03342},
      archivePrefix={arXiv},
}

@article{MS84,
author = {John W. Morgan and Peter B. Shalen},
journal = {Annals of Mathematics},
number = {3},
pages = {401--476},
publisher = {[Annals of Mathematics, Trustees of Princeton University on Behalf of the Annals of Mathematics, Mathematics Department, Princeton University]},
title = {Valuations, Trees, and Degenerations of Hyperbolic Structures, {I}},
volume = {120},
year = {1984}
}

@article{Pat76,
    author = {Patterson, S. J.},
    title = {The limit set of a {F}uchsian group},
    journal = {Acta Mathematica},
    year = {1976},
    volume = {136},
    pages = {241 -- 273}
}

@article{Paul89,
title = {The {G}romov topology on $\mathbb{R}$-trees},
journal = {Topology and its Applications},
volume = {32},
number = {3},
pages = {197-221},
year = {1989},
author = {Frédéric Paulin},
}

@article{Sas25,
    author = {Dounnu Sasaki},
    title = {Counting subgroups via {M}irzakhani’s curve counting},
    journal = {Groups, Geometry and Dynamics},
    year = {2025}
}

@book{Stro20,
  title={Essentials of Integration Theory for Analysis},
  author={Daniel W. Stroock},
  series={Graduate Texts in Mathematics},
  year={2020},
  publisher={Springer},
}

@article {Vogt15,
AUTHOR = {Vogtmann, Karen},
TITLE = {On the geometry of outer space},
JOURNAL = {American Mathematical Society. Bulletin. New Series},
VOLUME = {52},
YEAR = {2015},
NUMBER = {1},
PAGES = {27--46},
}

@article{White02,
author = {White, ME},
year = {2002},
pages = {377-395},
title = {Injectivity radius and fundamental groups of hyperbolic 3-manifolds},
volume = {10},
journal = {Communications in Analysis and Geometry},
}

@unpublished{Wri,
    author = {Sophie Wright},
    title = {Topology of random covers of surfaces},
    note = {in preparation}
}

@misc{hyptiling,
author = "Manuel Schrauth and Yanick Thurn and Florian Goth and Jefferson S.E. Portela and Dietmar Herdt and Felix Dusel",
  title = {The hypertiling project},
  url = {https://doi.org/10.5281/zenodo.7559393},
  date = {2023}
}

\end{document}